\newtheorem{thm}{Theorem}[section]
\newtheorem*{thm*}{Theorem}
\newtheorem{lem}[thm]{Lemma}       
\newtheorem{prop}[thm]{Proposition}
\newtheorem{cor}[thm]{Corollary}
\newtheorem{rmk}[thm]{Remark}
\newtheorem{definition}[thm]{Definition}
\DeclareMathOperator{\loc}{loc}
\DeclareMathOperator{\di}{div}
\newcommand{\RR}{\mathbb{R}}     
\newcommand{\NN}{\mathbb{N}}
\newcommand{\Hh}{\mathcal{H}} 
\newcommand{\J}{\mathcal{J}}
\newcommand{\e}{\varepsilon}
\numberwithin{equation}{section}
\begin{document}


\title[]{On a class of thin obstacle-type problems for the bi-Laplacian operator}
\author[] {Donatella Danielli$^\dag$} 
\address{$^\dag$School of Mathematics and Statistical Sciences, Arizona State University, Tempe, AZ 85287}
\email {ddanielli@asu.edu} 

\author[] {Giovanni Gravina$^{\ddag,*}$} 
\address{$^\ddag$Department of Mathematics and Statistics, Loyola University Chicago, Chicago, IL 60660}
\email {ggravina@luc.edu} 
\thanks{$^*$ Corresponding author}
\thanks{\emph{Authors ORCID IDs.} Donatella Danielli: \href{https://orcid.org/0000-0002-8659-6671}{0000-0002-8659-6671}, Giovanni Gravina: \href{https://orcid.org/0000-0001-8985-7964}{0000-0001-8985-7964}.}

\keywords{Monotonicity formulas, free boundary problems, blow-up analysis}
\subjclass[2020]{35R35, 35J35, 35J58}
\date{\today}

\begin{abstract}
This paper investigates the regularity of solutions and structural properties of the free boundary for a class of fourth-order elliptic problems with Neumann-type boundary conditions. The singular and degenerate elliptic operators studied naturally emerge from the extension procedure for higher-order fractional powers of the Laplacian, while the choice of non-linearity considered encompasses two-phase boundary obstacle problems as a special case. After establishing local regularity properties of solutions, Almgren- and Monneau-type monotonicity formulas are derived and utilized to carry out a blow-up analysis and prove a stratification result for the free boundary.
\end{abstract}

\maketitle
\tableofcontents


\section{Introduction}
In this paper, we study the regularity of solutions and some structural properties of the (thin) free boundary for weak solutions of the fourth-order boundary value problem
\begin{equation}
\label{bil}
\left\{
\arraycolsep=1.4pt\def\arraystretch{1.6}
\begin{array}{rll}
\Delta_b^2 u = & 0 & \text{ in } B_1^+, \\
u = & g & \text{ on } S_1^+, \\
\partial_{\nu} u = & \partial_{\nu} g & \text{ on } S_1^+, \\
\partial_y^b u = & 0 & \text{ on } B_1', \\
\partial_y^b \Delta_b u = & f(\cdot, u) &  \text{ on } B_1',
\end{array}
\right.
\end{equation}
where $b \in (-1, 1)$, $\Delta_b$ denotes the elliptic operator
\begin{equation}\label{b_lap}
\Delta_b w \coloneqq y^{-b} \di(y^b \nabla w) = \frac{b}{y} w_y + \Delta w,
\end{equation}
and 
\begin{equation}\label{b_normal}
\partial_y^b w(x, 0) \coloneqq \lim_{y \to 0^+} y^b \partial_y w(x, y)
\end{equation}
is the conormal \emph{interior} derivative. Here $B_r \coloneqq \{X = (x, y) : x \in \RR^n, y \in \RR, \text{ and } |X| < r\}$. Unless otherwise stated, throughout the paper we assume that $n \ge 2$. Moreover, we set $B_r^+ \coloneqq B_r \cap \{y > 0\}$ and use $S_r^+$ and $B_r'$ to denote the lateral and the flat portions of $\partial B_r^+$, respectively. We mention here that $g$ is a given, sufficiently regular boundary datum; the precise assumptions on $g$ are given below in \eqref{H1}, and are motivated by the existence theory for weak solutions to \eqref{bil} (see \Cref{MIN}). Due its technical nature, the definition of weak solutions is given below in \Cref{ws-def} (see also \Cref{weakLP}). Finally, the nonlinearity $f$ is assumed to satisfy regularity and growth conditions that are compatible with the choice 
\begin{equation}
\label{f-def}
f(x, u) \coloneqq \lambda_{-}(u^-)^{p - 1} - \lambda_{+}(u^+)^{p - 1},
\end{equation}
where $\lambda_{\pm} > 0$, $p \ge 1$, and $u^+ \coloneqq \max\{ u, 0 \}$ and $u^- \coloneqq \max \{-u, 0 \}$ denote the positive part and the negative part of $u$, respectively. In particular, a class of two-phase thin obstacle problems can be seen as particular cases of our analysis. This motivates and justifies the assumptions on $f$ that we make throughout the paper (see, for example, \eqref{H2} and \eqref{H3}).

\subsection{Motivation and previous works}
In the seminal paper \cite{MR2354493}, Caffarelli and Silvestre gave a characterization of the fractional Laplacian 
\[
(-\Delta)^s u(x) \coloneqq C(n, s) \operatorname{P.V.} \int_{\RR^n} \frac{u(x) - u(z)}{|x - z|^{n + 2s}}\,dz, \qquad s \in (0, 1),
\]
as the Dirichlet-to-Neumann map for a suitable extension of $u$ in the upper half-space $\RR^{n + 1}_+$. One of the key insights used to establish this result is the equivalence between the $H^s$-seminorm and the Dirichlet energy for the weighted Sobolev space $H^1(\RR^{n + 1}_+; y^{b})$, $b = 1 - 2s$, thus justifying the interest in recent years in the study of elliptic problems driven by the operator $\Delta_b$. For a comprehensive treatment of  the fractional Laplacian see, for example, \cite{MR3967804}, \cite{MR2944369}, \cite{MR4769823}, \cite{MR3916700}, \cite{MR4567945}, and the references therein. While the Caffarelli-Silvestre extension has been employed in a variety of settings, we highlight here some applications that are more directly related to the scope of our paper. A common feature of several of the works described below is the use of Almgren- and Monneau-type monotonicity formulas, made possible by the local character of the extended problem. For instance, in \cite{MR3169789}, Almgren's frequency formula was applied to characterize the asymptotic behavior of solutions to a fractional Laplace equation with a Hardy-type potential, and to establish the strong unique continuation property. In the context of free boundary problems involving the fractional Laplacian, this approach was introduced in the foundational works \cite{MR2405165}, \cite{MR2367025}, and \cite{MR2511747}; see also  \cite{MR2677613}, where the extension was used to study a one-phase free boundary problem for  $(-\Delta)^s$. In \cite{MR2995409},  the authors investigated a two-phase problem with a thin free boundary, which can be viewed as a localized version of the classical two-phase free boundary problem for the half-Laplacian $(-\Delta)^{1/2}$ (i.e., $b = 0$). This result was later extended to all $s \in (0, 1)$ (i.e., $b \in (-1,1)$) in \cite{MR3039830}. Moreover, in \cite{MR3348118},   the non-local analogue of the classical two-phase obstacle problem (or two-phase membrane problem) was studied.

We conclude this brief and by no means exhaustive overview of second-order, extended problems by mentioning the work \cite{DO23}, where Almgren- and Monneau-type monotonicity formulas, together with a blow-up analysis, were employed to provide a classification of the possible vanishing orders for weak solutions to 
\begin{equation}
\label{DO-pb}
\left\{
\arraycolsep=1.4pt\def\arraystretch{1.6}
\begin{array}{rll}
\Delta_b u = & 0 & \text{ in } B_1^+, \\
\partial_y^b u = & \lambda_{+}(u^+)^{p - 1} - \lambda_{-}(u^-)^{p - 1} &  \text{ on } B_1'.
\end{array}
\right.
\end{equation}
Moreover, they proved a stratification result for the nodal set, thus extending previous results of the first author and Jain for the case $b = 0$ (see \cite{MR4144102}). 

Observe that when $f$ is given as in \eqref{f-def}, \eqref{bil} can be interpreted as the fourth-order counterpart to \eqref{DO-pb}. Moreover, \eqref{bil} continues to bear a connection with a lower-dimensional, fractional problem in view of Yang's generalization of the Caffarelli-Silvestre extension for general non-integer orders of the fractional Laplacians (see \cite{yang2013}, and also \cite{MR4429579}). To be precise, when $1 < s < 2$, Yang's result reads as follows.
\begin{thm}[Theorem 3.1 in \cite{yang2013}] 
\label{extension}
For $1 < s < 2$, set $b \coloneqq 3 - 2s$ and let $u \in H^2 (\mathbb{R}^{n+1}_+; y^b)$ be a solution of
\begin{equation*}
\left\{
\begin{array}{rll}
 \Delta_b^2 u = & 0 & \text { in } \mathbb{R}^{n + 1}_+, \\
u = & u_0 & \text { on } \mathbb{R}^n \times \{0\}, \\
\partial_y^b u = & 0 & \text { on } \mathbb{R}^n \times \{0\}, \\
\end{array}
\right.
\end{equation*}
where $u_0 \in H^{s}(\mathbb{R}^n)$. Then we have that
\begin{equation*}
(-\Delta)^{s} u_0(x) = C(n, b) \partial_y^b \Delta_b u(x,0).
\end{equation*}
\end{thm}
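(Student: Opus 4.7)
My approach is to pass to the tangential Fourier transform in $x$ and exploit a scaling argument that directly exposes the fractional Laplacian symbol $|\xi|^{2s}$. Let $V(\xi, y) \coloneqq \widehat{u}(\xi, y)$. The equation $\Delta_b^2 u = 0$ becomes $L_\xi^2 V = 0$, where $L_\xi \coloneqq \partial_y^2 + (b/y)\partial_y - |\xi|^2$, with boundary conditions $V(\xi, 0) = \widehat{u_0}(\xi)$ and $\lim_{y\to 0^+} y^b \partial_y V(\xi, y) = 0$. The rescaling $z \coloneqq |\xi| y$ converts $L_\xi$ into $|\xi|^2 \tilde L$, where $\tilde L \coloneqq \partial_z^2 + (b/z)\partial_z - 1$ is independent of $\xi$. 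Consequently, $V(\xi, y) = \widehat{u_0}(\xi)\,\Phi(|\xi| y)$ for a universal profile $\Phi$ characterized by $\tilde L^2 \Phi = 0$ on $(0, \infty)$, $\Phi(0) = 1$, $\lim_{z\to 0^+} z^b \Phi'(z) = 0$, and appropriate decay at infinity (coming from the requirement $u \in H^2(\RR^{n+1}_+; y^b)$).

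Next, I would construct $\Phi$ explicitly. Setting $\sigma \coloneqq (1-b)/2 = s - 1$, the kernel of $\tilde L$ is spanned by the modified-Bessel-type functions $z^\sigma K_\sigma(z)$ and $z^\sigma I_\sigma(z)$. A direct computation yields $\tilde L[z\, f'(z)] = 2 f(z)$ whenever $\tilde L f = 0$, providing two additional fundamental solutions of $\tilde L^2$. The weighted integrability condition rules out the exponentially growing $I_\sigma$-modes, leaving a two-dimensional bounded space spanned by $z^\sigma K_\sigma(z)$ and $z^{\sigma+1} K_{\sigma-1}(z)$; the two conditions at $z = 0$ then pin down $\Phi$ uniquely via the asymptotics $K_\nu(z) \sim \frac{\Gamma(|\nu|)}{2}(z/2)^{-|\nu|}$ as $z \to 0^+$.

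The conclusion then follows from a chain-rule computation: $L_\xi V = |\xi|^2 \widehat{u_0}(\xi)\,(\tilde L \Phi)(|\xi| y)$, so that
\[
\lim_{y\to 0^+} y^b \partial_y [L_\xi V] = |\xi|^{3-b} \widehat{u_0}(\xi) \cdot c_b, \qquad c_b \coloneqq \lim_{z\to 0^+} z^b (\tilde L \Phi)'(z).
\]
Since $b = 3 - 2s$, one has $|\xi|^{3-b} = |\xi|^{2s}$ --- precisely the Fourier symbol of $(-\Delta)^s$ --- and Fourier inversion delivers the claimed identity with $C(n, b) = c_b^{-1}$. The main technical obstacle is confirming $c_b \neq 0$: since $\tilde L \Phi \in \ker \tilde L$ and is bounded, it must be a multiple of $z^\sigma K_\sigma(z)$, with the multiplicative constant determined by the $z^{\sigma+1} K_{\sigma-1}(z)$-component of $\Phi$. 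The condition $\lim_{z\to 0^+} z^b \Phi'(z) = 0$ combined with $\Phi(0) = 1$ forces this component to be nonzero, yielding $c_b \neq 0$.
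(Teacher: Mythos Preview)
The paper does not prove this statement: it is quoted as ``Theorem 3.1 in \cite{yang2013}'' and serves only as background motivation for the higher-order extension problem. There is therefore no proof in the paper to compare your attempt against.

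That said, your Fourier-transform-plus-scaling approach is the standard one for such extension results and is essentially how the cited reference proceeds. Your computations check out: the substitution $f = z^\sigma g$ with $\sigma = (1-b)/2$ does reduce $\tilde L f = 0$ to the modified Bessel equation of order $\sigma$; the identity $\tilde L[z f'] = 2f$ for $f \in \ker \tilde L$ is correct and supplies the remaining two fundamental solutions of $\tilde L^2$; and the small-$z$ asymptotics of $K_\nu$ do uniquely determine the coefficients of $\Phi$ from the two boundary conditions, forcing the $z^{\sigma+1}K_{\sigma-1}$-component to be nonzero and hence $c_b \neq 0$. One small point worth tightening: both decaying basis functions $z^\sigma K_\sigma$ and $z^{\sigma+1}K_{\sigma-1}$ individually have $\lim_{z\to 0^+} z^b (\cdot)' \neq 0$ (each contributes a nonzero constant, since $b + 2\sigma - 1 = 0$), so the vanishing of $\lim_{z\to 0^+} z^b \Phi'$ genuinely requires a nontrivial linear combination --- you assert this but it would be worth making the cancellation explicit.
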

The higher-order extension in \Cref{extension} has sparked significant interest and has led to numerous investigations of related problems in recent years. For example, in \cite{MR4169657}, Felli and Ferrero reformulated the corresponding extended problem as a system of two second-order equations and proved that solutions satisfy the strong unique continuation property, as well as the unique continuation from sets of positive measure. This was achieved by establishing an Almgren-type monotonicity formula and through a careful classification of blow-up profiles. A similar analysis was carried out by the same authors in \cite{FF20} for $b = 0$ (that is, for the fractional Laplacian of order $3/2$), where the unique continuation property and the classification of blow-up profiles were shown to hold in the presence of a linear term. In the extended formulation, this corresponds to the choice $f(x, u) \coloneqq h(x)u(x, 0)$, where $h$ is a given function in $C^1(B_1')$. A more detailed discussion of the techniques used in \cite{FF20}, along with a comparison to the strategy developed in this paper for addressing a Neumann coupling term of the form  \eqref{f-def}, is provided in \Cref{AFF-sec}. We also mention \cite{MR4316741}, where the first author and Haj Ali studied the regularity of solutions to \eqref{bil} with $f$ as in \eqref{f-def} and $b = 0$. Additionally, they obtained results on the structure of the free boundary 

\[ 
F(u) \coloneqq (\partial \{ u > 0\} \cup \partial \{ u < 0\}) \cap B_1'
\] 
for $p = 2$ and $p \ge 3$. Since a more in-depth comparison of our results with \cite{MR4316741} involves technical details, the discussion is postponed to \Cref{reg-v-rmk} and \Cref{DHA-comparison}. 

For more details on the (thick) obstacle problem for the bi-Laplacian, we refer the reader to the works of Frehse (see \cite{MR330754} and \cite{MR324208}), Caffarelli and Friedman (see \cite{MR529478}), and Aleksanyan (see \cite{MR4029734}). The regularity of solutions to the thin obstacle problem driven by $\Delta^2$ was explored by Schild (see \cite{MR880399} and \cite{MR752581}). More recently,  versions of the Alt-Caffarelli energy for the bi-harmonic operator were investigated in  \cite{MR4229229},  \cite{MR4026596}, \cite{MR4454382}, and \cite{MR4816111}.

\subsection{Statement of the main results}
Our first main result concerns the regularity of weak solutions to \eqref{bil}. A key step in our proof is an application of a Moser-Trudinger iterative scheme for a reformulation of \eqref{bil} as a system of second-order equations for $u$ and $v \coloneqq \Delta_b u$ (see \Cref{v-solves}). This allows us to show an improvement in integrability for $u, v$, and their traces on $B_R'$ (see \Cref{Lq-reg}), and the result is then combined (when $b \neq 0$) with elliptic regularity theory for operators with singular or degenerate weights in the Muckenhoupt class $A_2$  to establish the regularity of solutions, as stated in the following theorem.

\begin{thm} 
\label{main-reg}
Let $f \colon B_1' \times \RR \to \RR$ be a Carath\'eodory function and assume that there exists a constant $C$ such that  
\[
|f(x, \zeta)| \le C(1 + |\zeta|^{p - 1})
\] 
for $\mathcal{L}^{n}$-a.e.\@ $x \in B_1'$ and all $\zeta \in \RR$, where $p \in [1, p^{\#})$ and $p^{\#} > 2$ is an explicitly computable constant that depends only on $n$ and $b$ $($see \eqref{H4} for the precise definition of $p^{\#}$$)$. Then, if $u$ is a weak solution to \eqref{bil} in the sense of \Cref{ws-def} and $v \coloneqq \Delta_b u$, for all $r < 1$ we have that:
\begin{itemize}
\item[$(i)$] If $b \ge 0$, then $(u,v) \in C^{2, \alpha}(B_r^+\cup B'_r) \times C^{0, \alpha}(B_r^+ \cup B'_r)$ for all $\alpha \in (0, 1 - b)$; 
\item[$(ii)$] If $b < 0$, then $(u,v) \in C^{3, \alpha}(B_r^+\cup B'_r) \times C^{1, \alpha}(B_r^+\cup B'_r)$ for all $\alpha \in (0, - b)$.
\end{itemize}
Additionally, assume that $f$ is locally Lipschitz-continuous in the sense that for all $M > 0$ there exists a constant $C$ such that the inequality
\[
|f(x_1, \zeta_1) - f(x_2, \zeta_2)| \le C (|x_1 - x_2| + |\zeta_1 - \zeta_2|)
\]
holds for all $x_i \in B_1'$ and all $\zeta_i \in \RR$ with $|\zeta_i| \le M$, $i = 1, 2$. Then, for all $r < 1$ we have that: 
\begin{itemize}
\item[$(i)$] If $b \ge 0$, then $\nabla_x' v \in C^{0, \alpha}(B_r^+\cup B'_r)$ for all $\alpha \in (0, 1 - b)$; 
\item[$(ii)$] If $b < 0$, then $\nabla_x' v \in C^{1, \alpha}(B_r^+\cup B'_r)$ for all $\alpha \in (0, - b)$.
\end{itemize}
Here $\nabla_x'$ is used to denote the horizontal components of the gradient. 

Finally, for all $b \in (-1, 1)$, there exists $\beta \in (0, 1)$ such that $\partial_y^b v \in C^{0, \beta}(B_r^+\cup B'_r)$. In particular, if $b = 0$ we have that $(u,v) \in C^{3, \beta}(B_r^+\cup B'_r) \times C^{1, \beta}(B_r^+\cup B'_r)$ for some $\beta \in (0, 1)$.
\end{thm}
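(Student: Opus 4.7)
The overall plan is to reformulate \eqref{bil} as the coupled second-order system
\begin{equation*}
\Delta_b u = v, \quad \Delta_b v = 0 \text{ in } B_1^+, \qquad \partial_y^b u = 0, \quad \partial_y^b v = f(\cdot, u) \text{ on } B_1',
\end{equation*}
where $v \coloneqq \Delta_b u$, with the lateral data prescribed by $g$. From \Cref{v-solves} the function $v$ is itself a weak solution of a second-order weighted problem with Neumann datum $f(\cdot, u)$, so the task reduces to weighted Schauder and boundary regularity theory, provided one first establishes pointwise bounds on $u$ and $v$.

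My first step would be to upgrade integrability via a Moser--Trudinger iteration, as indicated in the paragraph preceding the theorem. The $H^1(B_1^+; y^b)$-membership of $u$ and $v$ together with the weighted trace and Sobolev embeddings (in the Muckenhoupt $A_2$ class) provides a starting Lebesgue exponent $q_0$ for the trace $u|_{B_1'}$. The growth hypothesis then gives $f(\cdot, u) \in L^{q_0/(p-1)}$ on $B_1'$, which, when inserted as Neumann data for $v$ and then propagated through $\Delta_b u = v$, yields a strictly larger integrability exponent for $u|_{B_1'}$. Iterating this gain, one arrives at $u, v \in L^\infty_{\loc}$; the explicit threshold $p^{\#}$ from \eqref{H4} is precisely what is required for the recursion to terminate, and this is the technical heart of \Cref{Lq-reg}. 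This is the step I would expect to be most delicate, since one must carefully track weighted Sobolev exponents, the effect of traces from $B_1^+$ to $B_1'$, and the double application needed because $f$ feeds into $v$ through the boundary and then into $u$ through the bulk equation.

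Once $u$ and $v$ are locally bounded, $f(\cdot, u)$ is a bounded Neumann datum for $v$, and the boundary Hölder theory for $A_2$-weighted operators (Fabes--Kenig--Serapioni, together with its refinements for the conormal problem) yields $v \in C^{0, \alpha}(B_r^+)$ with $\alpha$ in the claimed range. The distinction between $b \ge 0$ and $b < 0$ is natural: for singular weights one actually gains a full extra derivative in the boundary regularity, giving the $C^{1, \alpha}$ bound on $v$. Feeding $v$ into $\Delta_b u = v$ with the homogeneous conormal condition $\partial_y^b u = 0$, a second application of the weighted Schauder estimates promotes the regularity of $u$ by two orders, yielding $(u,v) \in C^{2,\alpha} \times C^{0,\alpha}$ or $C^{3,\alpha} \times C^{1,\alpha}$ depending on the sign of $b$.

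For the statements under the local Lipschitz assumption, I would difference-quotient the system in the horizontal directions $x$: since the weight $y^b$ depends only on $y$, tangential differentiation preserves the equation, and the Lipschitz bound on $f$ combined with the already-established Hölder regularity of $u$ makes $\partial_{x_i} f(\cdot, u)$ an admissible right-hand side, giving $\nabla_x' v \in C^{0,\alpha}$ or $C^{1,\alpha}$ in the two regimes. Finally, for $\partial_y^b v$, I would write the equation $\Delta_b v = 0$ in the form $\partial_y(y^b \partial_y v) = -y^b \Delta_x v$ and integrate in $y$ starting from the trace, using the tangential regularity of $v$ and a limiting argument as $y \to 0^+$ to extract $C^{0,\beta}$ regularity of $\partial_y^b v$ up to $B_1'$. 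In the unweighted case $b = 0$, this directly gives $\partial_y v \in C^{0,\beta}$ which upgrades the full gradient of $v$ to $C^{0,\beta}$ and thus $u \in C^{3,\beta}$.
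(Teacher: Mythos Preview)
Your outline is essentially the paper's own strategy: recast as the coupled second-order system (\Cref{v-solves}), run a Moser iteration to boost integrability (\Cref{Lq-reg}), apply weighted Schauder estimates from \cite{MR4207950} to obtain the H\"older scales in \Cref{Reg-cor}, and then use tangential difference quotients for $\nabla_x' v$ (\Cref{improved-reg}). Two small discrepancies are worth noting. First, the iteration in the paper produces $u,v\in L^q$ for every finite $q$, not $L^\infty$; boundedness is then a by-product of the Schauder step with $L^q$ Neumann data for $q$ large. Second, your proposed route to $\partial_y^b v\in C^{0,\beta}$ via integrating $\partial_y(y^b v_y)=-y^b\Delta_x v$ in $y$ needs pointwise control of $\Delta_x v$, which you have for $b<0$ (since then $\nabla_x' v\in C^{1,\alpha}$) but not for $b\ge 0$, where you have only shown $\nabla_x' v\in C^{0,\alpha}$. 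The paper sidesteps this by invoking a ready-made lemma (Lemma~4.5 in \cite{MR3165278}) that requires only $v\in L^\infty$ and $f(\cdot,u)\in C^{0,\sigma}(B_R')$, both of which are already in hand at that stage.
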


For more information on the role played by the bound on the growth condition imposed by $p^{\#}$, we refer the reader to the discussion in \Cref{p-condition}. 

After establishing the regularity of solutions in \Cref{Reg-sec}, the remaining sections of the paper focus on the study of the free boundary. For the case of linear or super-linear growth ($p \ge 2$) and non-degenerate weights ($b \le 0$), we investigate the asymptotic behavior of solutions by carrying out a blow-up analysis. One of our main contributions is the proof of a monotonicity formula for the classical Almgren frequency function 
\[
N_0^{X_0}(r, u, v) \coloneqq \frac{r \int_{B_r^+(X_0)} y^b(|\nabla u|^2 + |\nabla v|^2)\,dX}{\int_{S_r^+(X_0)} y^b(u^2 + v^2)\,d\Hh^n},
\]
where $v$ is again used as a placeholder for $\Delta_b u$ and $X_0 \in B_1'$ (see \Cref{AM-thm}). We remark here that the proof of the frequency formula is non-trivial due to the presence of the non-linear coupling in the Neumann-type boundary condition $\partial_y^b v = f(\cdot, u)$. Our proof draws inspiration from analogous results in previous investigations (see, for example, \cite{MR4316741} and \cite{FF20}). A more detailed discussion of the analytical challenges and the techniques used is provided at the beginning of \Cref{AFF-sec}.

With Almgren's frequency formula at our disposal, we follow the pathway outlined in \cite{MR2511747}. To be precise, after a preliminary blow-up analysis of the so-called Almgren rescalings of $u$ and $v$ (see \Cref{BUL}), we leverage Weiss- and Monneau-type monotonicity formulas to prove the uniqueness of blow-up solutions. Our findings are summarized in the following theorem, where we use $\mathcal{P}_{\mu}$ to denote a certain class of $\mu$-homogeneous, $b$-harmonic functions (see \Cref{Pmu-def} below for the precise definition).

\begin{thm}
\label{BU}
Let $f \colon B_1' \times \RR \to \RR$ be a locally Lipschitz-continuous function and assume that there exists a constant $C$ such that  
\[
|f(x, \zeta)| \le C|\zeta|^{p - 1}
\] 
for all $x \in B_1'$ and all $\zeta \in \RR$, where $p \in [2, p^{\#})$ and $p^{\#}$ is given as in \eqref{H4}. Moreover, assume that $\nabla_x' f$ and $\partial_{\zeta} f$ are Carath\'eodory functions with the property that for every $M > 0$ there exists a constant $C > 0$ such that 
\[
|\nabla_x' f(x, \zeta)| \le C|\zeta| \qquad \text{ and } \qquad |\partial_{\zeta} f(x, \zeta)| \le C
\]
for $\mathcal{L}^n$-a.e. $x \in B_1'$ and all $\zeta \in \RR$ with $|\zeta| \le M$. Finally, let $u$ be a non-trivial weak solution to \eqref{bil} in the sense of \Cref{ws-def}, set $v \coloneqq \Delta_b u$, and assume that $b \le 0$. Then, for every $X_0 \in B_1'$ there exists $\mu \in \mathbb{N} \cup \{0, \infty\}$ such that
\[
\lim_{r \to 0^+} N_0^{X_0}(r, u, v) = \mu.
\] 
Additionally, if $\mu < \infty$, there exist $\tilde{u}_{\mu}^{X_0}, \tilde{v}_{\mu}^{X_0} \in \mathcal{P}_{\mu}$ such that 
\begin{align*} 
\frac{u(X_0 + rX)}{r^{\mu}} \to \tilde{u}_{\mu}^{X_0}, \quad \frac{v(X_0 + rX)}{r^{\mu}} & \to \tilde{v}_{\mu}^{X_0} \qquad \text{ in } H^1(B_R^+; y^b), \\
\frac{u(X_0 + rX)}{r^{\mu}} \to \tilde{u}_{\mu}^{X_0}, \quad \frac{v(X_0 + rX)}{r^{\mu}} & \to \tilde{v}_{\mu}^{X_0} \qquad \text{ in } C^{1, \alpha}(B_R^+),
\end{align*}
for some $\alpha \in (0, 1)$ and all $R > 0$ as $r \to 0^+$. Moreover, $\tilde{u}_{\mu}^{X_0}$ and $\tilde{v}_{\mu}^{X_0}$ cannot both vanish identically. 
\end{thm}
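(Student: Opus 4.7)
The plan is to follow the classical Almgren--Weiss--Monneau pathway adapted to the coupled fourth-order setting. First, I invoke the Almgren monotonicity formula (\Cref{AM-thm}) to conclude that $r \mapsto N_0^{X_0}(r, u, v)$, possibly after multiplication by an exponential corrector that absorbs the non-homogeneous contribution of $f$, is non-decreasing; hence the limit $\mu := \lim_{r \to 0^+} N_0^{X_0}(r, u, v) \in [0, \infty]$ exists. Via the usual Gronwall/ODE comparison arguments applied to $H(r) := \int_{S_r^+(X_0)} y^b(u^2 + v^2)\,d\Hh^n$, the frequency bound yields the two-sided growth estimate $c r^{n+b+2\mu+\delta} \le H(r) \le C r^{n+b+2\mu-\delta}$ (for every $\delta > 0$ and $r$ small) and the corresponding bound on the Dirichlet integral, providing the tightness needed for the blow-up.

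Assuming $\mu < \infty$, consider the Almgren rescalings
\[
\tilde u_r(X) := \frac{u(X_0 + rX)}{r^{-(n+b)/2}H(r)^{1/2}}, \qquad \tilde v_r(X) := \frac{v(X_0 + rX)}{r^{-(n+b)/2}H(r)^{1/2}}.
\]
By the preliminary blow-up lemma (\Cref{BUL}), any sequence $r_k \to 0^+$ admits a subsequence along which $(\tilde u_{r_k}, \tilde v_{r_k})$ converges weakly in $H^1(B_R^+; y^b)$ to a limit $(\tilde u_0, \tilde v_0)$ satisfying $\Delta_b \tilde u_0 = \tilde v_0$, $\Delta_b \tilde v_0 = 0$ in $B_R^+$ with $\partial_y^b \tilde u_0 = \partial_y^b \tilde v_0 = 0$ on $B_R'$. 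The crucial point is that the nonlinear coupling $\partial_y^b v = f(\cdot, u)$ drops out of the limit: the growth bound $|f(x, \zeta)| \le C|\zeta|^{p - 1}$ with $p \ge 2$, combined with the upper estimate on $H(r)$, forces the rescaled right-hand side to vanish with a positive power of $r$. The pair $(\tilde u_0, \tilde v_0)$ extends $b$-harmonically across $\{y = 0\}$ and is $\mu$-homogeneous (Almgren's monotonicity becomes a strict equality on the limit); the spectral classification of $y$-symmetric, $b$-harmonic, homogeneous functions on $B_R$ then forces $\mu \in \NN \cup \{0\}$ and $(\tilde u_0, \tilde v_0) \in \mathcal{P}_\mu \times \mathcal{P}_\mu$. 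The normalization $\int_{S_1^+} y^b(\tilde u_r^2 + \tilde v_r^2)\,d\Hh^n = 1$ passes to the limit, so $(\tilde u_0, \tilde v_0)$ is non-trivial.

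To upgrade from subsequential convergence of the Almgren rescalings to uniqueness of the $r^\mu$-rescalings, I introduce Weiss- and Monneau-type functionals. The Weiss functional at level $\mu$,
\[
W^{X_0}_\mu(r) := \frac{1}{r^{n+b+2\mu}}\int_{B_r^+(X_0)} y^b(|\nabla u|^2 + |\nabla v|^2)\,dX - \frac{\mu}{r^{n+b+2\mu+1}}\int_{S_r^+(X_0)} y^b(u^2 + v^2)\,d\Hh^n,
\]
corrected by a lower-order term involving $f$, is shown to be non-decreasing in $r$ with a finite limit at $0^+$; this yields a uniform $H^1(B_R^+; y^b)$ bound on the family $\{u(X_0 + rX)/r^\mu,\ v(X_0 + rX)/r^\mu\}$. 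For each candidate polynomial pair $(p_u, p_v) \in \mathcal{P}_\mu \times \mathcal{P}_\mu$, the Monneau-type quantity
\[
M^{X_0}_\mu(r; p_u, p_v) := \frac{1}{r^{n+b+2\mu}}\int_{S_r^+(X_0)} y^b\bigl((u - p_u)^2 + (v - p_v)^2\bigr)\,d\Hh^n
\]
is proven to be non-decreasing when $(p_u, p_v)$ is a subsequential blow-up profile, forcing the Monneau limit to be independent of the subsequence and identifying $(\tilde u_\mu^{X_0}, \tilde v_\mu^{X_0})$ uniquely in the spirit of \cite{MR2511747}. Strong $H^1$ convergence follows from Almgren's bound together with the Monneau identification, and the $C^{1,\alpha}$ convergence follows by applying the local regularity of \Cref{main-reg} to the rescalings, whose system is of the same form with uniform bounds.

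The main obstacle is the Monneau-type monotonicity in the presence of the nonlinear Neumann coupling $\partial_y^b v = f(\cdot, u)$. When differentiating $M^{X_0}_\mu(r)$, the nonlinearity generates boundary terms that must be absorbed into a non-increasing remainder; this is precisely where the hypotheses $p \ge 2$ (which guarantees that the $f$-contribution produces a positive power of $r$ after rescaling), $|\partial_\zeta f(x, \zeta)| \le C$ and $|\nabla_x' f(x, \zeta)| \le C|\zeta|$ (which allow a controlled Taylor expansion of $f$ around the harmonic polynomial $p_u|_{y = 0}$) are used in an essential way. The restriction $b \le 0$ enters by ensuring the weight $y^b$ is non-degenerate, so that the trace inequalities, compact embeddings, and spectral arguments on $S_1^+$ underlying the blow-up analysis remain available.
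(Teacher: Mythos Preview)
Your outline follows the same Almgren--Weiss--Monneau pathway as the paper, but contains two errors worth flagging.

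First, the limiting system you write down is incorrect: under the Almgren rescaling one has $\Delta_b u_r = r^2 v_r$, so in the limit $\Delta_b \tilde u_0 = 0$, not $\Delta_b \tilde u_0 = \tilde v_0$. This matters, because with your system a $\mu$-homogeneous $\tilde u_0$ would force $\tilde v_0$ to be $(\mu-2)$-homogeneous (or zero), contradicting the claim that $(\tilde u_0,\tilde v_0)\in\mathcal{P}_\mu\times\mathcal{P}_\mu$. The paper's \Cref{BUL} obtains the decoupled system $\mathcal{L}_b(\tilde u)=\mathcal{L}_b(\tilde v)=0$ with homogeneous conormal data, and it is precisely this decoupling that allows the classification of blow-ups as $b$-harmonic polynomials of the \emph{same} degree $\mu$.

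Second, you misattribute the roles of the structural hypotheses. The bounds $|\nabla_x' f|\le C|\zeta|$ and $|\partial_\zeta f|\le C$ are not used in the Monneau step (which in the paper needs only the growth bound $|f|\le C|\zeta|^{p-1}$, via \Cref{P-lem}); they are used in the Almgren monotonicity proof itself (\Cref{AM-thm}), to control the bulk terms arising from the Rellich identity applied to $v$ after rewriting the thin integral $\int_{B_r'}(x\cdot\nabla_x' v)f(x,u)\,dx$ by the divergence theorem. Likewise, the restriction $b\le 0$ is not about availability of trace/compactness results (all of \Cref{Prem-sec} holds for $b\in(-1,1)$); it enters in Step~2 of the proof of \Cref{AM-thm} so that one can insert the weight via $1\le y^b$ after having replaced $\partial_y^b v$ by $f(\cdot,u)$. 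Finally, the paper never proves that $W_\mu$ is monotone: it only bounds $W_\mu$ from below and feeds this into the inequality $M_\mu'\ge (2/r)W_\mu - Cr^{-b}$ of \Cref{M-mon} to get $M_\mu'\ge -C$ (\Cref{AlmostMonneautonicity}); existence of $M_\mu(0^+)$ together with the non-degeneracy \Cref{non-deg} then gives the uniqueness of the homogeneous blow-up in \Cref{HBUL}.
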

Finally, the free boundary
\[
F(u) \coloneqq \left(\partial\{ u > 0 \} \cup \partial\{ u < 0\}\right) \cap B_1'
\]
is analyzed by dividing it into two components: the regular part $R(u)$, which inherits the regularity of the solution, and the singular part $S(u)$. Thanks to a by now standard argument based on the continuous dependence of blow-up solutions with respect to the singular point $X_0$ and Whitney's extension theorem, for $n = 2, 3$ we prove that the subset of $S(u)$ consisting of points with frequency $\mu \in \NN$ is contained in the countable union of $C^1$ regular manifolds of dimensions $d = 0, 1, \dots, n - 1$. For $n \ge 4$, the result holds for frequencies that are sufficiently small (see \Cref{GPthm}). 

\subsection{Outline of the paper}
The paper is organized as follows. In \Cref{Prem-sec} we introduce our notations and recall well-known results such as Poincar\'e, trace, and Sobolev-type inequalities for weighted Sobolev spaces, as well as a Rellich-type identity. Additionally, we collect results on the regularity of solutions to singular or degenerate elliptic equations that will be useful throughout the paper. In \Cref{Ex-sec}, we give the definition of weak solution and formulate assumptions under which existence of solutions can be established via variational methods (see \Cref{MIN}). The regularity of weak solutions is addressed in \Cref{Reg-sec}. Our main result in this section is the proof of \Cref{main-reg}. With this at hand, the rest of the paper is devoted to the study of the free boundary. To be precise, in \Cref{AFF-sec} we establish the almost-monotonicity of the frequency functional, while in \Cref{ABU-sec} we begin our investigation of blow-up solutions by proving the existence of Almgren-type blow-ups. In \Cref{MW-sec}, we derive monotonicity formulas of Monneau- and Weiss-type that allow us to prove the non-degeneracy of solutions. This is used later in the section to prove existence and uniqueness of homogenous blow-up solutions, thus concluding the proof of \Cref{BU}. Finally, in \Cref{FB-sec} we investigate the structure of the free boundary.

\section{Preliminary results}
\label{Prem-sec}
For the convenience of the reader, in this section we collect some of the definitions and tools used throughout the paper.
\subsection{Weighted Sobolev Spaces}
The treatment of \eqref{bil} is facilitated by the introduction of Lebesgue and Sobolev spaces with weight $\omega(X) \coloneqq y^b$, $b \in (-1, 1)$. To be precise, for a given open, bounded domain $\Omega \subset \RR^{n + 1}$ with Lipschitz boundary,  we define 
\[
L^2(\Omega; y^b) \coloneqq \{ w : |y|^{b/2}w \in L^2(\Omega) \}
\]
and 
\[
H^k(\Omega; y^b) \coloneqq \{w \in L^2(\Omega; y^b) : \partial^{\alpha}w \in L^2(\Omega; y^b) \text{ for all } \alpha \in \NN_0^{n + 1} \text{ with } |\alpha| \le k\}.
\]
Other function spaces are defined analogously. For more information, we refer the reader to the monograph \cite{MR0802206}. 

Observe that the assumption $b \in (-1, 1)$ guarantees that $|y|^b$ belongs to the Muckenhoupt class $A_2$, that is, there exists a positive constant $C$ such that for every ball $B \subset \RR^{n + 1}$ we have that
\[
\left(\fint_B |y|^b\,dX\right)\left(\fint_B |y|^{-b}\,dX\right) \le C.
\]
Moreover, we remark that the weight is singular when $b < 0$ and degenerate for $b > 0$.

This subsection collects widely known results such as Poincar\'e, trace, and Sobolev-type inequalities for the space $H^1(B_r^+; y^b)$. We conclude the subsection with a Rellich-type identity, which will be instrumental for the proof of the frequency formula later in \Cref{AFF-sec}. The identity is obtained under certain regularity assumptions that are standard in the framework we consider and it can thus be verified for our solutions (see, in particular, \Cref{FS-reg} and \Cref{vH2} below).

\begin{lem}[Poincar\'e-type inequality]
\label{PI}
For $w \in H^1(B_r^+; y^b)$ we have
\[
\frac{n  + b}{r^2} \int_{B_r^+} y^bw^2\,dX \le \frac{1}{r} \int_{S_r^+} y^bw^2\,d\Hh^n + \int_{B_r^+} y^b|\nabla w|^2\,dX.
\]
\end{lem}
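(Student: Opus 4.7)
The plan is to prove the inequality by applying the divergence theorem to the radial vector field $X = (x,y)$ weighted by $y^b$, testing against $w^2$, and then closing the inequality via Young's inequality.

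First, I would compute the key algebraic identity
\[
\di(y^b X) = y^b \di(X) + X \cdot \nabla(y^b) = (n+1)y^b + by^{b-1} \cdot y = (n+1+b)\,y^b.
\]
Multiplying by $w^2$ and integrating over $B_r^+$, the divergence theorem (applied formally for now) gives
\[
(n+1+b)\int_{B_r^+} y^b w^2\,dX = \int_{\partial B_r^+} y^b w^2 (X \cdot \nu)\,d\Hh^n - 2\int_{B_r^+} y^b w (X \cdot \nabla w)\,dX.
\]
On the flat face $B_r'$ one has $X \cdot \nu = -y = 0$, so that portion of the boundary does not contribute, while on $S_r^+$ we have $X \cdot \nu = r$.

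Next, I would bound the cross term by Young's inequality with unit constant, using $|X| \le r$ on $B_r^+$:
\[
2 y^b w (X \cdot \nabla w) \le y^b w^2 + y^b |X|^2 |\nabla w|^2 \le y^b w^2 + r^2 y^b |\nabla w|^2.
\]
Inserting this back and subtracting $\int_{B_r^+} y^b w^2\,dX$ from both sides yields
\[
(n+b)\int_{B_r^+} y^b w^2\,dX \le r\int_{S_r^+} y^b w^2\,d\Hh^n + r^2 \int_{B_r^+} y^b |\nabla w|^2\,dX,
\]
which is the claim after dividing by $r^2$.

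The only delicate point is justifying the divergence theorem above when $b < 0$, since the weight is singular at $\{y = 0\}$. I would handle this by first assuming $w \in C^\infty(\overline{B_r^+})$ and integrating over the truncated domain $B_r^+ \cap \{y > \delta\}$. This introduces an additional boundary term on $\{y = \delta\} \cap B_r$ of the form
\[
-\int_{\{y=\delta\} \cap B_r} \delta^b w^2 \cdot \delta \, d\Hh^n = -\delta^{b+1} \int_{\{y=\delta\} \cap B_r} w^2\,d\Hh^n,
\]
which tends to $0$ as $\delta \to 0^+$ because $b+1 > 0$ and $w$ is bounded on the relevant set. Passing $\delta \to 0^+$ gives the identity for smooth functions, and a density argument (smooth functions are dense in $H^1(B_r^+; y^b)$ since $y^b$ is an $A_2$ weight) extends the inequality to all $w \in H^1(B_r^+; y^b)$, with the trace on $S_r^+$ interpreted in the appropriate weighted sense.
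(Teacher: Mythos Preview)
Your proof is correct and follows essentially the same approach as the paper: both apply the divergence theorem to the field $y^b w^2 X$ (you organize it as $w^2\di(y^b X)$, the paper as $\di(y^b w^2 X) - \nabla(y^b w^2)\cdot X$, which are algebraically equivalent) and then close with the same Young inequality. Your truncation-and-density justification for the singular case $b<0$ is in fact more explicit than what the paper writes.
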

\begin{proof}
The proof is a straightforward adaptation of a well-known argument, and is included here for the reader's convenience. Recalling that $X \coloneqq (x, y)$, we have that
\begin{align*}
(n + 1) \int_{B_r^+} y^bw^2\,dX & = \int_{B_r^+} (\di(y^bw^2 X) - \nabla(y^b w^2) \cdot X)\,dX \\
& = r \int_{S_r^+} y^bw^2\,d\Hh^n - \int_{B_r^+} (2y^b w \nabla w \cdot X + b y^b w^2)\,dX \\
& \le r \int_{S_r^+} y^bw^2\,d\Hh^n + \int_{B_r^+} ((1 - b)y^b w^2 + r^2 y^b|\nabla w|^2)\,dX.
\end{align*} 
The desired estimate follows by rearranging the terms in the previous inequality.
\end{proof}

\begin{lem}[Trace operators]
\label{TO}
There exist compact trace operators
\begin{align*}
\operatorname{Tr} & \colon H^1(B_r^+; y^b) \to L^2(S_r^+; y^b), \\
\operatorname{Tr} & \colon H^1(B_r^+; y^b) \to L^2(B_r').
\end{align*}
Moreover, 
\[
\operatorname{Tr} \colon H^1(B_r^+; y^b) \to H^{(1 - b)/2}(B_r')
\]
is linear, continuous, and onto. 
\end{lem}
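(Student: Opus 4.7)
The plan is to address the three assertions in turn, leveraging throughout the Muckenhoupt $A_2$ property of the weight $|y|^b$ (guaranteed by $b \in (-1,1)$) and standard tools from the theory of weighted Sobolev spaces, for which I would refer to \cite{MR0802206}.

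For the trace onto $L^2(S_r^+; y^b)$, I would localize $S_r^+$ by a partition of unity. On the part bounded away from $\{y=0\}$, the weight $y^b$ is comparable to a strictly positive constant, so both continuity and compactness of the trace reduce to the classical Sobolev trace theorem. On patches meeting the edge $\partial B_r'$, a weighted trace estimate of the form
\[
\int_{S_r^+} y^b w^2 \, d\Hh^n \le C(r) \|w\|_{H^1(B_r^+; y^b)}^2
\]
can be obtained by integrating by parts against the radial vector field $X$, closely mirroring the computation in the proof of \Cref{PI}. Compactness then follows from an Ehrling-type argument, combined with the compact embedding $H^1(B_r^+; y^b) \hookrightarrow L^2(B_r^+; y^b)$, which is itself a consequence of the $A_2$ condition.

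For the second and third assertions, the efficient route is to first prove the continuous mapping into $H^{(1-b)/2}(B_r')$, since the compact embedding $H^{(1-b)/2}(B_r') \hookrightarrow L^2(B_r')$ (a consequence of the fractional Rellich--Kondrachov theorem, noting that $(1-b)/2 > 0$) then immediately delivers the compact $L^2$-trace of the second assertion. Continuity of the trace from $H^1(B_r^+; y^b)$ into $H^{(1-b)/2}(B_r')$ is a classical result in the theory of Besov spaces with $A_2$-weights, going back to work of Nekvinda; alternatively, it can be verified directly by a Fourier-analytic computation after even reflection across $\{y = 0\}$ and a suitable extension to $\RR^{n+1}$. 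Surjectivity is obtained by exhibiting an explicit right inverse: given $g \in H^{(1-b)/2}(B_r')$, one extends $g$ to a function in $H^{(1-b)/2}(\RR^n)$, then takes the $\Delta_b$-harmonic extension furnished by convolution with the Poisson kernel
\[
P_b(x, y) = c_{n, b} \frac{y^{1-b}}{(|x|^2 + y^2)^{(n+1-b)/2}}.
\]
A direct computation shows that this extension lies in $H^1(\RR^{n+1}_+; y^b)$ with norm controlled by the $H^{(1-b)/2}$-norm of $g$; restricting to $B_r^+$ and multiplying by a cutoff then produces the desired preimage.

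The main obstacle is the sharp trace characterization in the third assertion, which hinges on either the Fourier-analytic machinery associated with the Poisson kernel for $\Delta_b$ or the invocation of refined results on weighted Besov spaces. The first two assertions, by contrast, reduce to routine (if technically delicate) adaptations of unweighted trace and compactness theorems once one has access to the $A_2$ theory.
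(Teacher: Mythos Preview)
The paper does not actually prove this lemma: it simply refers the reader to \cite{MR3169789} and Proposition~2.1 in \cite{MR3226738}. So there is no ``paper's own proof'' to compare against, and your sketch is in effect an outline of what those references contain.

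Your outline is broadly on the right track and matches the standard strategy in the cited literature. The reduction of the $L^2(B_r')$ compactness to the continuous $H^{(1-b)/2}$ trace plus fractional Rellich--Kondrachov is exactly how it is typically done, and the Poisson-kernel right inverse is the canonical construction for surjectivity (this is essentially the content of the Caffarelli--Silvestre extension, and is precisely what \cite{MR3226738} records). One point to tighten: for compactness of the trace onto $L^2(S_r^+;y^b)$, the Ehrling-type interpolation you invoke is not quite sufficient on its own near the edge $\partial B_r'$, since you still need to control a boundary norm by an interior norm plus a small gradient term. A cleaner route is to use the compact embedding $H^1(B_r^+;y^b)\hookrightarrow L^2(B_r^+;y^b)$ together with the identity underlying \Cref{PI} (which bounds the $S_r^+$ integral by the gradient plus the $L^2$ bulk term) to verify directly that bounded sequences in $H^1$ have traces with convergent subsequences in $L^2(S_r^+;y^b)$; this is how \cite{MR3169789} proceeds. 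Otherwise your plan is sound.
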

For a proof of \Cref{TO}, we refer to \cite{MR3169789} and Proposition 2.1 in \cite{MR3226738}. In order to ease notation, in the following we continue to use $w$ in place of $\operatorname{Tr}w$ whenever $w \in H^1(B_r^+; y^b)$.

\begin{lem}[Trace inequality]
\label{TI}
There exists a constant $C$ such that for all $r > 0$ and all $w \in H^1(B_r^+; y^b)$, we have that
\[
\int_{B_r'}w^2\,dx \le C \left(r^{1 - b} \int_{B_r^+} y^b |\nabla w|^2\,dX + r^{-b} \int_{S_r^+} y^b w^2\,d\Hh^n \right).
\]
\end{lem}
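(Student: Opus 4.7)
My plan is to reduce the inequality to the base case $r = 1$ by a standard scaling argument, and to establish that base case by combining \Cref{PI} with the continuity of the trace operator supplied by \Cref{TO}.

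For the scaling step, I will introduce $w_r(X) \coloneqq w(rX)$ for $X \in B_1^+$. A direct change of variables $Y = rX$, keeping track of how the weight $y^b$, the volume element $dX$, and the surface measure on $S_1^+$ transform, yields
\begin{align*}
\int_{B_1'} w_r^2\,dx & = r^{-n}\int_{B_r'} w^2\,dx, \\
\int_{B_1^+} y^b |\nabla w_r|^2\,dX & = r^{1-b-n}\int_{B_r^+} y^b|\nabla w|^2\,dX, \\
\int_{S_1^+} y^b w_r^2\,d\Hh^n & = r^{-b-n}\int_{S_r^+} y^b w^2\,d\Hh^n.
\end{align*}
Consequently, applying the $r = 1$ inequality to $w_r$ and then multiplying through by $r^n$ recovers the claimed inequality, with its explicit factors $r^{1-b}$ and $r^{-b}$, for an arbitrary $r > 0$.

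For the base case $r = 1$, \Cref{TO} supplies a continuous trace operator $\operatorname{Tr}\colon H^1(B_1^+; y^b) \to L^2(B_1')$, so there exists a constant $C$ for which
\[
\int_{B_1'} w^2\,dx \le C \left( \int_{B_1^+} y^b|\nabla w|^2\,dX + \int_{B_1^+} y^b w^2\,dX \right).
\]
\Cref{PI} applied with $r = 1$ then lets me replace the bulk weighted $L^2$ term on the right by a contribution on the hemisphere, via
\[
\int_{B_1^+} y^b w^2\,dX \le \frac{1}{n+b}\left( \int_{S_1^+} y^b w^2\,d\Hh^n + \int_{B_1^+} y^b |\nabla w|^2\,dX \right),
\]
and combining the two bounds closes the $r = 1$ estimate.

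The only real obstacle is the careful book-keeping of the scaling factors, since both the weight $y^b$ and the $n$-dimensional Hausdorff measure on $S_r^+$ contribute non-trivial powers of $r$ under $Y = rX$; once these are correctly tabulated, the inequality follows at once from the two preceding lemmas.
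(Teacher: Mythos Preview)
Your argument is correct. The scaling computations are accurate, and the combination of the trace continuity from \Cref{TO} with the Poincar\'e-type inequality \Cref{PI} cleanly yields the $r=1$ case; note that $n+b>0$ since $n\ge 2$ and $b\in(-1,1)$, so dividing by $n+b$ in the last step is legitimate.

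The paper itself does not supply a proof of \Cref{TI}: it simply refers the reader to Lemma~2.5 in \cite{MR3169789}. Your approach is therefore more self-contained, deriving the inequality directly from the two preceding lemmas already stated in the paper rather than invoking an external reference. This has the modest advantage of making explicit that \Cref{TI} is a formal consequence of \Cref{PI} and \Cref{TO}, with no additional analytic input required.
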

This result follows immediately from Lemma 2.5 in \cite{MR3169789} once one observes that $|x|<r$ on $B_r'$.

\begin{lem}[Sobolev-type inequalities]
\label{SII}
There exist constants $S = S(n, b)$ and $S'(n, b)$ such that for all $w \in H^1(B_1^+; y^b)$, we have that
\begin{align}
\left(\int_{B_1'} |w|^{2^*_b}\,dx \right)^{2/2^*_b} & \le S' \left(\int_{B_1^+} y^b |\nabla w|^2\,dX + \int_{S_1^+} y^b w^2\,d\Hh^n\right), \label{S'E} \\
\left(\int_{B_1^+}y^b |w|^{2^{**}_b}\,dX\right)^{2/2^{**}_b} & \le S \left(\int_{B_1^+} y^b |\nabla w|^2\,dX + \int_{B_1^+} y^b w^2\,dX \right). \label{SE}
\end{align}
Here $2^*_b$ and $2^{**}_b$ are the fractional Sobolev exponent, defined via
\[
2^*_b \coloneqq \frac{2n}{n - 1 + b}
\]
and 
\[
2^{**}_b \coloneqq 
\left\{
\arraycolsep=2.4pt\def\arraystretch{2.6}
\begin{array}{rl}
\displaystyle \frac{2(n + 1 + b)}{n - 1 + b} & \text{ if } 0 < b < 1, \\
\displaystyle \frac{2(n + 1)}{n - 1} & \text{ if } -1 < b \le 0.
\end{array}
\right.
\]
\end{lem}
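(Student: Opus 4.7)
The plan is to deduce both inequalities from standard embedding theorems for weighted Sobolev spaces with Muckenhoupt $A_2$ weights, combined with the trace theorem \Cref{TO} and the Poincar\'e-type inequality \Cref{PI}.

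For the trace inequality \eqref{S'E}, I would first invoke the last statement of \Cref{TO} to control the norm of $w$ in the fractional Sobolev space $H^s(B_1')$, where $s \coloneqq (1 - b)/2 \in (0, 1)$, by its $H^1(B_1^+; y^b)$-norm. The classical fractional Sobolev embedding $H^s(B_1') \hookrightarrow L^{2n/(n - 2s)}(B_1')$ then yields precisely the exponent $2n/(n - 1 + b) = 2^*_b$. To reconcile the full $H^1(B_1^+; y^b)$-norm appearing on the right of the trace estimate with the specific right-hand side of \eqref{S'E}, which does not contain the bulk term $\int_{B_1^+} y^b w^2\,dX$, I would apply \Cref{PI} with $r = 1$ to absorb this term into the remaining gradient and lateral-boundary contributions (note that $n + b > 0$ since $n \ge 2$ and $b > -1$).

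For the weighted interior inequality \eqref{SE}, the strategy is to reduce to a Fabes-Kenig-Serapioni type weighted Sobolev inequality on the full ball $B_1$ for the Muckenhoupt $A_2$ weight $|y|^b$. Extending $w$ to $\tilde w \in H^1(B_1; |y|^b)$ by even reflection across $\{y = 0\}$ produces an extension with norm comparable to that of $w$, after which the weighted embedding on $B_1$ applies directly. The relevant doubling exponent $Q$ of the measure $|y|^b\,dX$ equals $n + 1$ at interior points away from the hyperplane $\{y = 0\}$ and $n + 1 + b$ at points on it, so $Q \coloneqq n + 1 + \max\{0, b\}$ is the effective global dimension and governs the sharp Sobolev exponent $2Q/(Q - 2)$. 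A direct calculation verifies that this coincides with $2^{**}_b$ in both regimes: it equals $2(n + 1 + b)/(n - 1 + b)$ when $b \in (0, 1)$ and $2(n + 1)/(n - 1)$ when $b \in (-1, 0]$, explaining the piecewise definition.

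The main technical input is the weighted Sobolev embedding for $A_2$ weights, which is by now classical in the theory of degenerate elliptic operators; the rest of the argument is a careful bookkeeping between the seminorms on the two sides and between the full ball and its upper half. The only mildly subtle point is the identification of the effective dimension in the two regimes of $b$, which is what produces the asymmetry in the definition of $2^{**}_b$.
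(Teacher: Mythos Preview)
Your proposal is correct and, in fact, goes further than the paper itself: the paper does not prove \Cref{SII} but simply cites Lemma~2.6 in \cite{MR3169789} for \eqref{S'E} and Theorem~19.10 in \cite{MR1069756} for \eqref{SE}. Your sketch supplies the ideas underlying those references.

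For \eqref{S'E}, your route via \Cref{TO} into $H^{(1-b)/2}(B_1')$ followed by the classical fractional Sobolev embedding on $B_1'$ is precisely the mechanism behind the cited result, and the use of \Cref{PI} to trade the bulk $L^2$ term for the lateral boundary term is the correct bookkeeping step. For \eqref{SE}, the even-reflection-plus-$A_2$-embedding argument is sound; the identification of the effective homogeneous dimension $Q = n + 1 + b^+$ (from the scaling of $|y|^b\,dX$ on balls centered on versus away from $\{y=0\}$) is what the Opic--Kufner result encodes, and your computation of $2Q/(Q-2)$ recovers exactly the piecewise definition of $2^{**}_b$. The only point worth a sentence of care is that the even reflection of an arbitrary $w \in H^1(B_1^+; y^b)$ does lie in $H^1(B_1; |y|^b)$ without any boundary condition on $\partial_y^b w$, since the traces match across $\{y=0\}$ and hence no singular layer appears in the distributional $y$-derivative; this is standard but should be stated.
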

For a proof of \eqref{S'E}, we refer the reader to Lemma 2.6 in \cite{MR3169789}, whereas \eqref{SE} follows from Theorem 19.10 in \cite{MR1069756}. We also remark that versions of \eqref{S'E} and \eqref{SE} continue to hold in $B_r^+$ in view of a straightforward scaling argument. 

In the following, we let 
\[
H^2_x(B_r^+; y^b) \coloneqq \left\{w \in H^1(B_r^+; y^b) : \partial_{x_i} w \in H^1(B_r^+; y^b), i = 1, \dots, n \right\}.
\]

\begin{lem}[Rellich identity]
\label{Rellich}
Let $w \in H^2_x(B_R^+; y^b)$ be such that $\partial_y^b w \in H^1(B_R^+; y^{-b})$ (see \eqref{b_normal} for the definition of $\partial_y^b w$). Then
\begin{multline}
\label{RID}
r \int_{S_r^+} y^b \left(|\nabla w|^2 - 2 \left(\nabla w \cdot \frac{X}{r}\right)^2\right)\,d\Hh^n = (n + b - 1) \int_{B_r^+} y^b |\nabla w|^2\,dX \\ - 2 \int_{B_r^+} y^b (\nabla w \cdot X) \Delta_b w\,dX - 2 \int_{B_r'} (\nabla_x' w \cdot x)\partial_y^bw\,dx
\end{multline}
for $\mathcal{L}^1$-a.e.\@ $r \in (0, R)$.
\end{lem}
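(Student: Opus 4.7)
The plan is to apply the divergence theorem on $B_r^+$ to the weighted Rellich multiplier vector field
\[
V \coloneqq y^b |\nabla w|^2 X - 2 y^b (\nabla w \cdot X) \nabla w,
\]
which is the natural analogue, adapted to the weight $y^b$, of the classical Rellich--Pohozaev multiplier. Identity \eqref{RID} will then emerge by matching the interior divergence with the bulk terms on the right-hand side and by splitting the boundary integral over $\partial B_r^+ = S_r^+ \cup B_r'$ into its spherical and flat contributions.

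For the interior computation, I would apply the product rule directly, using $\nabla(y^b) \cdot X = b y^b$ and the identity $\nabla(\nabla w \cdot X) \cdot \nabla w = |\nabla w|^2 + \tfrac{1}{2} X \cdot \nabla |\nabla w|^2$. The cross terms in $y^b X \cdot \nabla |\nabla w|^2$ arising from the two pieces of $V$ cancel, and after combining $\Delta w$ with the lower-order term $(b/y) \partial_y w$ to recover $\Delta_b w$, a short calculation yields
\[
\di V = (n + b - 1) y^b |\nabla w|^2 - 2 y^b (\nabla w \cdot X) \Delta_b w,
\]
which upon integration over $B_r^+$ reproduces exactly the two bulk terms on the right-hand side of \eqref{RID}. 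For the boundary, on $S_r^+$ the outer unit normal is $\nu = X/r$, so $V \cdot \nu = r y^b (|\nabla w|^2 - 2(\nabla w \cdot X/r)^2)$, producing the left-hand side of \eqref{RID}. On $B_r'$ one has $\nu = -e_y$ and $X \cdot \nu = -y$: the first summand of $V$ contributes $-y^{b+1}|\nabla w|^2$, whose trace on $\{y = 0\}$ vanishes since $b + 1 > 0$; the second summand gives $2 y^b (\nabla_x' w \cdot x + y \partial_y w) \partial_y w$, where the piece $2 y^{b+1}(\partial_y w)^2$ again has vanishing trace, while $2 y^b (\nabla_x' w \cdot x) \partial_y w$ produces $2 (\nabla_x' w \cdot x) \partial_y^b w$ in the trace sense by \eqref{b_normal}. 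Rearranging gives \eqref{RID}.

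I expect the main obstacle to be the rigorous justification of this formal calculation at the stated regularity. Since $w$ is not assumed to belong to $H^2(B_R^+; y^b)$ in the full sense, the pure normal second derivative $\partial_{yy} w$ is not directly at our disposal, and the weight $y^b$ is singular or degenerate on $B_r'$. The two hypotheses $w \in H^2_x(B_R^+; y^b)$ and $\partial_y^b w \in H^1(B_R^+; y^{-b})$ are precisely tailored to this setting: the former controls all second derivatives involving at least one horizontal direction, while the latter provides enough weighted regularity for $y^b \partial_y w$ to possess a trace on $B_r'$ identifiable with $\partial_y^b w$ as in \eqref{b_normal}. To make the computation rigorous, I would approximate $w$ by smooth functions and apply the classical smooth Rellich identity on the truncated domain $B_r^+ \cap \{y > \e\}$, then pass to the limit $\e \to 0^+$ via dominated convergence together with the trace continuity of \Cref{TO}. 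The a.e.\@ qualifier on $r$ arises from Fubini's theorem applied to the radial variable, ensuring that the $S_r^+$-slice integrals are well-defined for almost every $r \in (0, R)$.
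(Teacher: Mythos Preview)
Your proposal is correct and follows essentially the same approach as the paper: both compute the divergence of the vector field $V = y^b|\nabla w|^2 X - 2y^b(\nabla w \cdot X)\nabla w$, obtain the identity $\di V = (n+b-1)y^b|\nabla w|^2 - 2y^b(\nabla w \cdot X)\Delta_b w$, reduce to the smooth case by density, and apply the divergence theorem on $B_r^+ \cap \{y > \e\}$ before letting $\e \to 0^+$. Your treatment of the flat boundary contribution is in fact more explicit than the paper's, which merely invokes the divergence theorem on the truncated domain and passes to the limit without spelling out the cancellation of the $y^{b+1}$ terms.
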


\begin{proof}
Observe that the all the terms on the right-hand side of \eqref{RID} are well-defined in view of the regularity assumptions on $w$. Notice also that, by the coarea formula, 
\[
\int_{B_R^+} y^b\left||\nabla w|^2 - 2 \left(\nabla w \cdot \frac{X}{|X|}\right)^2\right|\,dX = \int_0^R \int_{S_r^+} y^b\left||\nabla w|^2 - 2 \left(\nabla w \cdot \frac{X}{r}\right)^2\right|\,d\Hh^n\,dr.
\]
Consequently, if we set
\[
d(r) \coloneqq \int_{S_r^+} y^b \left(|\nabla w|^2 - 2 \left(\nabla w \cdot \frac{X}{r}\right)^2\right)\,d\Hh^n,
\]
we obtain that $d \in L^1(0, R)$. This implies that the left-hand side of \eqref{RID} is well-defined for $\mathcal{L}^1$-a.e.\@ $r \in (0, R)$. 

By a standard density argument, it is enough to establish \eqref{RID} for $w \in C^{\infty}(B_r^+)$. Observe that
\begin{align*} 
\di(X y^b|\nabla w|^2) & = X \cdot \nabla(y^b |\nabla w|^2) + (n + 1) y^b |\nabla w|^2 \\
& = 2 y^b\sum_{i, k} X_i(\partial_{i k} w)(\partial_k w) + (n + b + 1) y^b|\nabla w|^2
\end{align*}
and 
\begin{align*}
\di((\nabla w \cdot X)y^b\nabla w) & = \nabla(\nabla w \cdot X) \cdot y^b \nabla w + y^b (\nabla w \cdot X) \Delta_b w \\
& = y^b|\nabla w|^2 + y^b\sum_{i, k} X_i(\partial_{i k} w)(\partial_k w) + y^b(\nabla w \cdot X)\Delta_b w.
\end{align*}
Combining the two previous identities, we obtain that 
\[
\di(X y^b|\nabla w|^2 - 2(\nabla w \cdot X)y^b\nabla w) = (n + b - 1) y^b |\nabla w|^2 - 2y^b(\nabla w \cdot X)\Delta_b w.
\]
From this, the desired identity follows by an application of the divergence theorem in $B_r^+ \cap \{y > \tau\}$ and by subsequently letting $\tau \to 0^+$.
\end{proof}

\subsection{Regularity of solutions to singular or degenerate elliptic equations}
\label{reg-subsec}
This subsection contains a discussion of regularity results concerning weak solutions for a class of second-order elliptic problems in divergence form, defined via 
\[
\mathcal{L}_b(w) \coloneqq \di(y^b\nabla w), \qquad b \in (-1, 1).
\]
The regularity theory for degenerate or singular elliptic (and parabolic) equations has received considerable attention in recent years due to its connection to the fractional Laplacian and its realization as a Dirichlet-to-Neumann map (see \cite{MR2354493}, \cite{yang2013}, and \cite{MR4429579}). We present the results in a framework pertinent to our paper's scope, while acknowledging that broader formulations exist (see, for example, \cite{MR0643158}, \cite{MR4536419}, \cite{MR4207950}, and the references therein).

Let $\gamma_1 \in L^2(B_R^+; y^b)$ and $\gamma_2 \in L^2(B_R')$ be given, and consider the Neumann problem
\begin{equation}
\label{2nd}
\left\{
\arraycolsep=1.4pt\def\arraystretch{1.6}
\begin{array}{rll}
\mathcal{L}_b(w) = & y^b \gamma_1 & \text{ in } B_R^+, \\
\partial_y^b w = & \gamma_2  & \text{ on } B_R'.
\end{array}
\right.
\end{equation}
We recall that $\partial_y^b w(X) \coloneqq y^b \partial_y w(X)$ for $X = (x,y) \in B_R^+$ and $\partial_y^b w(x, 0)$ is used to denote its trace on $B_R'$, that is, $\partial_y^b w(x, 0) \coloneqq \lim_{y \to 0^+} y^b \partial_y w(x, y)$. 

\begin{definition}
\label{ws2-def}
We say that $w \in H^1(B_R^+; y^b)$ is a weak solution to \eqref{2nd} if 
\[
\int_{B_R^+} y^b \nabla w \cdot \nabla \phi\,dX + \int_{B_R^+} y^b \gamma_1 \phi\,dX + \int_{B_R'} \gamma_2 \phi\,dx = 0
\]
for all $\phi \in H^1_{0, S_R^+}(B_R^+; y^b)$, where 
\[
H^1_{0, S_R^+}(B_R^+; y^b) \coloneqq \overline{C^{\infty}_c(\overline{B_R^+} \setminus \overline{S_R^+})}^{\| \cdot \|_{H^1(B_R^+; y^b)}}.\]
\end{definition}

We begin with a general Sobolev-type regularity result for weak solutions to \eqref{2nd}. For a proof (as well as for a more general statement), we refer the reader to Theorem 2.1 in \cite{MR4536419}. 
\begin{thm}
\label{FS-reg}
Under the assumptions that $\gamma_1 \in L^2(B_R^+; y^b)$ and $\gamma_2 \in W^{1, \frac{2n}{n + 1 - b}}(B_R')$, let $w$ be a weak solution to \eqref{2nd} in the sense of \Cref{ws2-def}. Then, for all $r \in (0, R)$ we have that 
\[
w \in H^2_x(B_r^+; y^b) \qquad \text{ and } \qquad \partial_y^b w \in H^1(B_r^+; y^{-b}).
\]
Moreover, there exists a constant $C = C(n, b, R, r)$ such that 
\[
\|\nabla_x' w\|_{H^1(B_r^+; y^b)} + \| \partial_y^b w\|_{H^1(B_r^+; y^{-b})} \le C \left(\|w\|_{H^1(B_R^+; y^b)} + \|\gamma_1\|_{L^2(B_R^+; y^b)} + \|\gamma_2\|_{W^{1, \frac{2n}{n + 1 - b}}(B_R')}\right).
\] 
\end{thm}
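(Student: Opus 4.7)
My plan is to adapt the Nirenberg tangential difference quotient method to the weighted setting: since $y^b$ depends only on $y$, it is invariant under horizontal translations, so finite differences in the directions $x_i$ (for $i = 1, \ldots, n$) are fully compatible with the weighted bilinear form $(u, \phi) \mapsto \int y^b \nabla u \cdot \nabla \phi$. Once the tangential regularity $w \in H^2_x(B_r^+; y^b)$ is established, the equation $\mathcal{L}_b w = y^b \gamma_1$, rewritten in the interior as $\partial_y(\partial_y^b w) = y^b(\gamma_1 - \Delta_x w)$, will allow us to recover the vertical part of the regularity of $\partial_y^b w$ with weight $y^{-b}$, while the tangential components follow from the identity $\partial_{x_i}(\partial_y^b w) = y^b \partial_y(\partial_{x_i} w)$.

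For the tangential regularity, I would fix $r < \rho < R$, a cutoff $\eta \in C^\infty_c(B_\rho)$ with $\eta \equiv 1$ on $B_r$, and for each $i$ and $|h|$ sufficiently small test the weak formulation of \Cref{ws2-def} with $\phi := \tau_{-h}^i(\eta^2 \tau_h^i w)$, where $\tau_h^i f(X) := (f(X + h e_i) - f(X))/h$. The $x_i$-invariance of $y^b$ yields the integration-by-parts identity
\[
\int_{B_R^+} y^b \nabla w \cdot \nabla \phi \, dX = - \int_{B_R^+} y^b \nabla(\tau_h^i w) \cdot \nabla(\eta^2 \tau_h^i w) \, dX,
\]
from which, after expanding the product rule and handling the cross term via Cauchy-Schwarz, the leading quadratic contribution $\int y^b \eta^2 |\nabla \tau_h^i w|^2$ is isolated. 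The bulk right-hand side is estimated by bounding $\|\tau_{-h}^i(\eta^2 \tau_h^i w)\|_{L^2(B_R^+;y^b)}$ by $\|\partial_{x_i}(\eta^2 \tau_h^i w)\|_{L^2(B_R^+;y^b)}$, which after Cauchy-Schwarz and Young again yields absorbable terms. For the boundary contribution involving $\gamma_2$, I would instead shift the difference quotient onto $\gamma_2$, using the hypothesis $\gamma_2 \in W^{1, \frac{2n}{n+1-b}}(B_R')$ to secure the uniform bound $\|\tau_h^i \gamma_2\|_{L^{\frac{2n}{n+1-b}}(B_R')} \leq \|\partial_{x_i} \gamma_2\|_{L^{\frac{2n}{n+1-b}}(B_R')}$. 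Since $\frac{2n}{n+1-b}$ is exactly the H\"older conjugate of the trace Sobolev exponent $2^*_b = \frac{2n}{n-1+b}$, H\"older's inequality combined with the Sobolev trace inequality \eqref{S'E} applied to $\eta^2 \tau_h^i w$ again produces an absorbable term. Choosing the absorption constants suitably then yields a bound on $\int_{B_r^+} y^b |\tau_h^i(\nabla w)|^2 \, dX$ that is uniform in $h$.

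By the usual characterization of weighted Sobolev functions via difference quotients (valid since $|y|^b \in A_2$), letting $h \to 0$ gives $\partial_{x_i} w \in H^1(B_r^+; y^b)$ for each $i = 1, \ldots, n$, together with the announced bound on $\|\nabla_x' w\|_{H^1(B_r^+; y^b)}$. To conclude the proof, I observe that $\partial_y(\partial_y^b w) = y^b(\gamma_1 - \Delta_x w)$ has right-hand side in $L^2(B_r^+; y^b)$, so a direct change of weight yields
\[
\int_{B_r^+} y^{-b} |\partial_y(\partial_y^b w)|^2 \, dX = \int_{B_r^+} y^b (\gamma_1 - \Delta_x w)^2 \, dX < \infty.
\]
The remaining tangential derivatives $\partial_{x_i}(\partial_y^b w) = y^b \partial_y \partial_{x_i} w$ and the function $\partial_y^b w = y^b \partial_y w$ itself satisfy analogous identities that reduce their $L^2(B_r^+; y^{-b})$ norms to $L^2(B_r^+; y^b)$ norms already controlled. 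The main obstacle I anticipate is the boundary term: the argument closes only because the integrability exponent $\frac{2n}{n+1-b}$ assumed on $\nabla \gamma_2$ is precisely conjugate to the trace Sobolev exponent $2^*_b$, and some care is needed to justify $\tau_{-h}^i(\eta^2 \tau_h^i w)$ as a legitimate element of $H^1_{0, S_R^+}(B_R^+; y^b)$ via a standard approximation argument.
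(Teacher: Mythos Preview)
The paper does not supply its own proof of this theorem; it simply cites Theorem~2.1 in \cite{MR4536419}. Your approach via Nirenberg's tangential difference quotient method---exploiting that the weight $y^b$ is invariant under horizontal translations, then recovering the vertical regularity of $\partial_y^b w$ directly from the equation $\partial_y(\partial_y^b w) = y^b(\gamma_1 - \Delta_x w)$---is the standard and correct strategy for results of this type, and is almost certainly what the cited reference carries out as well. Your observation that $\tfrac{2n}{n+1-b}$ is exactly the H\"older conjugate of the trace Sobolev exponent $2^*_b$ is indeed the reason that particular integrability is assumed on $\gamma_2$, and the absorption argument you sketch closes without difficulty.
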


Observe that weak solutions as in \Cref{FS-reg} locally satisfy the assumptions of \Cref{Rellich} (see also Proposition 2.3 in \cite{MR4536419}).

For the H\"older continuity of solutions, our reference is \cite{MR4207950}, where Schauder-type estimates are obtained for a large class of degenerate or singular elliptic problems. The first result we recall concerns $\mathcal{L}_b$-harmonic functions subject to non-homogenous Neumann boundary conditions. For a proof, see Theorem 1.5 and Theorem 1.6 in \cite{MR4207950}. 

\begin{thm}
\label{STV1.5}
Let $w \in H^1(B_R^+; y^b)$ be a weak solution to \eqref{2nd} with $\gamma_1 = 0$ and $\gamma_2 \in L^q(B_R')$ with $q > n/(1 - b)$. Then, for all $r \in (0, R)$ and $\alpha \in (0, 1 - b - n/q]$ there exists a constant $C = C(n, b, R, r, q, \alpha)$ such that 
\[
\|w\|_{C^{0, \alpha}(B_r^+\cup B'_r)} \le C \left(\|w\|_{L^2(B_R^+; y^b)} + \|\gamma_2\|_{L^q(B_R')}\right).
\]
Moreover, if $b < 0$ and $\gamma_2 \in L^q(B_R')$ with $q > -n/b$ then, for all $r \in (0, R)$ and $\alpha \in (0, - b - n/q]$, there exists a constant $C = C(n, b, r, R, q, \alpha)$ such that 
\[
\|w\|_{C^{1, \alpha}(B_r^+\cup B'_r)} \le C \left(\|w\|_{L^2(B_R^+; y^b)} + \|\gamma_2\|_{L^q(B_R')}\right).
\]
\end{thm}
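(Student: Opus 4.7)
The plan is to reduce the non-homogeneous Neumann problem to a homogeneous one for which the weighted De Giorgi--Nash--Moser theory of Fabes, Kenig, and Serapioni applies, and then to sharpen the H\"older exponent via a Campanato-type iteration. The key auxiliary step is the even reflection $\tilde{w}(x, y) \coloneqq w(x, |y|)$: for $\gamma_2 = 0$ it yields a distributional solution of $\di(|y|^b \nabla \tilde{w}) = 0$ in $B_R$, whereas in the non-homogeneous case the reflected equation picks up the boundary source $2\gamma_2 \Hh^n \lfloor B_R'$ supported on $\{y = 0\}$. Since $|y|^b \in A_2$ for $b \in (-1, 1)$, the underlying weighted elliptic theory is at our disposal and provides a baseline estimate $\tilde{w} \in C^{0, \alpha_0}_{\loc}(B_R)$ for some $\alpha_0 > 0$, controlled by the weighted $L^2$ norm.

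To obtain the sharp exponent, I would carry out a Campanato iteration: at each point $X_0 \in \overline{B_{R/2}^+}$ and each scale $\rho$, decompose $w = h_{\rho} + \psi_{\rho}$ on $B_{\rho}^+(X_0)$, where $h_{\rho}$ solves the homogeneous Neumann problem with boundary values $w$ on $S_{\rho}^+(X_0)$, and $\psi_{\rho}$ absorbs the Neumann data. Testing the equation for $\psi_{\rho}$ against itself, together with the trace inequality of \Cref{TI} and H\"older's inequality in $L^q(B_{\rho}'(X_0))$, yields an energy bound of the form
\[
\int_{B_{\rho}^+(X_0)} y^b |\nabla \psi_{\rho}|^2 \, dX \le C \rho^{2(1 - b - n/q)} \|\gamma_2\|_{L^q(B_R')}^2,
\]
while the baseline estimate applied to $h_{\rho}$ produces Campanato-type decay of the weighted mean oscillation of $w$. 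Iterating these two pieces at dyadic scales and invoking the weighted Campanato characterization of H\"older spaces delivers the exponent $\alpha = 1 - b - n/q$. For the $C^{1, \alpha}$ statement when $b < 0$, the observation that $\partial_y w \sim y^{|b|} \gamma_2 \to 0$ on $B_R'$ suggests that $w$ extends as a $C^1$ function across $\{y = 0\}$. The plan is then to work with the conjugate function $W \coloneqq y^b \partial_y w$: after an odd reflection across $\{y = 0\}$, $W$ satisfies a divergence-form equation with the dual weight $|y|^{-b} \in A_2$. Applying the first part of the theorem to $W$ (with $\gamma_2$ entering as a Dirichlet-type trace) together with the H\"older regularity of the tangential derivatives $\partial_{x_i} w$ (obtained by differentiating horizontally and reusing the first part) yields the full $C^{1, \alpha}$ estimate with exponent $\alpha \in (0, -b - n/q]$.

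The main obstacle lies in arranging the correct scaling in the Campanato iteration: under $X \mapsto X_0 + \rho X$, the weighted energy and the norm $\|\gamma_2\|_{L^q}$ transform with different powers of $\rho$, so balancing them to extract the sharp exponent $1 - b - n/q$ demands the precise trace inequality of \Cref{TI} together with a tight use of H\"older's inequality in the boundary integral. For the $C^{1, \alpha}$ case, the duality argument via $W = y^b \partial_y w$ is delicate because the odd extension is only legitimate when $W$ has sufficient regularity up to $\{y = 0\}$; justifying it at the level of weak solutions requires the Sobolev regularity of \Cref{FS-reg} and a careful approximation procedure, which is one reason the $C^{1, \alpha}$ statement is restricted to the singular regime $b < 0$, where $|y|^{-b}$ is degenerate rather than singular and $W$ is naturally continuous across the boundary.
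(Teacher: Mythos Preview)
The paper does not prove this theorem: immediately after the statement it writes ``For a proof, see Theorem~1.5 and Theorem~1.6 in \cite{MR4207950}'' and moves on. So there is no argument in the paper to compare your proposal against; the result is simply quoted from the literature.

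That said, your outline is broadly in the spirit of how such Schauder-type estimates for $\mathcal{L}_b$ with Neumann data are established in the references the paper invokes: even reflection to turn the problem into a two-sided one with weight $|y|^b \in A_2$, a baseline H\"older estimate from the Fabes--Kenig--Serapioni theory, a harmonic replacement $w = h_\rho + \psi_\rho$ at each scale, and a Campanato iteration to reach the sharp exponent. For the $C^{1,\alpha}$ part, the duality trick via $W = y^b\partial_y w$ with the conjugate weight $|y|^{-b}$ (together with horizontal differentiation for $\partial_{x_i} w$) is exactly the standard device.

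A couple of technical caveats if you were to turn this into a full proof. First, the energy bound you write for $\psi_\rho$ with exponent $2(1-b-n/q)$ does not follow from the trace inequality \Cref{TI} alone: to get the correct power of $\rho$ you need the boundary Sobolev inequality \eqref{S'E}, and the Campanato characterization must be carried out with the weighted homogeneous dimension (roughly $n+1+b$), not the Euclidean one. Second, in the $C^{1,\alpha}$ step your description of $\gamma_2$ as ``entering as a Dirichlet-type trace'' for $W$ is not quite right: after odd reflection, $W$ solves a divergence-form equation with weight $|y|^{-b}$ whose right-hand side is a measure supported on $\{y=0\}$ built from $\gamma_2$, not a Dirichlet datum. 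These are refinements rather than gaps, but they matter for extracting the precise exponents $1-b-n/q$ and $-b-n/q$.
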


We mention here that H\"older estimates for the conormal derivative can be obtained under additional assumptions on $w$ and $\gamma_2$. We report below the precise statement that we use in the following sections, and refer the reader to Lemma 4.5 in \cite{MR3165278} for the proof. 

\begin{lem}
\label{4.5CS}
Let $w \in L^{\infty}(B_R^+) \cap H^1(B_R^+; y^b)$ be a weak solution to \eqref{2nd} with $\gamma_1 = 0$ and $\gamma_2 \in C^{0, \sigma}(B_R')$ with $\sigma \in (0, 1)$. Then, for all $r \in (0, R)$ there exists $\beta \in (0, \sigma)$, $\beta = \beta(n, b, \sigma)$, such that $\partial_y^b w \in C^{0, \beta}(B_r^+{\cup B'_r})$. Moreover, 
\[
\|\partial_y^b w\|_{C^{0, \beta}({B_r^+\cup B'_r})} \le C,
\]
where $C$ is a constant that depends only on $n, b, r, R, \|w\|_{L^{\infty}(B_R^+)}$, and $\| \gamma_2 \|_{C^{0, \sigma}(B_R')}$.
\end{lem}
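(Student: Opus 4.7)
The strategy I would follow rests on the Caffarelli--Silvestre conjugate equation: if $w$ is $\mathcal{L}_b$-harmonic, then $v := \partial_y^b w = y^b \partial_y w$ is $\mathcal{L}_{-b}$-harmonic, and the Neumann datum for $w$ becomes a Dirichlet datum for $v$. Since $-b \in (-1, 1)$, the weight $y^{-b}$ is itself in the Muckenhoupt class $A_2$, so the full weighted regularity theory recalled in \Cref{reg-subsec} remains at our disposal for the dual equation. The plan is therefore to transfer the problem to a Dirichlet problem for $\mathcal{L}_{-b}$ with H\"older-continuous boundary data, and then to invoke standard boundary oscillation--decay arguments.

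First, I would make the duality rigorous. By \Cref{FS-reg} applied with $\gamma_1 = 0$ and (since $C^{0,\sigma}(B_R') \hookrightarrow W^{1, 2n/(n+1-b)}(B_R')$ locally after a mollification of $\gamma_2$) we have $w \in H^2_x(B_{R'}^+; y^b)$ and $v \in H^1(B_{R'}^+; y^{-b})$ for every $R' < R$, giving enough regularity to differentiate in a distributional sense. Expanding $\mathcal{L}_b(w) = 0$ as $y^b \Delta_x w + \partial_y v = 0$, one computes directly
\[
\operatorname{div}(y^{-b}\nabla v) \;=\; y^{-b}\Delta_x v + \partial_y(y^{-b}\partial_y v) \;=\; \partial_y \Delta_x w - \partial_y \Delta_x w \;=\; 0 \qquad \text{in } B_R^+,
\]
with trace $v|_{B_R'} = \gamma_2$ by definition of $\partial_y^b w$ and \eqref{2nd}. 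The next step is to obtain an $L^\infty$ bound on $v$ near the flat boundary. This would follow from weighted De Giorgi/Moser estimates for $\mathcal{L}_{-b}$ with bounded Dirichlet data $\gamma_2$, together with the hypothesis $w \in L^\infty(B_R^+)$, which gives control of the Dirichlet energy of $v$ on interior slices and forces $v$ to be controlled by its boundary values through the maximum principle for the dual equation.

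Once $v \in L^\infty(B_{R'}^+)$, I would prove H\"older continuity up to $B_r'$ by a Campanato-type oscillation argument centered at each point $x_0 \in B_r'$. Subtracting the constant $\gamma_2(x_0, 0)$, the function $V := v - \gamma_2(x_0, 0)$ solves $\mathcal{L}_{-b} V = 0$ in $B_{R'}^+$ with Dirichlet datum on $B_{R'}'$ satisfying $|V(x,0)| \le [\gamma_2]_{C^{0,\sigma}} |x-x_0|^\sigma$. Iterating the boundary oscillation estimate for $\mathcal{L}_{-b}$ (available from the Krylov--Safonov/De Giorgi--Nash--Moser theory of Fabes--Kenig--Serapioni, or equivalently from \Cref{STV1.5} applied to the dual equation after an even-reflection trick to convert Dirichlet data into a right-hand side) on dyadic balls around $(x_0,0)$ yields $\operatorname{osc}_{B_\rho^+(x_0,0)} v \le C\rho^\beta$ for some $\beta \in (0,\sigma)$. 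Combined with the interior $\mathcal{L}_{-b}$-regularity of $v$ away from $\{y=0\}$, this gives $v \in C^{0,\beta}(B_r^+)$ with the stated estimate.

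The principal obstacle is the boundary H\"older decay in the third step: one has to produce a quantitative oscillation decay at a precise rate $\beta = \beta(n,b,\sigma) \in (0,\sigma)$ for the weighted Dirichlet problem, and the exponent cannot simply be taken equal to $\sigma$ because it is additionally constrained by the intrinsic boundary regularity exponent for $\mathcal{L}_{-b}$. The natural way to fit everything together is to split the oscillation of $V$ on $B_\rho^+(x_0,0)$ into a contribution from the harmonic replacement with constant boundary data (which decays at the intrinsic rate for $\mathcal{L}_{-b}$) and a contribution controlled linearly by $[\gamma_2]_{C^{0,\sigma}} \rho^\sigma$; balancing these two terms in the dyadic iteration produces the constrained exponent $\beta < \sigma$ predicted by the statement.
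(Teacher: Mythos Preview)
The paper does not actually prove this lemma: it simply cites Lemma~4.5 of Caffarelli--Stinga \cite{MR3165278}. Your strategy---pass to the conjugate function $v=y^b\partial_y w$, observe that $\mathcal{L}_{-b}(v)=0$ with Dirichlet datum $v=\gamma_2$ on $B_R'$, and then run a boundary oscillation-decay argument for the weighted Dirichlet problem---is precisely the approach taken in that reference, so your outline is on the right track and matches the intended route.

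One technical point deserves tightening. Your appeal to \Cref{FS-reg} to justify $v\in H^1(B_{R'}^+;y^{-b})$ requires $\gamma_2\in W^{1,2n/(n+1-b)}$, which a merely $C^{0,\sigma}$ function need not satisfy; the ``mollification'' remark does not fix this, since mollifying changes the boundary datum. A cleaner way is to derive the conjugate equation directly in weak form: test $\mathcal{L}_b(w)=0$ against $y^{-b}\partial_y\phi$ for suitable $\phi$ and integrate by parts to obtain the weak formulation for $v$ without needing second-order regularity of $w$. This is how the duality is typically handled and avoids the Sobolev-regularity detour. The remaining steps---local boundedness of $v$ (e.g.\ via the De~Giorgi--Nash--Moser theory of Fabes--Kenig--Serapioni for the $A_2$ weight $y^{-b}$ combined with the bound on the Dirichlet data) and the Campanato iteration balancing the intrinsic boundary exponent against $\sigma$---are as you describe.
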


Next, let $w \in H^1(B_R^+; y^b)$ be a weak solution to \eqref{2nd} with $\gamma_2 = 0$. Then, if $w^* \in H^1(B_R; |y|^b)$ and $\gamma_1^* \in L^2(B_R; |y|^b)$ denote the extensions of $w$ and $\gamma_1$, respectively, obtained by an even reflection across the hyperplane $\{y = 0\}$, we have that $w^*$ is a weak solution to $\mathcal{L}_b(w^*) = |y|^b \gamma_1^*$ in $B_R$, in the sense that 
\begin{equation}
\label{weak-even}
\int_{B_R} |y|^b \nabla w^* \cdot \nabla \phi\,dX + \int_{B_R} |y|^b \gamma_1^* \phi\,dX = 0
\end{equation}
for all $\phi \in H^1(B_R; |y|^b)$ with $\phi = 0$ on $\partial B_R$. We say that $h \in L^2(B_R;|y|^b)$ is even in $y$ if $h(x,y) = h(x, -y)$ for $\mathcal{L}^{n + 1}$-a.e.\@ $X \in B_R$.

\begin{thm}
\label{STV}
For $k \in \NN \cup \{0\}$ and $\alpha \in (0, 1)$, let $\gamma \in L^2(B_R, |y|^b)$ and $w \in H^1(B_R; |y|^b)$ be even in $y$ and assume that $w$ is a weak solution to $\mathcal{L}_b(w) = |y|^b \gamma$ in $B_R$ in the sense of \eqref{weak-even}. Then, for every $r \in (0, R)$, the following hold:
\begin{itemize}
\item[$(i)$] If $\gamma \in C^{k, \alpha}(B_R)$ then $w \in C^{k + 2, \alpha}(B_r)$. If moreover $\gamma \in C^{\infty}(B_R)$, then $w \in C^{\infty}(B_R)$.
\item[$(ii)$] If $\gamma \in L^q(B_R; |y|^b)$ with $q > (n + 1 + b^+)/2$ and $\alpha \in (0, 1) \cap (0, 2 - (n + 1 + b^+)/q]$ then $w \in C^{0, \alpha}(B_r)$ and there exists a constant $C = C(n, b, R, r, q, \alpha)$ such that 
\[
\|w\|_{C^{0, \alpha}(B_r)} \le C \left(\|w\|_{L^2(B_R; |y|^b)} + \|\gamma\|_{L^q(B_R; |y|^b)}\right).
\]
\item[$(iii)$] If $\gamma \in L^q(B_R; |y|^b)$ with $q > n + 1 + b^+$ and $\alpha \in (0, 1 - (n + 1 + b^+)/q]$ then $w \in C^{1, \alpha}(B_r)$ and there exists a constant $C = C(n, b, R, r, q, \alpha)$ such that 
\[
\|w\|_{C^{1, \alpha}(B_r)} \le C \left(\|w\|_{L^2(B_R; |y|^b)} + \|\gamma\|_{L^q(B_R; |y|^b)}\right).
\]
\end{itemize}
\end{thm}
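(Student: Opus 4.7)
The plan is to reduce the full-ball problem to an equivalent Neumann problem on the half-ball and then invoke the Schauder and Morrey-type estimates for $A_2$-weighted operators from \cite{MR4207950} (and the classical De Giorgi–Nash–Moser theory of Fabes–Kenig–Serapioni). First I would verify that since $w$ and $\gamma$ are even in $y$, the restriction $\tilde w \coloneqq w|_{B_R^+}$ is a weak solution of $\mathcal{L}_b(\tilde w) = y^b \gamma$ in $B_R^+$ with homogeneous Neumann datum $\partial_y^b \tilde w = 0$ on $B_R'$, in the sense of \Cref{ws2-def}. This reduction follows by testing the full-ball formulation \eqref{weak-even} against the even extension of any $\phi \in H^1_{0,S_R^+}(B_R^+; y^b)$, and conversely extending any Neumann solution by even reflection produces a weak solution of the full-ball equation.

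For parts $(ii)$ and $(iii)$, the asserted $L^q \to C^{0,\alpha}$ and $L^q \to C^{1,\alpha}$ estimates are direct consequences of the local regularity results for degenerate/singular elliptic operators with weight in the Muckenhoupt class $A_2$. The integrability thresholds $q > (n + 1 + b^+)/2$ and $q > n + 1 + b^+$ reflect the weighted volume growth $\int_{B_r} |y|^b\,dX$ in the definition of the relevant Morrey spaces; these are the exact scaling thresholds under which the Stampacchia/Campanato embedding yields continuous, respectively differentiable, representatives. Concretely, the base estimate (the case $k=0$, $\alpha = 0$ of a Hölder modulus of continuity) comes from Moser iteration in the weighted setting, while the precise Hölder exponents and the $C^{1,\alpha}$ version follow from Theorem 1.6 of \cite{MR4207950} applied to $\tilde w$, whose smallness of normal trace ($\partial_y^b \tilde w = 0$) is the ideal boundary regime for those estimates.

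For part $(i)$, the strategy is a bootstrap on $k$. The base case $k = 0$ follows by combining $(iii)$ with the Schauder estimates of \cite{MR4207950} for equations with $C^{0,\alpha}$ data. For the inductive step, the key structural observation is that because the weight $|y|^b$ depends only on $y$, differentiation in any horizontal direction commutes with the operator: each $\partial_{x_i} w$ is again even in $y$ and satisfies $\mathcal{L}_b(\partial_{x_i} w) = |y|^b\, \partial_{x_i}\gamma$ weakly, with $\partial_{x_i} \gamma \in C^{k-1,\alpha}$. The inductive hypothesis then delivers $\partial_{x_i} w \in C^{k+1, \alpha}(B_r)$ for each $i = 1, \dots, n$. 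For the vertical direction, rewrite the equation on $\{y \neq 0\}$ as
\begin{equation*}
\partial_{yy} w = \gamma - \Delta_x w - \frac{b}{y}\, \partial_y w,
\end{equation*}
and note that evenness of $w$ forces $\partial_y w(x, 0) = 0$, whence the Taylor representation $\partial_y w(x, y) = y \int_0^1 \partial_{yy} w(x, t y)\,dt$ shows that $(1/y)\partial_y w$ extends across $\{y = 0\}$ with the same regularity as $\partial_{yy} w$. Closing a Gronwall-style loop on this identity upgrades the vertical regularity to $C^{k+1,\alpha}$ as well, completing the induction. The $C^\infty$ statement follows by iterating the above indefinitely.

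The main obstacle is the vertical regularity in part $(i)$: the explicit coefficient $b/y$ is singular at $\{y = 0\}$, and only the even symmetry of $w$ rescues the equation by making the product $(1/y)\partial_y w$ genuinely smooth. This is precisely why interior classical Schauder theory must be supplemented near the hyperplane by the weighted Schauder estimates of \cite{MR4207950}, and why the even-symmetry hypothesis cannot be dropped without altering the conclusion.
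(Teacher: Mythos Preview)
The paper does not actually prove this theorem: immediately after the statement it simply writes ``For a proof of \Cref{STV} (including the case of more degenerate weights $|y|^b$, $b \ge 1$), see Theorem 1.1 and Theorem 1.2 in \cite{MR4207950}. The H\"older continuity of solutions for general $A_2$ weights was previously obtained in the pioneering work \cite{MR0643158}.'' In other words, \Cref{STV} is quoted from the literature, not established here.

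Your proposal goes considerably further than the paper by sketching an actual argument: the even-reflection reduction to a half-ball Neumann problem, the Morrey/Campanato thresholds for $(ii)$--$(iii)$, and the horizontal-differentiation bootstrap plus the $(1/y)\partial_y w$ trick for the vertical regularity in $(i)$. This is a reasonable outline and is in the spirit of how \cite{MR4207950} proceeds, but for the purposes of matching the paper it suffices to cite Theorems 1.1 and 1.2 of \cite{MR4207950} directly, as the authors do. One small inaccuracy: you point to Theorem 1.6 of \cite{MR4207950} for $(ii)$--$(iii)$, whereas the paper attributes the full-ball, even-in-$y$ statement to Theorems 1.1 and 1.2 there (Theorems 1.5--1.6 are what the paper invokes earlier for the half-ball Neumann problem in \Cref{STV1.5}).
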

For a proof of \Cref{STV} (including the case of more degenerate weights $|y|^b$, $b \ge 1$), see Theorem 1.1 and Theorem 1.2 in \cite{MR4207950}. The H\"older continuity of solutions for general $A_2$ weights was previously obtained in the pioneering work \cite{MR0643158}.

\section{Existence of weak solutions}
\label{Ex-sec}
We begin by introducing the assumptions utilized throughout the section. Following this, we give the definition of a weak solution to \eqref{bil} and proceed to establish existence by employing the direct method in the calculus of variations.

In the following we let, for $\Delta_b w$ as in \eqref{b_lap},
\[
\Hh \coloneqq \{w \in H^1(B_1^+; y^b) : \Delta_b w \in L^2(B_1^+; y^b)\}
\]
and set 
\[
\| w \|_{\Hh} \coloneqq \left(\int_{B_1^+} y^b\left(|w|^2 + |\nabla w|^2 + |\Delta_b w|^2\right)\,dX\right)^{1/2}.
\]
Let $p \ge 1$. Next, we specify the assumptions on the boundary datum $g$ and the nonlinearity $f$.
\begin{enumerate}[label=(H.\arabic*), ref=H.\arabic*]
\item \label{H1} Assume that $g \in \Hh \cap L^p(B_1')$ satisfies $\partial_y^b g = 0$ $B_1'$.
\item \label{H2} Assume that $f \colon B_1' \times \RR \to \RR$ is Carath\'eodory and satisfies 
\[
|f(x, \zeta)| \le C(1 + |\zeta|^{p - 1})
\] 
for $\mathcal{L}^{n}$-a.e.\@ $x \in B_1'$ and all $\zeta \in \RR$.
\item \label{H3} Let 
\[
F(x, \zeta) \coloneqq -2 \int_0^\zeta f(x, \sigma)d\sigma.
\]
Assume 
that there is a positive constant $c$ such that
\[
F(x, \zeta) \ge c|\zeta|^p
\]
for $\mathcal{L}^{n}$-a.e.\@ $x \in B_1'$ and all $\zeta \in \RR$.
\end{enumerate}

\begin{definition}
\label{ws-def}
We say that $u \in \Hh \cap L^p(B_1')$ is a weak solution of 
\begin{equation}
\label{bil-noBC}
\left\{
\arraycolsep=1.4pt\def\arraystretch{1.6}
\begin{array}{rll}
\Delta_b^2 u = & 0 & \text{ in } B_1^+, \\
\partial_y^b u = & 0 & \text{ on } B_1', \\
\partial_y^b \Delta_b u = & f(\cdot, u) &  \text{ on } B_1',
\end{array}
\right.
\end{equation} 
if
\begin{equation}
\label{wpde}
\int_{B_1^+} y^b \Delta_b u \Delta_b \phi\,dX = \int_{B_1'} f(x, u) \phi\,dx
\end{equation}
for all $\phi \in \Hh_{0, S_1^+} \cap L^p(B_1')$, where
\[
\Hh_{0, S_1^+} \coloneqq \overline{\left\{\phi \in C^{\infty}_c(\overline{B_1^+} \setminus \overline{S_1^+}) : \partial_y^b \phi = 0 \text{ on } B_1' \right\}}^{\| \cdot \|_{\Hh}}.
\] 
\end{definition}

\begin{rmk}
\label{weakLP}
\begin{itemize}
\item[$(i)$] Observe that the integral on the right-hand side of \eqref{wpde} is well-defined for all admissible test functions. Indeed, in view of \eqref{H2}, an application of H\"older's inequality yields
\[
\left|\int_{B_1'} f(x, u) \phi\,dx\right| \le C\left(1 + \|u\|_{L^p(B_1')}^{p - 1}\right) \|\phi\|_{L^p(B_1')}.
\]
\item[$(ii)$] We say that $u$ solves \eqref{bil} if in addition to being a solution to \eqref{bil-noBC} in the sense of \Cref{ws-def}, it satisfies $u = g$ and $\partial_{\nu} u = \partial_{\nu} g$ on $S_1^+$ in the sense of traces.
\end{itemize}
\end{rmk}

Next, we consider the associated energy
\begin{equation}
\label{J-def}
\J(w) \coloneqq \int_{B_1^+} y^b|\Delta_b w|^2\,dX +  \int_{B_1'} F(x, w)\,dx, 
\end{equation}
defined over the class
\begin{equation}
\label{A-def}
\mathcal{A} \coloneqq \{w \in \Hh : w = g \text{ on } S_1^+, \partial_{\nu}w = \partial_{\nu}g \text{ on } S_1^+, \text{ and } \partial_y^b w = 0 \text{ on } B_1'\}.
\end{equation} 

\begin{thm}
\label{MIN}
Under the assumptions \eqref{H1}--\eqref{H3}, let $\J$ and $\mathcal{A}$ be given as in \eqref{J-def} and \eqref{A-def}, respectively. Then $\J$ admits a global minimizer $u \in \mathcal{A}$. Moreover, $u$ is a weak solution to \eqref{bil} in the sense of \Cref{ws-def} and \Cref{weakLP} $(ii)$.
\end{thm}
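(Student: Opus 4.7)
The plan is the direct method of the calculus of variations. The class $\mathcal{A}$ is non-empty since $g \in \mathcal{A}$ by \eqref{H1}, and \eqref{H3} ensures $F(x, \zeta) \ge c|\zeta|^p \ge 0$, whence $\J \ge 0$ on $\mathcal{A}$ and $m \coloneqq \inf_{\mathcal{A}} \J$ is finite. For a minimizing sequence $\{u_k\} \subset \mathcal{A}$, the bound on $\J(u_k)$ immediately controls $\|\Delta_b u_k\|_{L^2(B_1^+; y^b)}^2$ and, via \eqref{H3}, $\|u_k\|_{L^p(B_1')}^p$.

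The first substantive task is coercivity, i.e., upgrading these two bounds to a uniform bound on $\|u_k\|_{\Hh}$. Writing $u_k = g + \psi_k$ with $\psi_k \in \Hh_{0, S_1^+}$, one has $\Delta_b \psi_k = \Delta_b u_k - \Delta_b g$, which is uniformly bounded in $L^2(B_1^+; y^b)$. By the density in the definition of $\Hh_{0, S_1^+}$ and continuity of the trace from $\Hh$ to $L^2(S_1^+; y^b)$ provided by \Cref{TO}, each $\psi_k$ has vanishing trace on $S_1^+$ and satisfies $\partial_y^b \psi_k = 0$ on $B_1'$. Approximating in the $\Hh$-norm by smooth functions with compact support in $\overline{B_1^+} \setminus \overline{S_1^+}$ and passing to the limit, one obtains
\[
\int_{B_1^+} y^b |\nabla \psi_k|^2\,dX = -\int_{B_1^+} y^b \psi_k \Delta_b \psi_k\,dX,
\]
with no boundary contribution on $S_1^+$ (trace vanishes) nor on $B_1'$ ($\partial_y^b \psi_k$ vanishes). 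Combining Young's inequality with the Poincar\'e bound of \Cref{PI} (whose $S_1^+$-term vanishes) absorbs $\|\psi_k\|_{L^2(B_1^+; y^b)}$ and yields $\|\psi_k\|_{\Hh} \le C \|\Delta_b \psi_k\|_{L^2(B_1^+; y^b)}$. Hence $\|u_k\|_{\Hh} \le C(\|g\|_{\Hh} + \|\Delta_b u_k\|_{L^2(B_1^+; y^b)})$ is uniformly bounded, and a subsequence (not relabelled) converges weakly in $\Hh$ and in $L^p(B_1')$ to some $u$.

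It remains to verify $u \in \mathcal{A}$, establish weak lower semicontinuity of $\J$, and derive the Euler--Lagrange equation. Encoding the defining conditions of $\mathcal{A}$ as $u_k - g \in \Hh_{0, S_1^+}$, weak closedness of this subspace (it is a norm closure) gives $u - g \in \Hh_{0, S_1^+}$, so $u \in \mathcal{A}$. The Dirichlet term of $\J$ is a continuous seminorm on $\Hh$ and hence weakly lower semicontinuous. For the boundary term, the compactness of the trace in \Cref{TO} gives $u_k \to u$ strongly in $L^2(B_1')$ and, along a further subsequence, pointwise $\mathcal{L}^n$-a.e.\@ on $B_1'$; since $F \ge 0$ and $F(x, \cdot)$ is continuous thanks to \eqref{H2}, Fatou's lemma yields $\int_{B_1'} F(x, u)\,dx \le \liminf_k \int_{B_1'} F(x, u_k)\,dx$. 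Adding the two contributions gives $\J(u) \le \liminf_k \J(u_k) = m$, so $u$ is a minimizer. For the Euler--Lagrange equation, I test with $u + \epsilon \phi$, $\phi \in \Hh_{0, S_1^+} \cap L^p(B_1')$; the quadratic term differentiates at once, while the growth bound \eqref{H2} combined with H\"older's inequality produces an integrable majorant for the difference quotient $\epsilon^{-1}(F(x, u+\epsilon\phi) - F(x,u))$ on $|\epsilon| \le 1$, justifying dominated convergence and yielding the derivative $-2\int_{B_1'} f(x, u)\phi\,dx$. The stationarity condition $\tfrac{d}{d\epsilon}\J(u+\epsilon\phi)|_{\epsilon=0} = 0$ then reduces to exactly \eqref{wpde}. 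The principal obstacle is coercivity, since $\J$ directly sees only $\Delta_b u_k$ in $L^2(y^b)$ and $u_k$ in $L^p(B_1')$, and recovering the full $\Hh$-norm hinges on exploiting the boundary conditions encoded in $\Hh_{0, S_1^+}$ to eliminate all boundary terms in the integration-by-parts estimate.
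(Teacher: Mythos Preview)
Your proof is correct and follows essentially the same approach as the paper's: coercivity via Poincar\'e plus integration by parts on the difference of two admissible competitors, weak lower semicontinuity via Fatou for the boundary term, and the Euler--Lagrange equation by a first variation with dominated convergence. The only cosmetic difference is that you subtract $g$ from the minimizing sequence while the paper subtracts the first element $w_1$ of the sequence; both choices kill the $S_1^+$ boundary terms in exactly the same way.
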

\begin{proof}
The proof follows by standard variational methods. We include here the details for the reader's convenience. 
\newline
\textbf{Step 1:} Observe that \eqref{H1} implies that $g \in \mathcal{A}$. Moreover, using the fact that $F(x, \zeta) \le C(1 + |\zeta|^p)$ (see \eqref{H2} and \eqref{H3}), we can also conclude that $\J(g) < \infty$. Let $\{w_j\}_j \subset \mathcal{A}$ be an infimizing sequence. Then, for every $j$, by \eqref{PI} we have that 
\begin{equation}
\label{poinc-1}
(n + b) \int_{B_1^+} y^b|w_j - w_1|^2\,dX \le \int_{B_1^+} y^b |\nabla (w_j - w_1)|^2\,dX.
\end{equation}
By the divergence theorem and H\"older's inequality, we obtain that
\begin{align}
\int_{B_1^+} y^b |\nabla (w_j - w_1)|^2\,dX & = \int_{B_1^+} y^b(w_j - w_1)\Delta_b(w_j - w_1)\,dX \notag \\ 
& \le \left(\int_{B_1^+}y^b|w_j - w_1|^2\,dX\right)^{1/2} \left(\int_{B_1^+} y^b |\Delta_b(w_j - w_1)|^2\,dX\right)^{1/2}. \label{grad-1-est}
\end{align}
Combining \eqref{poinc-1} and \eqref{grad-1-est} we conclude that 
\[
\|w_j - w_1\|_{H^1(B_1^+; y^b)}^2 \le C(n, b) \int_{B_1^+}y^b|\Delta_b(w_j - w_1)|^2\,dX \le C(n, b) \J(g),
\]
where the last inequality follows by recalling that $F$ is nonnegative and by noticing that, without loss of generality, we can assume that $\J(w_j) \le \J(g)$ for all $j$. Consequently, there exists a constant $C = C(n, b)$ such that
\[
\| w_j - w_1\|_{\Hh} \le C \J(g)^{1/2}.
\]
By standard compactness arguments together with \Cref{TO}, eventually extracting a subsequence (which we do not relabel), we can find $u \in \mathcal{A}$ such that $w_j \rightharpoonup u$ in $H^1(B_1^+; y^b)$, $\Delta_b w_j \rightharpoonup \Delta_b u$ in $L^2(B_1^+; y^b)$, and $w_j \to u$ in $L^2(B_1')$ and pointwise a.e.\@ on $B_1'$. In particular, we also obtain that $F(\cdot, w_j(\cdot)) \to F(\cdot, u(\cdot))$ pointwise a.e.\@ on $B_1'$. Since by Fatou's lemma $\J$ is weakly lower-semicontinuous with respect to the combination of these notions of convergence (see \eqref{H3}), this shows that $u$ is a global minimizer for $\J$ over $\mathcal{A}$. 
\newline
\textbf{Step 2:} We now proceed to show that $u$ is a weak solution to \eqref{bil}. To this end, observe that by \eqref{H3} we have that
\[
c\|u\|_{L^p(B_1')}^p \le \J(u) < \infty,
\]
thus proving that $u \in L^p(B_1')$. 
Finally, as one can readily check, for every $\phi \in C^{\infty}_c(\overline{B_1^+} \setminus \overline{S_1^+})$ with $\partial_y^b \phi = 0$ on $B_1'$, we have that $u + \e \phi \in \mathcal{A}$. For $\e > 0$, the minimality of $u$ implies that
\begin{equation}
\label{DQ-EL}
0 \le \frac{\J(u + \e \phi) - \J(u)}{\e} = \int_{B_1^+} y^b(2 \Delta_b u \Delta_b \phi + \e |\Delta_b \phi|^2)\,dX + \int_{B_1'} \frac{F(x, u + \e \phi) - F(x, u)}{\e}\,dx.
\end{equation}
By Lebesgue's dominated convergence theorem, letting $\e \to 0^+$ in the previous inequality shows that 
\[
0 \le \int_{B_1^+} y^b\Delta_b u \Delta_b \phi\,dX - \int_{B_1'}f(x, u)\phi\,dx.
\]
To conclude the proof, it is enough to observe that repeating the argument with $-\phi$ yields the opposite inequality.
\end{proof}

We remark that if in addition we have that $\zeta \mapsto F(x,\zeta)$ is convex for $\mathcal{L}^{n}$-a.e.\@ $x \in B_1'$, then $\J$ is strictly convex over the closed, convex set $\mathcal{A}$. This, in turn, implies that the minimizer of \Cref{MIN} is unique. Notice that when $f$ is given as in \eqref{f-def}, one has that
\[
F(x, \zeta) = \frac{2}{p} (\lambda_{-}(\zeta^-)^{p} + \lambda_{+}(\zeta^+)^{p}),
\]
which is indeed convex with respect to $\zeta$.

\section{Regularity of solutions}
\label{Reg-sec}
This section is dedicated to proving regularity results for $u$ and $\Delta_b u$. We begin by showing that the fourth-order problem \eqref{bil-noBC} can be recast as a system of coupled second-order equations for the operator $\mathcal{L}_b$ (see \Cref{v-solves}). Using this reformulation, we employ a combination of a Moser--Trudinger iteration scheme and a bootstrapping argument to establish our main regularity result, as outlined in \Cref{Lq-reg}, \Cref{Reg-cor} and \Cref{improved-reg}.

\begin{lem}
\label{aux-2nd}
For every $\psi \in C^{\infty}_c(B_1^+)$ and $R < 1$, there exists $\Psi \in \Hh \cap C^{2, \alpha}(B_R^+)$ such that
\[
\left\{
\arraycolsep=1.4pt\def\arraystretch{1.6}
\begin{array}{rll}
\Delta_b \Psi = & \psi & \text{ in } B_1^+, \\
\Psi = & 0 & \text{ on } S_1^+, \\
\partial_y^b \Psi = & 0 & \text{ on } B_1'.
\end{array}
\right.
\]
\end{lem}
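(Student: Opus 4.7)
The plan is to realize $\Psi$ as the minimizer of a Dirichlet-type energy associated to the second-order operator $\mathcal{L}_b$, and then to upgrade the regularity by combining the results of \Cref{reg-subsec} with an even reflection argument made possible by the fact that $\supp \psi \subset\subset B_1^+$. Concretely, I would minimize
\[
E(\Phi) \coloneqq \int_{B_1^+} y^b |\nabla \Phi|^2 \, dX + 2 \int_{B_1^+} y^b \psi \Phi \, dX
\]
over the convex class $\mathcal{B} \coloneqq \{\Phi \in H^1(B_1^+; y^b) : \Phi = 0 \text{ on } S_1^+\}$, which is non-empty as it contains $0$.

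Existence of a minimizer $\Psi \in \mathcal{B}$ follows from the direct method. Applying \Cref{PI} to elements of $\mathcal{B}$ (whose boundary integral on $S_1^+$ vanishes) controls $\|\Phi\|_{L^2(B_1^+; y^b)}$ by the Dirichlet part of $E$; since $\psi$ is bounded with compact support in $B_1^+$, the linear term is handled by Cauchy--Schwarz, giving coercivity, while convexity yields weak lower semicontinuity. Testing first variations against $\phi \in H^1_{0, S_1^+}(B_1^+; y^b)$ produces the Euler--Lagrange identity
\[
\int_{B_1^+} y^b \nabla \Psi \cdot \nabla \phi \, dX + \int_{B_1^+} y^b \psi \phi \, dX = 0,
\]
which is the weak formulation of \eqref{2nd} with $\gamma_1 = \psi$ and $\gamma_2 = 0$, so $\Delta_b \Psi = \psi$ in $B_1^+$ and $\partial_y^b \Psi = 0$ on $B_1'$ hold weakly. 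Since $\psi \in C_c(B_1^+) \subset L^\infty(B_1^+)$, one has $\Delta_b \Psi \in L^2(B_1^+; y^b)$ and hence $\Psi \in \mathcal{H}$.

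For the $C^{2, \alpha}(B_R^+)$ regularity, the key point is that $\supp \psi$ is a compact subset of the open set $B_1^+$, so $\psi$ vanishes in an open neighborhood $U$ of the flat face $B_1'$ (and of $S_R^+$, for any $R < 1$). In a slab $U \cap \{0 \le y < \delta\}$ where $\psi = 0$, the homogeneous Neumann condition $\partial_y^b \Psi = 0$ allows me to extend $\Psi$ by even reflection across $\{y = 0\}$ to obtain $\Psi^*$ weakly solving $\mathcal{L}_b \Psi^* = 0$ in a full neighborhood of $B_1'$ inside $B_1$; \Cref{STV}$(i)$ with $\gamma \equiv 0$ then yields smoothness, in particular $C^{2,\alpha}$. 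In the complementary region, where $y$ is bounded away from zero, the weight $y^b$ is smooth, the operator $\mathcal{L}_b$ is uniformly elliptic, and standard Calder\'on--Zygmund and Schauder theory applied to $\mathcal{L}_b \Psi = y^b \psi$ delivers the interior regularity. These two estimates patch together on $B_R^+$.

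The main obstacle is controlling behavior at the degenerate or singular hyperplane $\{y = 0\}$, where naive application of classical Schauder theory is not available because of the $A_2$-weight. What rescues the argument is precisely the hypothesis $\psi \in C_c(B_1^+)$: the delicate boundary behavior occurs exactly where the forcing vanishes, reducing matters near $B_1'$ to the homogeneous problem addressed by the reflection argument and \Cref{STV}. Were $\psi$ allowed to be non-zero up to $B_1'$, one would instead have to invoke the full boundary Schauder machinery of \cite{MR4207950} to propagate H\"older regularity across the weighted boundary.
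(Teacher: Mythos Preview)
Your proposal is correct and follows essentially the same approach as the paper: variational existence via the direct method (the paper alludes to ``a similar (but simpler) argument'' to \Cref{MIN}), followed by an appeal to \Cref{STV} for the regularity. The paper is terser on the regularity step, presumably applying \Cref{STV} directly to the global even extension $\Psi^*$ solving $\mathcal{L}_b \Psi^* = |y|^b \psi^*$ on all of $B_R$ (which works since $\psi^* \in C^{k,\alpha}$ whenever $\psi$ is, as $\supp \psi$ avoids $\{y=0\}$), whereas you split into a near-boundary region and an interior region---both routes lead to the same conclusion, though note that for $\psi$ merely continuous (rather than H\"older) neither Schauder theory nor \Cref{STV}$(i)$ strictly yields $C^{2,\alpha}$; this is harmless since the lemma is only invoked for $\psi \in C^\infty_c(B_1^+)$ in the proof of \Cref{LM}.
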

\begin{proof}
As one can readily check, the existence of a weak solution follows from a similar (but simpler) argument to that in the previous section; therefore, we omit the details. The regularity of $\Psi$ is a direct consequence of \Cref{STV}.
\end{proof}

\begin{thm}
\label{LM}
Let $u$ be a weak solution to \eqref{bil-noBC} in the sense of \Cref{ws-def}. Then $\Delta_b u \in H^1(B_r^+; y^b)$ for $r < 1$.
\end{thm}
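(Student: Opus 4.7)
My strategy is to recognize $v := \Delta_b u$ as the weak solution of a second-order Neumann problem for $\mathcal{L}_b$ (namely, $\mathcal{L}_b v = 0$ in $B_1^+$ with $\partial_y^b v = -f(\cdot, u)$ on $B_1'$, in a distributional sense built into the weak formulation \eqref{wpde}), and to match $v$ with an auxiliary $H^1$-competitor produced by Lax--Milgram. The two are then identified through the duality afforded by \Cref{aux-2nd}, from which the claimed $H^1$-regularity of $v$ follows automatically.

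Fix $R \in (r, 1)$ such that $v|_{S_R^+} \in L^2(S_R^+; y^b)$; this is possible for $\mathcal{L}^1$-a.e.\ such $R$ by Fubini applied to $v \in L^2(B_1^+; y^b)$. The first step is to produce, via the Lax--Milgram theorem on $H^1_{0, S_R^+}(B_R^+; y^b)$ combined with a lifting of the trace $v|_{S_R^+}$ through \Cref{TO}, the unique $\tilde v \in H^1(B_R^+; y^b)$ with $\tilde v|_{S_R^+} = v|_{S_R^+}$ and
\[
\int_{B_R^+} y^b \nabla \tilde v \cdot \nabla \phi \, dX \;=\; -\int_{B_R'} f(\cdot, u) \, \phi \, dx \qquad \text{for all } \phi \in H^1_{0, S_R^+}(B_R^+; y^b).
\]
Coercivity of the bilinear form on the test space follows from \Cref{PI}, while continuity of the boundary linear functional is ensured by \Cref{TI}, the growth bound \eqref{H2}, the inclusion $u \in L^p(B_1')$, and the Sobolev trace embedding \eqref{S'E}.

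Second, $v = \tilde v$ a.e.\ on $B_R^+$ is obtained by duality. For any $\psi \in C_c(B_1^+)$, \Cref{aux-2nd} provides $\Psi \in \Hh \cap C^{2, \alpha}(\overline{B_R^+})$ with $\Delta_b \Psi = \psi$, $\Psi|_{S_1^+} = 0$, and $\partial_y^b \Psi = 0$ on $B_1'$. An approximation/density argument, together with the Sobolev embedding \eqref{S'E} for its trace, places $\Psi$ in $\Hh_{0, S_1^+} \cap L^p(B_1')$ and hence makes it admissible in \eqref{wpde}, yielding
\[
\int_{B_1^+} y^b v \, \psi \, dX \;=\; \int_{B_1'} f(\cdot, u) \, \Psi(\cdot, 0) \, dx.
\]
A parallel identity for $\tilde v$, obtained by integrating its defining weak form by parts against $\Psi$ on $B_R^+$ (legitimate since $\Psi$ is $C^{2,\alpha}$ up to $\overline{B_R^+}$), produces the same right-hand side modulo a boundary flux on $S_R^+$. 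This residual flux cancels against the contribution coming from the identity for $v$ on $B_1^+ \setminus \overline{B_R^+}$, since $\tilde v = v$ on $S_R^+$ in the trace sense, and since $v$, being distributionally $\mathcal L_b$-harmonic on $B_1^+$, is classically harmonic on $B_1^+ \cap \{y > \delta\}$ for every $\delta > 0$ by standard non-weighted elliptic theory, so its flux across $S_R^+$ is well defined for a.e.\ $R$. Subtracting the two identities gives
\[
\int_{B_R^+} y^b (v - \tilde v) \, \psi \, dX \;=\; 0 \qquad \text{for all } \psi \in C_c(B_R^+),
\]
which forces $v = \tilde v$ a.e.\ in $B_R^+$; since $r < R$, the conclusion $v \in H^1(B_r^+; y^b)$ follows.

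The main obstacle is the careful bookkeeping of the boundary flux on $S_R^+$: the auxiliary function $\Psi$ from \Cref{aux-2nd} vanishes on $S_1^+$ but not on $S_R^+$, so the integration-by-parts pairings for $v$ (on $B_1^+$) and $\tilde v$ (on $B_R^+$) each inherit a nontrivial surface contribution on $S_R^+$ that must be matched and cancelled. The two ingredients needed for a clean cancellation are the (non-weighted) interior regularity of $v$ away from $B_1'$---which provides a pointwise flux $\partial_\nu v$ on $S_R^+$ for a.e.\ $R$---and a density argument allowing $\Psi$ to be approximated, if necessary, by functions compactly supported inside $B_R^+$. Once these are in place, the Lax--Milgram construction and the duality identification are standard.
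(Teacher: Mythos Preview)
Your overall strategy---construct an $H^1$ competitor via Lax--Milgram and identify it with $v$ through the duality furnished by \Cref{aux-2nd}---is the same circle of ideas as in the paper, but two genuine gaps prevent the argument from closing as written.

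\textbf{First, the lateral trace cannot be lifted.} You select $R$ so that $v|_{S_R^+} \in L^2(S_R^+; y^b)$ and then propose to lift this to an $H^1$ boundary datum via \Cref{TO}. But \Cref{TO} only asserts compactness of the trace map $H^1 \to L^2(S_R^+; y^b)$, not surjectivity; its range is an $H^{1/2}$-type space, and you have no a priori reason to place $v|_{S_R^+}$ there. Without this, the Lax--Milgram construction of $\tilde v$ with prescribed lateral trace $v|_{S_R^+}$ does not get off the ground. This same difficulty resurfaces in your flux-matching step: the ``cancellation on $S_R^+$'' you describe requires pairing $\tilde v$ (an $H^1$ function) and $v$ (only $L^2$ up to $S_R^+$) against $\partial_\nu \Psi$, and the interior smoothness of $v$ on $\{y > \delta\}$ does not control the contribution near the corner $\partial B_R'$.

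\textbf{Second, for general $p$ your functional-analytic setup fails.} The statement is asserted under \eqref{H2} alone, with no upper bound on $p$. Continuity of $\phi \mapsto \int_{B_R'} f(\cdot,u)\,\phi$ on $H^1_{0,S_R^+}$ would require $\phi \in L^p(B_R')$, which the trace embedding \eqref{S'E} only delivers for $p \le 2^*_b$. Likewise, your claim that $\Psi \in L^p(B_1')$ via \eqref{S'E} fails for $p > 2^*_b$, so $\Psi$ need not be admissible in \eqref{wpde}. The paper handles both issues at once by introducing a cut-off $\eta \in C^\infty_c(B_1)$ with $\eta \equiv 1$ on $B_r$: working with $w \coloneqq \eta\, \Delta_b u$ on all of $B_1^+$ eliminates the lateral boundary entirely (since $w$ and the test functions $\Psi$ both vanish on $S_1^+$), and the product $\eta \Psi$ is automatically bounded on $B_1'$ because $\Psi \in C^{2,\alpha}$ on the support of $\eta$. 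The paper then invokes the Lions--Lax--Milgram theorem, with the linear functional measured in the augmented norm $\|\nabla \Psi\|_{L^2(B_1^+;y^b)} + \|\eta \Psi\|_{L^p(B_1')}$, precisely to accommodate arbitrary $p$.
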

\begin{proof}
The proof uses arguments that are well-known to the experts (see, for example, Theorem 2.5 in \cite{MR4316741} and Proposition 2.4 in \cite{MR4169657}). The details are included here for the reader's convenience. 
\newline 
\textbf{Step 1:} We begin by considering a smooth cut-off function $\eta \in C^{\infty}_{c}(B_1)$ such that $\eta \equiv 1$ in $B_r$ and $\partial_y^b \eta = 0$ on $B_1'$, and a function $\Psi \in \Hh \cap C^{2, \alpha}(B_R^+)$ with $\Psi = 0$ on $S_1^+$, where $r < R < 1$ is such that $\operatorname{supp}(\eta) \subset B_R$. It is then straightforward to verify that $\eta \Psi \in L^p(B_1')$ and conclude that $\eta \Psi$ is an admissible test function for \eqref{wpde} for any such function $\Psi$. Consequently, if we set $w \coloneqq \eta \Delta_b u$, we obtain that
\begin{align}
\int_{B_1^+} y^b w \Delta_b \Psi\,dX 
& = - \int_{B_1^+}  y^b(2 \nabla \eta \cdot \nabla \Psi + \Psi \Delta_b \eta)\Delta_b u\,dX + \int_{B_1'} f(x, u)\eta \Psi \,dx. \label{weakPDEreg}
\end{align}
Next, we claim that $w$ is the only solution to \eqref{weakPDEreg} that vanishes on $S_1^+$. Indeed, if $w_1, w_2$ are two such solutions, then $w_0 \coloneqq w_1 - w_2$ satisfies
\[
\int_{B_1^+} y^b w_0 \Delta_b \Psi\,dX = 0
\]
for all $\Psi$ as above. In turn, \Cref{aux-2nd} implies that 
\[
\int_{B_1^+}y^b w_0 \psi \,dX = 0
\]
for all $\psi \in C^{\infty}_c(B_1^+)$, and the claim readily follows.
\newline
\textbf{Step 2:} To conclude, we observe that, for fixed $\eta$ and $u$, the right-hand side of \eqref{weakPDEreg} defines a bounded linear operator in $\Psi$ on the space
\[
V \coloneqq \left\{ \Psi \in H^1_{0, S_1^+}(B_1^+; y^b) : \eta \Psi \in L^p(B_1')\right\}, 
\]
equipped with the norm 
\[
\|\Psi\|_V \coloneqq \| \nabla \Psi \|_{L^2(B_1^+; y^b)} + \|\eta \Psi\|_{L^p(B_1')}.
\]
Then, an application of the Lions--Lax--Milgram theorem yields the existence of a function $\xi \in H^1_{0, S_1^+}(B_1^+; y^b)$ such that 
\[
\int_{B_1^+} y^b \nabla \xi \cdot \nabla \Psi\,dX = \int_{B_1^+}  y^b(2 \nabla \eta \cdot \nabla \Psi + \Psi \Delta_b \eta)\Delta_b u\,dX - \int_{B_1'} f(x, u)\eta \Psi \,dx
\]
for all $\Psi \in V$. Using the fact that the space of all functions in $\Hh \cap C^{2, \alpha}(B_R^+)$ that vanish on $S_1^+$ is a subspace of $V$ for $R < 1$ sufficiently large, the divergence theorem can be used to show that $\xi$ satisfies \eqref{weakPDEreg}. Therefore, in view of the previous step, we conclude that $\xi = w$. In particular, our choice of $\eta$ implies that $\Delta_b u \in H^1(B_r^+; y^b)$. This concludes the proof.
\end{proof}

\begin{cor}
\label{v-solves}
Let $u$ be a weak solution to \eqref{bil-noBC} in the sense of Definition \ref{ws-def}, and let $r<1$. Then $v \coloneqq \Delta_b u$ is a weak solution to 
\begin{equation}
\label{v-PDE}
\left\{
\arraycolsep=1.4pt\def\arraystretch{1.6}
\begin{array}{rll}
\mathcal{L}_b(v) = & 0 & \text{ in } B_r^+, \\
\partial_y^b v = & f(\cdot, u)  & \text{ on } B_r'
\end{array}
\right.
\end{equation}
in the sense that 
\[
\int_{B_r^+} y^b \nabla v \cdot \nabla \phi\,dX + \int_{B_r'} f(x, u) \phi\,dx = 0
\]
for all $\phi \in H^1_{0, S_r^+}(B_r^+; y^b) \cap L^p(B_r')$.
\end{cor}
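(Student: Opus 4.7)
The plan is to reuse the intermediate identity already produced inside the proof of \Cref{LM}. Fix $r < R < 1$ and let $\eta \in C_c^{\infty}(B_1)$ be the cut-off used in that proof, satisfying $\eta \equiv 1$ on $B_r$, $\supp \eta \subset B_R$, and $\partial_y^b \eta = 0$ on $B_1'$. Once $\xi$ has been identified with $w = \eta v$, the Lions--Lax--Milgram step yields
\begin{equation*}
\int_{B_1^+} y^b \nabla(\eta v) \cdot \nabla \Psi\,dX = \int_{B_1^+} y^b \bigl(2 \nabla \eta \cdot \nabla \Psi + \Psi \Delta_b \eta\bigr) v\,dX - \int_{B_1'} f(x, u)\,\eta \Psi \,dx
\end{equation*}
for every $\Psi \in V := \{\Psi \in H^1_{0, S_1^+}(B_1^+; y^b) : \eta \Psi \in L^p(B_1')\}$, and this identity will be the engine of the proof.

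Given any test function $\phi \in H^1_{0, S_r^+}(B_r^+; y^b) \cap L^p(B_r')$, I would extend $\phi$ by zero to $B_1^+$ (and to $B_1'$) and check that the extension lies in $V$. Since $H^1_{0, S_r^+}(B_r^+; y^b)$ is by definition the closure of $C^{\infty}_c(\overline{B_r^+} \setminus \overline{S_r^+})$ in the norm of $H^1(B_r^+; y^b)$, and since $r < 1$ keeps $\overline{S_r^+}$ bounded away from $\overline{S_1^+}$, the trivial extensions of a smooth approximating sequence for $\phi$ belong to $C^{\infty}_c(\overline{B_1^+} \setminus \overline{S_1^+})$; passing to the limit shows that the zero extension of $\phi$ is in $H^1_{0, S_1^+}(B_1^+; y^b)$. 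Because $\eta \equiv 1$ on $\supp \phi$, we also have $\eta \phi = \phi \in L^p(B_1')$, so the extended $\phi$ is admissible as $\Psi$ in the displayed identity.

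The conclusion is then a matter of bookkeeping with the cut-off. On $B_r$ both $\nabla \eta$ and $\Delta_b \eta$ vanish, while outside $B_r$ the function $\phi$ and its weak gradient vanish. Hence the first integral on the right-hand side drops out entirely, while the boundary term collapses to $\int_{B_r'} f(x, u)\phi\,dx$. Expanding $\nabla(\eta v) = \eta \nabla v + v \nabla \eta$ and using the same support considerations shows that the $v \nabla \eta \cdot \nabla \phi$ contribution is zero, leaving $\int_{B_r^+} y^b \nabla v \cdot \nabla \phi\,dX$ on the left. Rearranging then produces the claimed weak formulation. The only genuinely delicate step is the extension/approximation argument that places the zero extension of $\phi$ inside $V$; once this is settled, everything else is a routine manipulation that relies solely on the preceding identity from \Cref{LM}.
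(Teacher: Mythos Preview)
Your argument is correct and takes a genuinely different route from the paper's own proof. The paper does not recycle the Lions--Lax--Milgram identity; instead it goes back to the fourth-order weak formulation \eqref{wpde}, tests with smooth $\phi \in C^{\infty}_c(B_r)$ satisfying $\partial_y^b \phi = 0$ on $B_r'$, and applies the divergence theorem (using \Cref{LM} to justify that $v \in H^1$). This yields the desired identity only for test functions with vanishing conormal derivative on the thin space, so the paper then removes that constraint by an explicit approximation: given any $\phi \in C^{\infty}_c(B_r)$, it sets $\phi_{\e}(X) \coloneqq \phi(X) - y\,\eta(y/\e)\,\partial_y\phi(X)$, checks that $\partial_y^b\phi_{\e} = 0$ on $B_r'$ and $\phi_{\e} \to \phi$ in $H^1(B_r^+;y^b)$, and passes to the limit.

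Your approach trades that approximation step for the variational identity already obtained inside \Cref{LM}, which from the outset holds for all $\Psi \in V$ without any conormal constraint. The price you pay is having to verify that the zero extension of an arbitrary $\phi \in H^1_{0,S_r^+}(B_r^+;y^b)\cap L^p(B_r')$ lands in $V$; as you note, this is the only delicate point, and your justification (supports of the smooth approximants stay away from $\overline{S_r^+}$, hence their trivial extensions remain smooth and compactly supported away from $\overline{S_1^+}$) is sound. The paper's route is more self-contained and makes the PDE structure transparent, whereas yours is shorter and avoids introducing the auxiliary sequence $\phi_{\e}$.
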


\begin{proof}
Let $\phi \in C^{\infty}_c(B_r)$ be such that $\partial_y^b \phi = 0$ on $B_r'$. Then $\phi$ is an admissible test function according to \Cref{ws-def}, and therefore the divergence theorem (together with \Cref{LM}) implies that
\begin{equation}
\label{v-wsproof}
\int_{B_r^+} y^b \nabla v \cdot \nabla \phi\,dX + \int_{B_r'} f(x, u)\phi\,dx = 0.
\end{equation}
To conclude, we need to show that \eqref{v-wsproof} continues to hold without the additional condition $\partial_y^b \phi = 0$ on $B_r'$. To this end, let $\phi \in C^{\infty}_c(B_r)$ and define
\[
\phi_{\e}(X) \coloneqq \phi(X) - y \eta\left(\frac{y}{\e}\right) \partial_y\phi (X),
\]
where $\eta \colon \RR \to [0, 1]$ is a smooth cut-off function that is compactly supported in $(-2, 2)$ and such that $\eta \equiv 1$ on $(-1,1)$. Then $\{\phi_{\e}\}_{\e} \subset C^{\infty}_c(B_r^+)$ for $\e>0$ sufficiently small, and direct computations show that $\phi_{\e} \to \phi$ in $H^1(B_r^+; y^b)$ as $\e\to 0^+$. Moreover, as one can readily check, on $B_r'$ we have that $\phi_{\e} = \phi$ and $\partial_y^b \phi_{\e} = 0$. Consequently, by \eqref{v-wsproof} we obtain that 
\[
\int_{B_r^+} y^b \nabla v \cdot \nabla \phi_{\e}\,dX + \int_{B_r'} f(x, u)\phi_{\e}\,dx = 0.
\]
Finally,  letting $\e \to 0^+$ yields the desired result.
\end{proof}

Next, we present a proposition that establishes the $L^q$-regularity theory for both $u$ and $\Delta_b u$. This is the first step in our bootstrap process, and will enable us to apply the results of Subsection \ref{reg-subsec}. To proceed, we introduce the following structural assumption. 

\begin{enumerate}[label=(H.\arabic*), ref=H.\arabic*]\addtocounter{enumi}{3}
\item \label{H4} Let $p$ be as in \eqref{H2}. Assume that 
\[
1 \le p < p^{\#} \coloneqq \frac{2^*_b \cdot 2^{**}_b }{4} + 1,
\]
where $2^*_b$ and $2^{**}_b$ denote the fractional Sobolev exponents defined in \Cref{SII}. 
\end{enumerate}

\begin{rmk}
\label{p-condition}
Before proceeding further, we comment on the assumption \eqref{H4}. Similar constraints are standard in the literature on regularity theory for comparable problems (see, for example, Corollary 2.10 in \cite{MR3226738}); these conditions are generally dictated by integrability requirements on the specific test functions used in the proofs. In our case, as will become evident from the proof of \Cref{Lq-reg} below, assumption \eqref{H4} allows us to select a number $\beta_0$ such that
\[
2 < \beta_0 \le \gamma_0 \coloneqq \frac{2^{*}_b \cdot 2^{**}_b }{2(p - 1)}, \qquad p > 1. 
\] 
The upper bound $\beta_0 \le \gamma_0$ allows us to obtain uniform estimates with respect to the truncation we consider (see Step 2 in the proof of \Cref{Lq-reg}). The lower bound $\beta_0 > 2$ is crucial to our ability to iterate the argument (see Step 3 in the proof of \Cref{Lq-reg}). It is also worth noting that values of $p$ in $[1, 2]$ are permissible for all choices of $n$ and $b$. Additionally, we remark that when $b = 0$, the condition simplifies to
\[
p < \frac{n(n + 1)}{(n - 1)^2} + 1.
\]
In particular, for $b = 0$, the range $p \in [1, 3]$ is admissible for all physically relevant dimensions up to and including $n = 4$. In view of \eqref{H2}, this implies that our regularity results apply to a range of exponents $p$ that correspond to both sublinear and superlinear growth conditions for $f$.
\end{rmk}

\begin{prop}
\label{Lq-reg}
Let $u$ be a weak solution to \eqref{bil-noBC} in the sense of \Cref{ws-def} and let $v \coloneqq \Delta_b u$. Assume that $f$ satisfies \eqref{H2} and \eqref{H4}. Then, for every $q \in [1, \infty)$ and every $0 < r < R < 1$ there exists a constant $C = C(n, b, q, R, r)$ such that 
\begin{align}
\|u\|_{L^q(B_r^+; y^b)} + \|u\|_{L^q(B_r')} & \le C \left(\|u\|_{H^1(B_R^+; y^b)} + \|v\|_{H^1(B_R^+; y^b)}\right), \label{Lq-u} \\
\|v\|_{L^q(B_r^+; y^b)} + \|v\|_{L^q(B_r')} & \le C \left(\|u\|_{H^1(B_R^+; y^b)} + \|v\|_{H^1(B_R^+; y^b)}\right). \label{Lq-v}
\end{align}
\end{prop}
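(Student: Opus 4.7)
The plan is to apply a Moser--Trudinger iteration scheme to the coupled second-order system obtained by combining \Cref{LM} and \Cref{v-solves}: the weak solution $u$ satisfies $\mathcal{L}_b(u) = y^b v$ with $\partial_y^b u = 0$ on $B_R'$, while $v$ satisfies $\mathcal{L}_b(v) = 0$ with $\partial_y^b v = f(\cdot, u)$ on $B_R'$ (the first assertion follows directly from \Cref{LM} once \eqref{wpde} is rewritten using $v = \Delta_b u$). The base-level integrability for the iteration is supplied by \Cref{SII}: since $u, v \in H^1(B_R^+; y^b)$, their traces lie in $L^{2^*_b}(B_R')$ and they themselves lie in $L^{2^{**}_b}(B_R^+; y^b)$.

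For the inductive step, fix $0 < r_{k+1} < r_k < R$, a cutoff $\eta_k \in C^\infty_c(B_{r_k})$ with $\eta_k \equiv 1$ on $B_{r_{k+1}}$ and $\partial_y^b \eta_k = 0$, a truncation level $M > 0$, and a parameter $\beta_0 > 2$ as provided by \eqref{H4}. I would test the weak formulation of $v$'s equation against $\phi = \eta_k^2\, v\, \min(|v|, M)^{\beta_0 - 2}$, which is admissible (bounded and supported away from $S_R^+$). Writing $|v|_M \coloneqq \min(|v|, M)$ and using Young's inequality to absorb the $\nabla \eta_k$ cross-terms together with the growth bound $|f(x, u)| \le C(1 + |u|^{p-1})$ from \eqref{H2}, the identity reduces to
\begin{align*}
\int_{B_R^+} y^b \bigl|\nabla\bigl(|v|_M^{\beta_0/2}\eta_k\bigr)\bigr|^2\,dX &\le C \int_{B_R^+} y^b |v|_M^{\beta_0}\,\bigl(|\nabla \eta_k|^2 + \eta_k^2\bigr)\,dX \\ &\quad + C \int_{B_R'} (1 + |u|^{p-1})\,|v|\,|v|_M^{\beta_0 - 2}\,\eta_k^2\,dx.
\end{align*}
The Sobolev-type inequalities \eqref{S'E}--\eqref{SE} applied to $|v|_M^{\beta_0/2}\eta_k$ promote the left-hand side to control $\bigl\||v|_M^{\beta_0/2}\bigr\|_{L^{2^*_b}(B_{r_{k+1}}')}^2$ and $\bigl\||v|_M^{\beta_0/2}\bigr\|_{L^{2^{**}_b}(B_{r_{k+1}}^+; y^b)}^2$. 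For the boundary integral on the right, Hölder's inequality with the split $|v|^{\beta_0-1} = \bigl(|v|^{\beta_0/2}\bigr)^{2(\beta_0-1)/\beta_0}$, combined with the constraint $\beta_0 \le \gamma_0 = 2^*_b\cdot 2^{**}_b / (2(p-1))$ from \eqref{H4}, produces a factor $\bigl(\int_{B_R'} |v|_M^{\beta_0\cdot 2^*_b/2}\bigr)^{\alpha}$ with $\alpha < 1$ together with an $L^{Q}(B_R')$ norm of $u$ for an exponent $Q$ matched to the current stage of the iteration; the first factor is absorbed into the Sobolev left-hand side via Young's inequality, and the second is controlled by the inductive hypothesis. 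Passing $M \to \infty$ by monotone convergence then yields an improvement bound of the form
\[
\|v\|_{L^{\beta_0 2^*_b/2}(B_{r_{k+1}}')} + \|v\|_{L^{\beta_0 2^{**}_b/2}(B_{r_{k+1}}^+; y^b)} \le C\bigl(1 + \|u\|_{L^{Q}(B_{r_k}')} + \|v\|_{L^{\beta_0}(B_{r_k}^+; y^b)} + \|v\|_{L^{\beta_0}(B_{r_k}')}\bigr).
\]
An entirely parallel argument for $u$, using the equation $\mathcal{L}_b u = y^b v$, yields the same improvement, with the boundary coupling replaced by the easier interior term $\int y^b |v|\,|u|\,|u|_M^{\beta_0 - 2}\eta_k^2\,dX$ treated by Hölder on the measure $y^b\,dX$.

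The iteration is then closed by alternating these two estimates along a nested sequence of balls $B_{r_k}^+ \downarrow B_r^+$: each step multiplies the effective integrability exponent by $\lambda \coloneqq \min\{2^*_b, 2^{**}_b\}/2$, which is strictly greater than $1$ because $b \in (-1,1)$, so the lower bound $\beta_0 > 2$ from \eqref{H4} ensures that any prescribed $q < \infty$ is reached after finitely many stages. The main obstacle, and the precise reason for imposing \eqref{H4}, is the coupling: the right-hand side of $v$'s estimate involves a power of $u$, so the two inequalities must be iterated in tandem. The upper bound $\beta_0 \le \gamma_0$ is exactly what makes the Hölder dual exponents balance, so that the power of $u$ appearing on the right is controlled by the integrability of $u$ available at the same stage, with constants independent of the truncation parameter $M$. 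A secondary technical point is justifying the admissibility of the truncated test functions and the passage $M \to \infty$; this is standard but must be done carefully to ensure all constants are independent of $M$.
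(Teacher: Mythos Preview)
Your overall plan---Moser iteration on the coupled second-order system for $(u,v)$---is exactly the paper's strategy. The execution, however, diverges from the paper's at the key step, and your treatment of the boundary coupling is not clearly sound as written.

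The paper does \emph{not} absorb a high-power $v$ term into the Sobolev left-hand side. Instead each stage is split into two sequential passes. First $u$ is improved: one tests the equation $\mathcal{L}_b u = y^b v$ with $\eta^2 u_M^{2^{**}_b-2}u$ (note the exponent $2^{**}_b$, not your $\beta_0$). The only coupling is the \emph{interior} term $\int y^b v\,\eta^2 u_M^{2^{**}_b-2}u$, which Young's inequality splits into $\int y^b|u|^{2^{**}_b}+\int y^b|v|^{2^{**}_b}$, both already finite by \eqref{SE}. This alone yields $u\in L^{2^*_b\,2^{**}_b/2}(B'_{r_0})$ on the thin space. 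Only then is $v$ improved, testing with $\eta^2 v_M^{\beta_0-2}v$ for $\beta_0=\min\{2^*_b,2^{**}_b,2+\varepsilon\}$. The boundary term $\int\eta^2 f(x,u)\,v_M^{\beta_0-2}v$ is handled by a direct Young inequality,
\[
\Bigl|\int_{B'}\eta^2 f(x,u)\,v_M^{\beta_0-2}v\,dx\Bigr|\le \frac{1}{\beta_0}\int_{B'}\eta^2|f(x,u)|^{\beta_0}\,dx+\frac{\beta_0-1}{\beta_0}\int_{B'}\eta^2|v|^{\beta_0}\,dx,
\]
with \emph{no} absorption into the left. The second integral is finite because $\beta_0\le 2^*_b$ and $v\in L^{2^*_b}(B')$ by \eqref{S'E}; the first is finite because $|f|^{\beta_0}\le C(1+|u|^{(p-1)\beta_0})$ and $(p-1)\beta_0\le(p-1)\gamma_0=2^*_b\,2^{**}_b/2$, which is precisely the improved trace integrability of $u$ just obtained. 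This---not an absorption exponent $\alpha<1$---is the actual role of the constraint $\beta_0\le\gamma_0$ from \eqref{H4}.

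Your absorption argument, by contrast, has a mismatch: after truncation the Sobolev left-hand side controls $\eta\,v_M^{(\beta_0-2)/2}|v|$ in $L^{2^*_b}(B')$, which is \emph{not} a pure power of $|v|_M$, whereas your H\"older split treats the right-hand side as involving $|v|_M^{\beta_0-1}$ (or else the untruncated $|v|^{\beta_0-1}$). On the set $\{|v|>M\}$ these quantities do not match up, and one cannot pass $M\to\infty$ before absorbing since the quantity to be absorbed is then not known to be finite. A second point: using the same exponent $\beta_0$ for both $u$ and $v$ is suboptimal. The paper exploits the absence of boundary coupling in $u$'s equation to push $u$ by the full factor $2^{**}_b/2$ at each step (exponents $\alpha_k$), keeping $u$'s integrability strictly ahead of $v$'s (exponents $\beta_k\le\alpha_k$); this asymmetry is exactly what makes $|f(\cdot,u)|^{\beta_k}$ controllable at every stage of the iteration.
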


\begin{proof}
The proof adapts the approach of Proposition 7.6 in \cite{MR4169657}, which itself draws inspiration from Theorem 2.3 in \cite{MR0539217}. Our presentation highlights the key differences introduced by the nonlinearity $f$. 

Observe that, in view of \Cref{v-solves}, for every $R < 1$, the pair $(u,v)$ is a weak solution for the coupled system 
\begin{equation}
\label{x}
\left\{
\arraycolsep=1.4pt\def\arraystretch{1.6}
\begin{array}{rll}
\mathcal{L}_b(u) = & y^b v & \text{ in } B_R^+, \\
\mathcal{L}_b(v) = & 0 & \text{ in } B_R^+, \\
\partial_y^b u = & 0 & \text{ on } B_R', \\
\partial_y^b v = & f(\cdot, u)  & \text{ on } B_R'.
\end{array}
\right.
\end{equation}
We also notice that if $p = 1$ then by \eqref{H2} we have that $\partial_y^b v \in L^{\infty}(B_R')$. Therefore, in this case one can directly apply the results of Subsection \ref{reg-subsec} (see \Cref{Reg-cor} below). For this reason, in the following we assume without loss that $p > 1$. 

The proof is divided into several steps. 
\newline
\textbf{Step 1:} (\emph{Improving the integrability of $u$}) Let $\eta_{0, u} \in C^{\infty}_c(\overline{B_R^+} \setminus \overline{S_R^+}; [0, 1])$ be a cut-off function such that $\eta_{0, u} \equiv 1$ in $\overline{B_{r_0}^+}$ for some $r_0 \in (r, R)$. For $M \in \NN$, let $u_M \coloneqq \min\{|u|, M\}$ and define
\begin{equation}
\label{phi0u}
\phi_{0, u, M}(X) \coloneqq \eta_{0, u}^2(X) u_M(X)^{2^{**}_b - 2}u(X).
\end{equation}
Observe that $\phi_{0, u, M}$ is an admissible test function for the Neumann-type equation satisfied by $u$ (see \Cref{ws2-def}). In particular, testing with $\phi_{0, u, M}$ yields the identity
\begin{equation}
\label{weak-phiu}
\int_{B_R^+} y^b \nabla u \cdot \nabla \left(\eta_{0, u}^2 u_M^{2^{**}_b - 2}u\right)\,dX = - \int_{B_R^+}y^b v \left( \eta_{0, u}^2 u_M^{2^{**}_b - 2}u \right)\,dX. 
\end{equation}
Direct computations (see also Lemma 9.1 in \cite{MR2735078}) show that there exist constants $C_1 = C_1(2^{**}_b)$ and  $C_2 = C_2(2^{**}_b)$ such that
\begin{multline}
\label{phiu-young}
\int_{B_R^+} y^b \left|\nabla \left(\eta_{0, u} u_M^{(2^{**}_b - 2)/2}u\right)\right|^2\,dX \le  C_1\underbrace{\int_{B_R^+} y^b \nabla u \cdot \nabla \left(\eta_{0, u}^2 u_M^{2^{**}_b - 2}u\right)\,dX}_{I} \\ + C_2  \underbrace{\int_{B_R^+} y^b |\nabla \eta_{0, u}|^2 u_M^{2^{**}_b - 2}u^2\,dX}_{II}.
\end{multline}
In view of \eqref{weak-phiu}, using Young's inequality and the fact that $u_M \le |u|$, we obtain that
\begin{align}
| I | & \le \frac{1}{2^{**}_b} \int_{B_R^+} y^b \eta_{0, u}^2 |v|^{2^{**}_b}\,dX + \frac{2^{**}_b - 1}{2^{**}_b} \int_{B_R^+} y^b \eta_{0, u}^2 u_M^{(2^{**}_b - 2)\frac{2^{**}_b}{2^{**}_b - 1}}|u|^{\frac{2^{**}_b}{2^{**}_b - 1}}\,dX \notag \\
& \le \frac{1}{2^{**}_b} \int_{B_R^+} y^b \eta_{0, u}^2 |v|^{2^{**}_b}\,dX + \frac{2^{**}_b - 1}{2^{**}_b} \int_{B_R^+} y^b \eta_{0, u}^2 |u|^{2^{**}_b}\,dX. \label{phiu-I}
\end{align}
Moreover, we have that 
\begin{equation}
\label{phiu-II}
| II | \le \int_{B_R^+} y^b |\nabla \eta_{0, u}|^2 |u|^{2^{**}_b}\,dX. 
\end{equation}
Combining \eqref{phiu-young} with the estimates in \eqref{phiu-I} and \eqref{phiu-II} shows that 
\begin{equation}
\label{MTu}
\int_{B_R^+} y^b \left|\nabla \left(\eta_{0, u} u_M^{(2^{**}_b - 2)/2}u\right)\right|^2\,dX \le C(n, b, \eta_{0, u}) \int_{B_R^+} y^b(|u|^{2^{**}_b} + |v|^{2^{**}_b})\,dX.
\end{equation}
Since $u, v \in L^{2^{**}_b}(B_R^+; y^b)$ by \eqref{SE}, letting $M \to \infty$ in \eqref{MTu} and using Fatou's lemma, we obtain that $\nabla (\eta_{0, u} |u|^{(2^{**}_b - 2)/2}u) \in L^2(B_R^+; y^b)$. Consequently, \Cref{PI} implies that $\eta_{0, u} |u|^{(2^{**}_b - 2)/2}u \in H^1(B_R^+; y^b)$ and a further application of \eqref{S'E} and \eqref{SE} yields that $\eta_{0, u} |u|^{(2^{**}_b - 2)/2}u \in L^{2^{**}_b}(B_R^+; y^b)$ and $\eta_{0, u} |u|^{(2^{**}_b - 2)/2}u \in L^{2^*_b}(B_R')$. Recalling that $\eta_{0, u} \equiv 1$ in $\overline{B_{r_0}^+}$ we can then conclude that 
\begin{equation}
\label{reg-u-0}
u \in L^{(2^{**}_b)^2/2}(B_{r_0}^+; y^b) \cap L^{2^{*}_b \cdot 2^{**}_b /2}(B_{r_0}'),
\end{equation}
and furthermore that 
\[
\|u\|_{L^{(2^{**}_b)^2/2}(B_{r_0}^+; y^b)} + \|u\|_{L^{2^{*}_b \cdot 2^{**}_b/2}(B_{r_0}')} \le C(n, b, R, r_0) \left(\|u\|_{H^1(B_R^+; y^b)} + \|v\|_{H^1(B_R^+; y^b)}\right),
\]
where the constant $C$ depends on $R$ and $r_0$ through $\eta_{0, u}$ and a suitable rescaling of the constants $S$ and $S'$ in \Cref{SII}.
\newline
\textbf{Step 2:} (\emph{Improving the integrability of $v$}) Let $r_0 \in (r, R)$ be given as in the previous step and let $\eta_{0, v} \in C^{\infty}_c(\overline{B_{r_0}^+} \setminus \overline{S_{r_0}^+}; [0, 1])$ be a cut-off function such that $\eta_{0, v} \equiv 1$ in $\overline{B_{\rho_0}^+}$ for some $\rho_0 \in (r, r_0)$. Observe that in view of \eqref{H4}, we can find $\e > 0$ such that 
\begin{equation}
\label{H4-equiv}
p \le \frac{2^{*}_b \cdot 2^{**}_b}{2(2 + \e)} + 1.
\end{equation}
Similarly to the previous step, for $M \in \NN$ we let $v_M \coloneqq \min\{|v|, M\}$ and define
\begin{equation}
\label{phi0u}
\phi_{0, v, M}(X) \coloneqq \eta_{0, v}^2(X) v_M(X)^{\beta_0 - 2}v(X),
\end{equation}
where
\begin{equation}
\label{beta0-def}
\beta_0 \coloneqq \min\{2^*_b, 2^{**}_b, 2 + \e\},
\end{equation}
where $\e$ is given as in \eqref{H4-equiv}. Next, we claim that $\phi_{0, v, M}$ is an admissible test function for the Neumann problem satisfied by $v$ (see \eqref{v-PDE}). Since by assumption $p > 1$,  if we set
\begin{equation}
\label{gamma0-def}
\gamma_0 \coloneqq \frac{2^{*}_b \cdot 2^{**}_b }{2(p - 1)},
\end{equation}
then by \eqref{H2} and \eqref{reg-u-0} we have  $f(\cdot, u(\cdot)) \in L^{\gamma_0}(B_{r_0}')$ with
\begin{equation}
\label{f-int-est}
\|f(\cdot, u)\|_{L^{\gamma_0}(B_{r_0}')}^{\gamma_0} \le C\left(1 + \|u\|_{L^{2^{*}_b \cdot 2^{**}_b /2}(B_{r_0}')}^{2^{*}_b \cdot 2^{**}_b /2}\right).
\end{equation}
Therefore, to prove the claim, it is enough to verify that $v \in L^{\gamma_0'}(B_{r_0}')$ (here we use $'$ to denote the H\"{o}lder conjugate). By \eqref{S'E}, the desired condition follows provided that $\gamma_0' \le 2^*_b$, which in turn is equivalent to 
\[
p \le 2^{**}_b \left(\frac{2^*_b - 1}{2}\right) + 1.
\]
Observe that this condition holds true for all values of $p$ as in \eqref{H4} and therefore does not pose any further restriction. 
Arguing as in \eqref{weak-phiu} and \eqref{phiu-young}, we obtain that 
\begin{multline}
\label{reg-v-00}
\int_{B_{r_0}^+} y^b \left|\nabla \left(\eta_{0, v} v_M^{(\beta_0 - 2)/2}v\right)\right|^2\,dX \le - C_1 \underbrace{\int_{B_{r_0}'} \eta_{0, v}^2 f(x, u) v_M^{\beta_0 - 2}v\,dx}_{III} \\ + C_2 \underbrace{\int_{B_{r_0}^+} y^b |\nabla \eta_{0, v}|^2 v_M^{\beta_0 - 2}v^2\,dX}_{IV},
\end{multline}
where $C_1 = C_1(\beta_0)$ and $C_2 = C_2(\beta_0)$ are constants. Similarly to above, we readily obtain that 
\begin{align}
| III | & \le \frac{1}{\beta_0} \int_{B_{r_0}'} \eta_{0, v}^2 |f(x, u)|^{\beta_0}\,dx + \frac{\beta_0 - 1}{\beta_0}\int_{B_{r_0}'} \eta_{0, v}^2 |v|^{\beta_0}\,dx , \label{III-reg}\\
| IV | & \le \int_{B_{r_0}^+} y^b |v|^{\beta_0}|\nabla \eta_{0, v}|^2\,dX. \label{IV-reg}
\end{align}
Crucially, by \eqref{H4-equiv} and \eqref{beta0-def}, one has $\beta_0 \le 2 + \e \le \gamma_0$. Using this fact and combining the estimates in \eqref{f-int-est}, \eqref{reg-v-00}, \eqref{III-reg}, and \eqref{IV-reg} we can argue as in \eqref{reg-u-0} and conclude that $\eta_{0, v} |v|^{(\beta_0 - 2)/2}v \in H^1(B_{r_0}^+; y^b)$. Moreover, recalling that $\eta_{0, v} \equiv 1$ in $\overline{B_{\rho_0}^+}$, we obtain that
\begin{equation}
\label{reg-v-0}
v \in L^{\beta_0 2^{**}_b/2}(B_{\rho_0}^+; y^b) \cap L^{\beta_0 2^{*}_b/2}(B_{\rho_0}'),
\end{equation}
with 
\[
\|v\|_{L^{\beta_0 2^{**}_b/2}(B_r^+; y^b)} + \|v\|_{L^{\beta_0 2^{*}_b/2}(B_r')} \le C(n, b, R, r_0, \rho_0) \left(\|u\|_{H^1(B_R^+; y^b)} + \|v\|_{H^1(B_R^+; y^b)}\right),
\]
where the constant $C$ depends on $R$, $r_0$, and $\rho_0$ through $\eta_{0, u}$, $\eta_{0, v}$, and a suitable modification of the constants $S$ and $S'$ in \Cref{SII}.
\newline
\textbf{Step 3:} (\emph{The iterative argument}) The strategy employed above can be used to iteratively improve the integrability of $u$ and $v$. To this end, set $\alpha_0 \coloneqq 2^{**}_b$, let $\beta_0$ and $\gamma_0$ be given as in \eqref{beta0-def} and \eqref{gamma0-def}, respectively, and for $k \ge 1$ define
\[
\alpha_k \coloneqq \frac{2^{**}_b \beta_{k - 1}}{2}, \qquad \beta_k \coloneqq \beta_0 \left(\frac{\beta_0}{2}\right)^k, \qquad \text{ and } \qquad \gamma_k \coloneqq \frac{2^*_b \alpha_k}{2}.
\]
Observe that $\alpha_k \ge \beta_k$ for all $k$ and that $\beta_k \to \infty$ as $k \to \infty$, since $\beta_0 > 2$. For $q$ as in the statement, let $K \in \NN$ be such that $\beta_K \ge q$, set $\rho_{-1} \coloneqq R$, and for $k \ge 0$ define
\[
\rho_k \coloneqq R - \frac{k + 1}{K + 1}(R - r) \qquad \text{ and } \qquad r_k \coloneqq \frac{1}{2}\left(\rho_{k - 1} - \rho_k\right).
\]
We can then define cut-off functions $\eta_{k, u} \in C^{\infty}_c(\overline{B_{\rho_{k - 1}}^+} \setminus \overline{S_{\rho_{k - 1}}^+}; [0, 1])$ and $\eta_{k, v} \in C^{\infty}_c(\overline{B_{r_k}^+} \setminus \overline{S_{r_k}^+}; [0, 1])$ such that $\eta_{k, u} \equiv 1$ in $\overline{B_{r_k}^+}$ and $\eta_{k, v} \equiv 1$ in $\overline{B_{\rho_k}^+}$. With these at hand, one can reason as in the previous two steps by considering 
\begin{align*}
\phi_{k, u, M}(X) & \coloneqq \eta_{k, u}^2(X) u_M(X)^{\alpha_k - 2}u(X), \\
\phi_{k, v, M}(X) & \coloneqq \eta_{k, v}^2(X) v_M(X)^{\beta_k - 2}v(X)
\end{align*}
as test functions for the equations satisfied by $u$ and $v$, respectively. To be precise, for $k \le K$, reasoning as in Step 1 yields 
\[
u \in L^{2^{**}_b \alpha_k/2}(B_{r_k}^+; y^b) \cap L^{2^*_b \alpha_k/2}(B_{r_k}').
\]
In turn, similarly to Step 2, we have that $f(\cdot, u(\cdot)) \in L^{\gamma_k}(B_{r_k}')$ and therefore we obtain that
\[ 
v \in L^{2^{**}_b \beta_k/2}(B_{\rho_k}^+; y^b) \cap L^{2^*_b \beta_k/2}(B_{\rho_k}').
\]
Since \eqref{Lq-u} and \eqref{Lq-v} readily follow, this concludes the proof.
\end{proof}

\begin{cor}
\label{Reg-cor}
Let $r < 1$. Then, under the assumptions of \Cref{Lq-reg}, we have that: 
\begin{itemize}
\item[$(i)$] If $b \ge 0$ then $(u,v) \in C^{2, \alpha}(B_r^+\cup B'_r) \times C^{0, \alpha}(B_r^+\cup B'_r)$ for all $\alpha \in (0, 1 - b)$.
\item[$(ii)$] If $b < 0$ then $(u,v) \in C^{3, \alpha}(B_r^+\cup B'_r) \times C^{1, \alpha}(B_r^+\cup B'_r)$ for all $\alpha \in (0, - b)$.
\end{itemize}
\end{cor}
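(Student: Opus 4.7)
\medskip

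The plan is to decouple the corollary into a regularity statement for $v$ driven by the Neumann coupling $f(\cdot,u)$, and then bootstrap that information into $u$ via an even reflection across $\{y=0\}$. As a preliminary observation, \Cref{Lq-reg} gives $u \in L^q(B_{R'}')$ for every $q < \infty$ and every $R' < 1$, so the growth bound in \eqref{H2} immediately yields $f(\cdot, u) \in L^q(B_{R'}')$ for all $q < \infty$.

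For the regularity of $v$, recall from \Cref{v-solves} that $v$ is a weak solution to \eqref{v-PDE} with source $\gamma_2 = f(\cdot,u) \in L^q(B_{R'}')$. I would then apply \Cref{STV1.5} on a chain $r < r_1 < R' < 1$: if $b \geq 0$, choosing $q > n/(1-b)$ arbitrarily large yields $v \in C^{0,\alpha}(B_{r_1}^+)$ for every $\alpha \in (0, 1-b)$; if $b < 0$, the second half of \Cref{STV1.5} (applied with $q > -n/b$) upgrades this to $v \in C^{1,\alpha}(B_{r_1}^+)$ for every $\alpha \in (0, -b)$.

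For $u$, the strategy is to invoke the interior Schauder theory of \Cref{STV} on the even extension. Since $\partial_y^b u = 0$ on $B_1'$, and since $u$ weakly satisfies $\mathcal{L}_b u = y^b v$ in $B_{r_1}^+$, a standard computation using the even parts of test functions shows that the even extension $u^*$ is a weak solution (in the sense of \eqref{weak-even}) to $\mathcal{L}_b u^* = |y|^b v^*$ on $B_{r_1}$, with $\gamma = -v^*$. For $b \geq 0$, the even extension of a $C^{0,\alpha}$ function is automatically $C^{0,\alpha}$, so $v^* \in C^{0,\alpha}(B_{r_1})$ and \Cref{STV}(i) with $k=0$ gives $u^* \in C^{2,\alpha}(B_r)$, yielding part (i).

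The main obstacle is the $b<0$ case of part (ii), which requires that $v^* \in C^{1,\alpha}(B_{r_1})$. The point is that in this regime the even extension of the $y$-derivative (which is odd) must still be continuous across $\{y=0\}$, i.e., we need $\partial_y v(x,0) = 0$. This can be extracted from the Neumann condition: since $v \in C^{1,\alpha}(B_{r_1}^+)$ up to $B_{r_1}'$ and $y^b \partial_y v(x,y) \to f(x, u(x,0))$ as $y \to 0^+$, the fact that $y^b \to +\infty$ for $b<0$ forces $\partial_y v(x,0) = \lim_{y \to 0^+} \partial_y v(x,y) = 0$. A brief interpolation argument, comparing a point in the upper half with its mirror image through a boundary point on $\{y=0\}$ and using that $\partial_y v$ is $C^{0,\alpha}$ and vanishes on $B_{r_1}'$, then confirms that the odd extension of $\partial_y v$ is globally $C^{0,\alpha}$, so $v^* \in C^{1,\alpha}(B_{r_1})$. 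Applying \Cref{STV}(i) with $k=1$ gives $u^* \in C^{3,\alpha}(B_r)$, completing part (ii).
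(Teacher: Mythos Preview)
Your proof is correct and follows the same route as the paper: use \Cref{Lq-reg} to place $f(\cdot,u)$ in $L^q(B_{r_1}')$ for arbitrarily large $q$, apply \Cref{STV1.5} to obtain the stated H\"older regularity for $v$, and then feed this into \Cref{STV} (via the even reflection) to bootstrap the regularity of $u$. The paper's own proof is terser and simply invokes \Cref{STV} without commenting on why the even extension $v^*$ lies in $C^{1,\alpha}(B_{r_1})$ when $b<0$; your observation that the Neumann condition $y^b\partial_y v \to f(\cdot,u)$ forces $\partial_y v(\cdot,0)=0$ (since $y^b\to+\infty$), and hence that the odd extension of $\partial_y v$ remains $C^{0,\alpha}$ across $\{y=0\}$, is the correct justification for that step.
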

\begin{proof}
Fix $r < r_2 <  r_1 < 1$. By \Cref{Lq-reg} (see in particular \eqref{Lq-u}), together with \eqref{H2}, we obtain that $f(\cdot, u(\cdot)) \in L^{q}(B_{r_1}')$ for $q \in (n/(1 - b), n/(1 - b) + 1]$ if $b \ge 0$ and $q \in (-n/b, -n/b + 1]$ if $b < 0$. Thus, we are in a position to apply \Cref{STV1.5} and conclude that $v \in C^{0, \alpha}(B_{r_2}^+\cup B'_{r_2})$ for all $\alpha \in (0, 1 - b)$ whenever $b \ge 0$, and $v \in C^{1, \alpha}(B_{r_2}^+\cup B'_{r_2})$ for all $\alpha \in (0, -b)$ if $b < 0$. In turn, by \Cref{STV}, we obtain that $u \in C^{2, \alpha}(B_r^+\cup B'_{r})$ with $\alpha \in (0, 1 - b)$ for $b \ge 0$, and $u \in C^{3, \alpha}(B_r^+\cup B'_{r})$ with $\alpha \in (0, -b)$ when $b < 0$.
\end{proof}

The next proposition shows that additional regularity assumptions on $f$ yield an improvement on the regularity of $v$. To be precise, we introduce the following structural assumption.

\begin{enumerate}[label=(H.\arabic*), ref=H.\arabic*]\addtocounter{enumi}{4}
\item \label{H5} $f \colon B_1' \times \RR \to \RR$ is locally Lipschitz-continuous in the sense that for all $M > 0$ there exists a constant $C$ such that
\[
|f(x_1, \zeta_1) - f(x_2, \zeta_2)| \le C (|x_1 - x_2| + |\zeta_1 - \zeta_2|)
\]
hold for all $x_i \in B_1'$ and all $\zeta_i \in \RR$ with $|\zeta_i| \le M$, $i = 1, 2$.
\end{enumerate}

\begin{prop}
\label{improved-reg}
Let $u$ be a weak solution to \eqref{bil-noBC} in the sense of \Cref{ws-def} and let $v \coloneqq \Delta_b u$. Assume that $f$ satisfies \eqref{H2}, \eqref{H4}, and \eqref{H5}.
Then the following hold:
\begin{itemize}
\item[$(i)$] Assume that $b \ge 0$. Then for all $0 < r < R < 1$, all $\alpha \in (0, 1 - b)$, and all $q > n/(1 - b - \alpha)$ there exists a constant $C = C(n, b, R, r, q, \alpha)$ such that 
\[
\| \nabla_x' v\|_{C^{0, \alpha}({B_r^+\cup B'_r})} \le C \left(\|v\|_{H^1(B_{R}^+; y^b)} + 1 + \|\nabla_x' u\|_{L^q(B_{R}')} \right).
\]
\item[$(ii)$] Assume that $b < 0$. Then for all $0 < r < R < 1$, all $\alpha \in (0, -b)$, and all $q > n/(- b - \alpha)$ there exists a constant $C = C(n, b, R, r, q, \alpha)$ such that 
\[
\| \nabla_x' v\|_{C^{1, \alpha}(B_r^+\cup B'_r)} \le C \left(\|v\|_{H^1(B_{R}^+; y^b)} + 1 + \|\nabla_x' u\|_{L^q(B_{R}')} \right).
\]
\end{itemize}
Moreover, there exists $\beta \in (0, 1)$ such that $\partial_y^b v \in C^{0, \beta}{(B_r^+\cup B'_r)}$ regardless of the sign of $b$.
\end{prop}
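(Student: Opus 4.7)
The plan is to reduce matters to the Schauder theory of \Cref{STV1.5} and \Cref{4.5CS} applied to the second-order equation for $v = \Delta_b u$ established in \Cref{v-solves}. Since $\mathcal{L}_b$ and $\partial_y^b$ are both invariant under translations in the tangential variables, the natural strategy is to differentiate the equation for $v$ with respect to $x_i$, $i = 1, \ldots, n$, and control the resulting Neumann datum via the Lipschitz hypothesis \eqref{H5}.

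Concretely, for $0 < |h| \ll 1$ and $i = 1, \ldots, n$, I would introduce the tangential difference quotients
\[
w_{h, i}(X) \coloneqq \frac{v(X + h e_i) - v(X)}{h}
\]
and observe that, by the aforementioned translation invariance, $w_{h, i}$ is a weak solution in the sense of \Cref{ws2-def} to
\[
\mathcal{L}_b w_{h, i} = 0 \text{ in } B_{R'}^+, \qquad \partial_y^b w_{h, i} = \frac{f(\cdot + h e_i, u(\cdot + h e_i)) - f(\cdot, u(\cdot))}{h} \text{ on } B_{R'}',
\]
for any $R' < R$ and $|h|$ small enough. Combined with the local boundedness of $u$ from \Cref{Reg-cor}, the Lipschitz bound \eqref{H5} yields
\[
|\partial_y^b w_{h, i}(x)| \le C\left(1 + \left|\frac{u(x + h e_i) - u(x)}{h}\right|\right),
\]
and Nirenberg's difference-quotient estimate then gives the uniform bounds
\[
\|w_{h, i}\|_{L^2(B_{R'}^+; y^b)} \le \|v\|_{H^1(B_R^+; y^b)}, \qquad \|\partial_y^b w_{h, i}\|_{L^q(B_{R'}')} \le C\bigl(1 + \|\nabla_x' u\|_{L^q(B_R')}\bigr)
\]
for any $q \in [1, \infty)$. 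For each prescribed $\alpha < 1 - b$ (resp.\ $\alpha < -b$), selecting $q$ sufficiently large and invoking \Cref{STV1.5} applied to $w_{h, i}$ produces uniform $C^{0, \alpha}$ (resp.\ $C^{1, \alpha}$) estimates of exactly the form claimed. On the other hand, \Cref{FS-reg} --- whose hypotheses are satisfied since $f(\cdot, u) \in W^{1, \infty}(B_R')$ by \eqref{H5} and \Cref{Reg-cor} --- gives $v \in H^2_x(B_{R'}^+; y^b)$ locally, hence $w_{h, i} \to \partial_{x_i} v$ in $L^2(B_{R'}^+; y^b)$ as $h \to 0$. An Arzel\`a--Ascoli argument then identifies the limit in $C^{0, \alpha'}$ (resp.\ $C^{1, \alpha'}$) for any $\alpha' < \alpha$ with $\partial_{x_i} v$ and transfers the uniform bound; summing over $i = 1, \ldots, n$ yields assertions (i) and (ii).

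For the final statement on $\partial_y^b v$, I would apply \Cref{4.5CS} directly to $v$. By \Cref{Reg-cor}, $u \in C^{0, \alpha}(B_r^+)$ for some $\alpha \in (0, 1)$, so the Lipschitz bound \eqref{H5} gives $f(\cdot, u(\cdot)) \in C^{0, \alpha}(B_r')$. Since $v$ is also locally bounded by \Cref{Reg-cor}, \Cref{4.5CS} produces $\beta \in (0, 1)$ with $\partial_y^b v \in C^{0, \beta}(B_r^+)$. The only mildly delicate point of the scheme is the rigorous verification that each $w_{h, i}$ is an admissible weak solution with Schauder estimates that are genuinely uniform in $h$; this is handled by the tangential translation invariance of $\mathcal{L}_b$ together with Nirenberg's difference-quotient machinery, so no essentially new ideas are required beyond what has already been set up in \Cref{Prem-sec} and in the proof of \Cref{Lq-reg}.
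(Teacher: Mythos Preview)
Your proposal is correct and follows essentially the same route as the paper: tangential difference quotients of $v$ solve an $\mathcal{L}_b$-harmonic Neumann problem whose boundary datum is controlled in $L^q$ via \eqref{H5} and the difference-quotient bound for $u$, after which \Cref{STV1.5} yields uniform H\"older estimates and Arzel\`a--Ascoli passes them to $\nabla_x' v$; the conormal regularity then comes from \Cref{4.5CS}. The only cosmetic difference is that you invoke \Cref{FS-reg} to secure $v \in H^2_x$ before identifying the limit, whereas the paper simply relies on the fact that difference quotients always converge to the distributional derivative, so the Arzel\`a--Ascoli limit is automatically $\partial_{x_i} v$.
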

\begin{proof}
Let $r_1 \coloneqq (r + R)/2$ and observe that if for $\xi \in \RR^n \setminus \{0\}$ with $|\xi|$ sufficiently small we define the function
\[
v^{\xi}(X) \coloneqq \frac{v(x + \xi, y) - v(X)}{|\xi|},
\]
then $v^{\xi}$ is a weak solution to 
\[
\left\{
\arraycolsep=1.4pt\def\arraystretch{1.6}
\begin{array}{rll}
\mathcal{L}_b(v^{\xi}) = & 0 & \text{ in } B_{r_1}^+, \\
\partial_y^b v^{\xi} = & f^{\xi}  & \text{ on } B_{r_1}',
\end{array}
\right.
\]
where 
\[
f^{\xi}(x) \coloneqq \frac{f(x + \xi, u(x + \xi, 0)) - f(x, u(x, 0))}{|\xi|}.
\]
Notice also that, by \eqref{H5}, there exists a constant $C$ (potentially depending on $\|u\|_{L^{\infty}(B_{R})}$, but independent of $\xi$) such that 
\[
|f^{\xi}(x)| \le C \left(1 + \frac{|u(x + \xi, 0) - u(x, 0)|}{|\xi|}\right).
\]
Consequently, for all $q \in [1, \infty)$ we have that
\begin{equation}
\label{fxi-uniform}
\| f^{\xi} \|_{L^q(B_{r_1}')}^q \le C\left(1 +  \int_{B_{r_1}'} \left|\frac{u(x + \xi, 0) - u(x, 0)}{|\xi|}\right|^q\,dx \right) \le C\left( 1 + \|\nabla_x' u\|_{L^q(B_{R}')}^q\right).
\end{equation}
In particular, we are in a position to apply \Cref{STV1.5} and conclude that if $b \ge 0$ then $v^{\xi} \in C^{0, \alpha}({B_r^+\cup B'_r})$ with
\begin{align}
\|v^{\xi}\|_{C^{0, \alpha}({B_r^+\cup B'_r})} & \le C \left(\|v^{\xi}\|_{L^2(B_{r_1}^+; y^b)} + \|f^{\xi}\|_{L^q(B_{r_1}')} \right) \notag \\
& \le C \left(\|v\|_{H^1(B_{R}^+; y^b)} + 1 + \|\nabla_x' u\|_{L^q(B_{R}')} \right) \label{vxi-uniform}
\end{align}
for all $\alpha \in (0, 1 - b)$, provided that $q > n/(1 - b - \alpha)$; we note that in the last step we have used \eqref{fxi-uniform} and a classical estimate for the difference quotient of Sobolev functions. Observe that the inequality obtained in \eqref{vxi-uniform} is uniform with respect to $\xi$. Therefore, the Ascoli--Arzel\'a Theorem can then be used to show that $\nabla_x' v \in C^{0, \alpha}({B_r^+\cup B'_r})$ for all $\alpha \in (0, 1 - b)$. A similar argument yields the desired result when $b < 0$.

Finally, since $x \mapsto f(x, u(x, 0))$ is a bounded Lipschitz function, then it belongs to $C^{0, \sigma}(B_R')$ for all $\sigma \in (0, 1)$. In particular, \Cref{4.5CS} implies that $\partial_y^b v \in C^{0, \beta}({B_r^+\cup B'_r})$ for some $\beta \in (0, 1)$. This concludes the proof.
\end{proof}

When $b = 0$, we can derive a slight improvement of \Cref{Reg-cor} by applying \Cref{improved-reg}.
\begin{cor}
\label{Reg-cor-0}
Under the assumptions of \Cref{improved-reg}, assume that $b = 0$ and let $r < 1$. Then, there exists $\beta \in (0,1)$ such that ${(u,v) \in C^{3, \beta}(B_r^+\cup B'_r) \times C^{1, \beta}(B_r^+\cup B'_r)}$.
\end{cor}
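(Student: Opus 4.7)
For the $v$-component the claim follows immediately by combining the two parts of \Cref{improved-reg}: with $b = 0$, part $(i)$ gives $\nabla_x' v \in C^{0,\alpha}(B_r^+)$ for every $\alpha \in (0,1)$, while the concluding sentence gives $\partial_y v = \partial_y^b v \in C^{0,\beta_0}(B_r^+)$ for some $\beta_0 \in (0,1)$. Choosing $\alpha = \beta_0$, the full gradient $\nabla v$ is H\"older continuous of exponent $\beta_0$ up to the flat boundary, whence $v \in C^{1,\beta_0}(B_r^+)$.

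For the $u$-component, the starting point is \Cref{Reg-cor}, which for $b = 0$ yields $u \in C^{2,\alpha}(B_r^+)$ for every $\alpha \in (0,1)$. My plan is to gain the missing derivative via Schauder estimates for $\Delta$ applied through an even reflection across $\{y=0\}$, exploiting the condition $\partial_y u = 0$ on $B_1'$. A direct implementation fails, however: the even extension $\tilde v$ of $v$ has a jump of size $2f(\cdot, u)$ in its normal derivative along the hyperplane, so $\tilde v$ is merely continuous and not $C^{1,\beta}$, and one cannot apply Schauder directly to $\Delta u = v$ in the full ball. This mismatch between the even reflection of $u$ and the non-vanishing Neumann trace of $v$ is the only genuine obstacle.

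The remedy is to differentiate tangentially first. For each $i = 1, \dots, n$, the function $w_i \coloneqq \partial_{x_i} u$ inherits the homogeneous Neumann condition $\partial_y w_i = 0$ on $B_1'$, so its even extension $\tilde w_i$ satisfies $\Delta \tilde w_i = \widetilde{\partial_{x_i} v}$ weakly on the full ball. Since $\partial_{x_i} v \in C^{0,\alpha}$ up to $B_r'$ and even reflection across a hyperplane preserves H\"older continuity with the same exponent, part $(i)$ of \Cref{STV} (applied with $k = 0$) yields $\tilde w_i \in C^{2,\alpha}$ on a smaller ball, hence $\partial_{x_i} u \in C^{2,\alpha}(B_r^+)$ for every $\alpha \in (0,1)$ and every $i$. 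All third derivatives of $u$ involving at least one horizontal component are therefore H\"older continuous. The only remaining derivative, $\partial_y^3 u$, is recovered by differentiating the pointwise identity $\partial_y^2 u = v - \sum_{i=1}^n \partial_{x_i}^2 u$ once in $y$, giving
\[
\partial_y^3 u = \partial_y v - \sum_{i=1}^n \partial_y \partial_{x_i}^2 u,
\]
whose right-hand side lies in $C^{0,\beta_0}(B_r^+)$ by the preceding steps. Setting $\beta \coloneqq \beta_0$, I conclude $u \in C^{3,\beta}(B_r^+)$.
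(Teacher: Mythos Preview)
Your proof is correct and follows the same overall route as the paper: first combine the two conclusions of \Cref{improved-reg} (at $b=0$) to get $v \in C^{1,\beta}(B_R^+)$, and then upgrade $u$ from $C^{2,\alpha}$ to $C^{3,\beta}$ using that $\Delta u = v$ with $\partial_y u = 0$ on $B_R'$. The paper compresses the second step into a single appeal to ``classical regularity theory (see also \Cref{STV} $(i)$)'', i.e.\ Schauder estimates for the Neumann problem for $\Delta$; you instead spell this step out explicitly by differentiating tangentially (so that each $\partial_{x_i}u$ still has homogeneous Neumann data and \Cref{STV} $(i)$ applies after even reflection) and then recovering $\partial_y^3 u$ from the equation. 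Your observation that a direct even extension of $v$ need not be $C^1$ across $\{y=0\}$ (because $\partial_y v = f(\cdot,u)$ there) is correct and is precisely why the reference to \Cref{STV} $(i)$ in the paper should be read via tangential differentiation or, equivalently, as classical Neumann Schauder rather than a naive application with $k=1$.
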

\begin{proof}
Fix $r < R < 1$ and observe that by \Cref{improved-reg}, when $b = 0$, we have that $v \in C^{1, \beta}({B_R^+\cup B'_R})$ for some $\beta \in (0,1)$. The desired conclusion then follows from classical regularity theory (see also \Cref{STV} $(i)$).
\end{proof}

As one can readily check, the proof of \Cref{main-reg} follows by combining the results of \Cref{Reg-cor}, \Cref{improved-reg}, and \Cref{Reg-cor-0}.

\begin{lem}
\label{vH2}
Let $u$ be a weak solution to \eqref{bil-noBC} in the sense of \Cref{ws-def} and let $v \coloneqq \Delta_b u$. Assume that $f$ satisfies \eqref{H2}, \eqref{H4}, \eqref{H5}, and furthermore that $\nabla_x' f$ and $\partial_{\zeta}f$ are Carath\'eodory functions. Then we have 
\[
v \in H^2_x(B_r^+; y^b) \qquad \text{ and } \qquad \partial_y^b v \in H^1(B_r^+; y^{-b})
\]
for all $r \in (0, R)$, $R < 1$. 
\end{lem}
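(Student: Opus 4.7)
\textbf{Proof proposal for \Cref{vH2}.}
The plan is to obtain this as a direct application of the Sobolev regularity result \Cref{FS-reg} to the second-order Neumann problem \eqref{v-PDE} satisfied by $v = \Delta_b u$ on $B_R^+$ for some $R \in (r,1)$. In that problem the bulk source vanishes, so $\gamma_1 \equiv 0$ automatically satisfies the integrability hypothesis. Consequently, the whole argument reduces to verifying that the boundary datum $\gamma_2 \coloneqq f(\cdot, u(\cdot,0))$ lies in $W^{1,q_\star}(B_R')$, with $q_\star = 2n/(n+1-b)$.

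To this end, first I would fix $r < R < R_1 < 1$ and invoke \Cref{Reg-cor} (applied on $B_{R_1}^+$) to record that $u \in C^{2,\alpha}(\overline{B_R^+})$. In particular, $u(\cdot, 0) \in C^{2}(\overline{B_R'})$ and there exists $M > 0$ such that
\[
\|u(\cdot,0)\|_{L^\infty(B_R')} + \|\nabla_x u(\cdot,0)\|_{L^\infty(B_R')} \le M.
\]
Next, combining the local Lipschitz hypothesis \eqref{H5} with the Carath\'eodory character of $\nabla_x' f$ and $\partial_\zeta f$, one obtains a constant $C = C(M)$ such that
\[
|\nabla_x' f(x, \zeta)| + |\partial_\zeta f(x, \zeta)| \le C \qquad \text{for $\mathcal{L}^n$-a.e.\ } x \in B_1' \text{ and all } |\zeta| \le M,
\]
since the a.e.\ partial derivatives of a locally Lipschitz function are essentially bounded by its Lipschitz constant on the relevant box. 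Because $u(\cdot,0) \in C^{1}(\overline{B_R'})$ and $f$ is Lipschitz on $B_1' \times [-M,M]$, the chain rule for Sobolev compositions applies and gives
\[
\nabla_x \gamma_2(x) = \nabla_x' f(x, u(x,0)) + \partial_\zeta f(x, u(x,0))\, \nabla_x u(x,0) \qquad \text{for $\mathcal{L}^n$-a.e.\ } x \in B_R'.
\]
Together with the previous uniform bounds this yields $\|\nabla_x \gamma_2\|_{L^\infty(B_R')} \le C(M)(1 + M)$, hence $\gamma_2 \in W^{1,\infty}(B_R') \subset W^{1,q_\star}(B_R')$.

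With this integrability in place, the conclusion follows immediately from \Cref{FS-reg}: $v \in H^2_x(B_r^+; y^b)$ and $\partial_y^b v \in H^1(B_r^+; y^{-b})$. I do not anticipate a substantial obstacle in executing this plan. The only mildly delicate step is the justification of the chain rule for the composition $x \mapsto f(x, u(x,0))$; this is standard given that $u(\cdot,0)$ is $C^1$ and $f$ is locally Lipschitz in $(x,\zeta)$, and the explicit a.e.\ formula for $\nabla_x \gamma_2$ is guaranteed to be meaningful precisely by the extra Carath\'eodory hypothesis on $\nabla_x' f$ and $\partial_\zeta f$ added in the statement of the lemma.
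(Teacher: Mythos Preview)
Your proposal is correct and follows essentially the same approach as the paper: reduce to \Cref{FS-reg} by verifying that $\gamma_2(x) = f(x, u(x,0))$ belongs to $W^{1, \frac{2n}{n+1-b}}(B_R')$ via the chain rule, the regularity of $u$ from \Cref{Reg-cor}, and the Lipschitz bound \eqref{H5}. Your write-up is in fact somewhat more detailed than the paper's, particularly in justifying the chain rule and the a.e.\ boundedness of the partial derivatives of $f$.
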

\begin{proof}
In view of \Cref{FS-reg}, it is enough to show that $x \mapsto \gamma_2(x) \coloneqq f(x, u(x, 0))$ belongs to  $W^{1, \frac{2n}{n + 1 - b}}(B_R')$. Since
\[
\nabla_x' \gamma_2(x) = \nabla_x'f(x, u(x, 0)) + \partial_{\zeta}f(x, u(x, 0))\nabla_x'u(x, 0),
\]
the desired result readily follows from the regularity of $u$ (see \Cref{Reg-cor}) and \eqref{H5}.
\end{proof}

\begin{rmk}
\label{reg-v-rmk}
We conclude the section with some comments on the regularity of solutions and the techniques used. 

First, we recall that the standard pathway to proving H\"older regularity for solutions to \emph{stable} obstacle problems for the Laplacian (or its fractional counterparts) revolves around Morrey's Dirichlet growth theorem (see, for example, Theorem 3.5.2 in \cite{MR2492985}). A key step in obtaining the gradient estimate required to apply Morrey's theorem is a Caccioppoli inequality, typically derived by either showing that $u^{\pm}$ are subharmonic (or $b$-subharmonic, respectively), or by directly leveraging the fact that the quantities $f(u)u^+$ and $f(u)u^-$ have the correct sign (hence the distinction between stable and unstable cases). For more details on how these strategies have been successfully employed in previous works, we refer the reader to Section 3 of \cite{MR2995409}, Sections 4 and 5 of \cite{MR3348118}, Section 3 of \cite{MR4144102}, and Section 2 of \cite{DO23}.

In the present case, this approach does not seem to be viable due to the different structure of our fourth-order problem. Indeed, using any suitable modification of $v$ as a test function leads to a boundary term involving $f(\cdot, u)v$, which has no known sign. Similar difficulties arise when approaching the problem from an energy perspective, noting that $v$ is a minimizer of
\[
\mathcal{K}(w) \coloneqq \int_{B_r^+} y^b|\nabla w|^2\,dX + 2 \int_{B_r'} f(x, u)w\,dx
\]
over the class 
\[
\mathcal{A}(r, v) \coloneqq \{ w \in H^1(B_r^+; y^b) : w = v \text{ on } S_r^+ \}.
\]

We also mention that while we relied on a similar concept of stability in the form of \eqref{H3} for our existence result (see \Cref{MIN}), the assumption is not required for any of the results in this section.

Finally, it is worth noting that not only the approach described above cannot be directly applied, but subharmonicity may generally fail, as shown by the following one-dimensional example: let $b = 0$, $p = 2$, $\lambda_- = \lambda_+ = 1$, and set $u(x) \coloneqq -x^3 + 6$. Then $u$ satisfies 
\[
\left\{
\arraycolsep=1.4pt\def\arraystretch{1.6}
\begin{array}{rl}
u^{(iv)} = & 0 \qquad \mbox{on }(0,1), \\
u'''(0) = & -6 = \lambda_- u^-(0) - \lambda_+u^+(0), \\
u'(0) = & 0.
\end{array}
\right.
\]
Moreover $u > 0$ on $(0,1)$, but $u''(x) = -6x < 0$, thus showing that $u$ is superharmonic.
\end{rmk}

\section{Almgren's frequency formula}
\label{AFF-sec}
Almgren's frequency functions, defined as the local ratio of energy over mass, have been extensively used to study the asymptotic behavior of solutions to a wide range of problems. In the following we will consider the \emph{classical} frequency function, defined via 
\begin{equation}
\label{N0-def}
N_0(r, u, v) \coloneqq \frac{rD_0(r, u, v)}{H(r, u, v)},
\end{equation}
where 
\begin{equation}
\label{D0-def}
D_0(r, u, v) \coloneqq \int_{B_r^+} y^b(|\nabla u|^2 + |\nabla v|^2)\,dX
\end{equation}
and 
\begin{equation}
\label{H-def}
H(r, u, v) \coloneqq \int_{S_r^+} y^b(u^2 + v^2)\,d\Hh^n,
\end{equation}
as well as the \emph{perturbed} frequency function
\[
N(r, u, v) \coloneqq \frac{r D(r, u, v)}{H(r, u, v)},
\]
where
\begin{equation}
\label{D-def}
D(r, u, v) \coloneqq \int_{B_r^+} y^b(|\nabla u|^2 + |\nabla v|^2 + u \Delta_b u)\,dX + \int_{B_r'} (\partial_y^b v) v\,dx.
\end{equation}
The latter was introduced in \cite{FF20}, where the authors carried out a classification of blow-up profiles and used it to prove a unique continuation result for solutions to the fourth order problem $\Delta u = v$, $\Delta v = 0$, with $\partial_y u = 0$ and Neumann boundary coupling given by $\partial_y v = h u$ on the thin space. Here $h$ denotes a given function in $C^1(B_R')$.

As explained in \cite{FF20}, the proof of the monotonicity argument for $N$ is rather delicate due to the presence of the term 
\begin{equation}
\label{bob}
\int_{\partial B_r'} h u v\,d\Hh^{n - 1}
\end{equation}
in the expression for $\tfrac{d}{dr}D(r, u, v)$. In \cite{FF20}, estimates for this ``boundary of the boundary'' term are obtained via careful applications of the divergence theorem which ultimately allow one to recast it in terms of bulk and boundary terms that can be controlled (see Lemma 2.12 in \cite{FF20}). 

Although the techniques developed in \cite{FF20} for the case $f(\cdot, u) = hu$ suggest potential generalizations for instances where $h$ is not necessarily differentiable, the lack of differentiability in the second variable for the prototypical nonlinearities considered in our study of obstacle-type problems necessitates a different approach. For this reason, in the following we obtain a quasi-monotonicity result for the Almgren functional $N_0$ (see \Cref{AM-thm} below). It is worth noting that our proof continues to involve the modified energy $D(r, u, v)$, as it naturally arises in the expression for $H'(r, u, v)$ (see \Cref{D_0H-der} below). However, by considering the classical Dirichlet energy $D_0(r, u, v)$ we are able to obtain the desired estimates without ever having to consider derivatives of $D(r, u, v)$, thus avoiding some of the difficulties described above. 

\begin{rmk}
\label{DHA-comparison}
Our work, although related to \cite{MR4316741}, addresses different aspects beyond just the extension to more general elliptic operators. Here a word of caution is necessary:
\begin{itemize}
\item[$(i)$] The nonlinearity considered in \cite{MR4316741} is $f(u) \coloneqq \lambda^- (u^-)^{p - 1} - \lambda^+ (u^+)^{p - 1}$. The lack of differentiability mentioned earlier forced the authors to restrict their attention to the case $p \ge 3$. Specifically, in \cite{MR4316741} the problematic term in \eqref{bob} is estimated in terms of 
\[
\nabla f'(u) = -(p - 1)(p - 2)(- \lambda_{-}(u^-)^{p - 3} + \lambda_{+}(u^+)^{p - 3})\nabla u.
\]
Thus, when $p \ge 3$, $\nabla f'(u)$ is well-defined and can be estimated in terms of norms of the solution. Notably, the proof of Theorem 4.1 in \cite{MR4316741} requires $L^{\infty}$ bounds on $u$ (see also the proof of \Cref{P-lem} below). As mentioned in \Cref{reg-v-rmk}, it is unclear how to verify this assumption for all $p \ge 3$. 
\item[$(ii)$] Furthermore, in \cite{MR4316741} the authors obtain results for $p = 2$. These, however, rely on the additional assumption that $v = 0$ on $\{ u = 0 \}$ (see Remark 2.9 in \cite{MR4316741}) as a way to prevent problematic terms involving the integral $v$ over the set $\partial\{ u > 0\} \cap B_r^+$ from playing a role in the analysis (see, for example, Equation (4.9) in \cite{MR4316741}). To the best of our understanding, this assumption cannot be verified in general (see \Cref{reg-v-rmk}). Observe that this difficulty was not present in \cite{FF20}, where in view of the smoothness of $f$, applications of the divergence theorem do not yield such terms. 
\end{itemize} 
For these reasons, the techniques developed in this section (and more in general, the main results of this paper) focus on the case $p \in [2, p^\#)$ and differ from some of the arguments in \cite{MR4316741} even when $b = 0$ and $p = 2$ or $p \ge 3$. We recall here that $p^{\#}$ is the critical exponent defined in \eqref{H4}.
\end{rmk}

Next, we define 
\begin{equation}
\label{P-def}
P(r, u, v) \coloneqq \int_{B_r^+} y^b u \Delta_b u\,dX + \int_{B_r'} (\partial_y^b v) v\,dx
\end{equation}
so that $D(r, u, v) = D_0(r, u, v) + P(r, u, v)$. 

Throughout the rest, we assume that $u$ is a weak solution to \eqref{bil-noBC} in the sense of \Cref{ws-def}. To simplify the notation, we write $N_0(r)$ for $N_0(r, u, \Delta_b u)$ and use similar conventions for all other functionals defined above. In particular, we have that
\[
P(r) = \int_{B_r^+} y^b u v\,dX + \int_{B_r'} f(x, u) v\,dx.
\]

In the following we assume that $f$ satisfies more stringent versions of \eqref{H2} and \eqref{H4}.
\begin{enumerate}[label=(H.\arabic*$'$), ref=H.\arabic*$'$]\addtocounter{enumi}{1}
\item \label{H2'} Assume that $f \colon B_1' \times \RR \to \RR$ is Carath\'eodory and satisfies 
\[
|f(x, \zeta)| \le C|\zeta|^{p - 1}
\] 
for $\mathcal{L}^{n}$-a.e.\@ $x \in B_1'$ and all $\zeta \in \RR$.
\end{enumerate} 
\begin{enumerate}[label=(H.\arabic*$'$), ref=H.\arabic*$'$]\addtocounter{enumi}{3}
\item \label{H4'} Let $p$ be as in \eqref{H2'}. Assume that 
\[
2 \le p < p^{\#} \coloneqq \frac{2^*_b \cdot 2^{**}_b }{4} + 1,
\]
where $2^*_b$ and $2^{**}_b$ denote the fractional Sobolev exponents defined in \Cref{SII}. 
\end{enumerate} 


We then have the following result.

\begin{lem}
\label{P-lem}
Let $D_0, H$, and $P$ be given as in \eqref{D0-def}, \eqref{H-def}, and \eqref{P-def}, respectively. Assume that $f$ satisfies \eqref{H2'}, \eqref{H4'}, and \eqref{H5}. Then
\[
|P(r)| \le Cr^{-b}(r D_0(r) + H(r)),
\]
where $C$ is a constant that is independent of $r$ but that possibly depends on $\|u\|_{L^{\infty}(B_r')}$ when $p > 2$.
\end{lem}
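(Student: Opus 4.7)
The plan is to estimate the two terms in
\[
P(r) = \int_{B_r^+} y^b u v\,dX + \int_{B_r'} f(x,u) v\,dx
\]
separately, matching each against the correct combination $r^{-b}(rD_0(r) + H(r))$ by a Cauchy--Schwarz step followed by the weighted Poincar\'e (\Cref{PI}) or trace (\Cref{TI}) inequality.

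For the bulk term I would apply Cauchy--Schwarz and then the Poincar\'e inequality in \Cref{PI} to each of $u$ and $v$, obtaining
\[
\left|\int_{B_r^+} y^b u v\,dX\right| \le \tfrac{1}{2}\int_{B_r^+} y^b(u^2 + v^2)\,dX \le \tfrac{1}{2(n+b)}\bigl(r^2 D_0(r) + r H(r)\bigr).
\]
Since $b\in(-1,1)$ and $r<1$ one has $1+b>0$, hence $r\le r^{-b}$, and therefore $r^2 D_0(r) + rH(r) \le r^{-b}(rD_0(r)+H(r))$, which is the desired bound for the first summand.

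For the boundary term I would first reduce to the linear case: when $p=2$ condition \eqref{H2'} already gives $|f(x,u)|\le C|u|$, while when $p>2$ one writes $|f(x,u)|\le C|u|^{p-1}\le C\|u\|_{L^\infty(B_r')}^{p-2}|u|$, which is finite since \Cref{main-reg} (via \eqref{H2'} and \eqref{H4'}) guarantees local boundedness of $u$. Then Cauchy--Schwarz yields
\[
\left|\int_{B_r'} f(x,u) v\,dx\right| \le C\int_{B_r'} |u||v|\,dx \le \tfrac{C}{2}\int_{B_r'} (u^2 + v^2)\,dx,
\]
with $C$ depending on $\|u\|_{L^\infty(B_r')}$ when $p>2$. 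Applying the trace inequality of \Cref{TI} to both $u$ and $v$ gives
\[
\int_{B_r'}(u^2+v^2)\,dx \le C\bigl(r^{1-b}D_0(r) + r^{-b}H(r)\bigr) = Cr^{-b}\bigl(rD_0(r)+H(r)\bigr),
\]
which is the desired bound for the second summand. Summing the two estimates yields the lemma.

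I do not anticipate a serious obstacle here; the only subtlety is conceptual rather than technical, namely the need to absorb the nonlinearity into a linear quantity via the $L^\infty$ bound on $u$ so that the standard Poincar\'e/trace machinery applies uniformly for the whole range $p \in [2, p^{\#})$. The dependence of $C$ on $\|u\|_{L^\infty(B_r')}$ when $p>2$ is intrinsic to this reduction and matches the statement of the lemma.
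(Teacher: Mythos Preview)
Your proof is correct and follows essentially the same approach as the paper: split $P(r)$ into the bulk and boundary terms, control the bulk term by Cauchy--Schwarz followed by the Poincar\'e inequality (\Cref{PI}), and control the boundary term by reducing $|f(x,u)|$ to $C|u|$ via the $L^{\infty}$ bound (when $p>2$) and then applying the trace inequality (\Cref{TI}). The only difference is that you spell out the intermediate step $r \le r^{-b}$ needed to absorb the bulk estimate into the final form, which the paper leaves implicit.
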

\begin{proof}
We begin by observing that by \Cref{PI} we have that
\begin{equation}
\label{P-est1}
\left|\int_{B_r^+} y^b u v \,dX\right| \le \frac{1}{2} \int_{B_r^+} y^b(u^2 + v^2)\,dX \le Cr(r D_0(r) + H(r)).
\end{equation}
Moreover, notice that \eqref{H2'}, \eqref{H4'}, and \Cref{TI} can be used to show that 
\begin{equation}
\label{P-est2}
\left|\int_{B_r'} f(x, u)v\,dx\right| \le C \int_{B_r'}|u|^{p - 1}|v|\,dx \le Cr^{-b}(r D_0(r) + H(r)),
\end{equation}
where for $p > 2$ the constant $C$ in the last inequality depends also on $\|u\|_{L^{\infty}(B_r')}^{p - 2}$ (which, we recall, is uniformly bounded by \Cref{Reg-cor}). Combining \eqref{P-est1} and \eqref{P-est2} yields the desired inequality.
\end{proof}

\begin{lem}
\label{D-ibp}
Let $D$ be as in \eqref{D-def}. Then, for $\mathcal{L}^1$-a.e.\@ $r \in (0, 1)$ we have that 
\[
D(r) = \int_{S_r^+} y^b(uu_{\nu} + vv_{\nu})\,d\Hh^n.
\]
\end{lem}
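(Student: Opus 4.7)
The plan is to apply the divergence theorem separately to the vector fields $y^b u \nabla u$ and $y^b v \nabla v$ on $B_r^+$, and to let the boundary conditions satisfied by $u$ and $v$ on $B_r'$ dispose of the contributions on the flat part of $\partial B_r^+$. For the $u$-piece, the computation
\[
\di(y^b u \nabla u) = y^b |\nabla u|^2 + u \, \di(y^b \nabla u) = y^b \bigl(|\nabla u|^2 + u\,\Delta_b u \bigr)
\]
combined with $\partial_y^b u = 0$ on $B_r'$ (so that the outer-normal flux on the flat portion vanishes in the limit $y\to 0^+$) should give
\[
\int_{B_r^+} y^b \bigl(|\nabla u|^2 + u\,\Delta_b u\bigr)\,dX = \int_{S_r^+} y^b u u_\nu\,d\Hh^n.
\]
For the $v$-piece, since $v = \Delta_b u$ solves $\mathcal{L}_b(v) = 0$ in $B_r^+$ (\Cref{v-solves}), an analogous computation yields $\di(y^b v \nabla v) = y^b |\nabla v|^2$. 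On the flat portion $B_r'$ the outward normal is $-e_{n+1}$, so its contribution is precisely $-\int_{B_r'} v\,\partial_y^b v\,dx$, producing
\[
\int_{B_r^+} y^b |\nabla v|^2\,dX = \int_{S_r^+} y^b v v_\nu\,d\Hh^n - \int_{B_r'} v\,\partial_y^b v\,dx.
\]
Adding the two identities and recalling the definition \eqref{D-def} immediately gives the claim.

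The main technical point is justifying the use of the divergence theorem given the degenerate/singular weight $y^b$ and the a-priori only Sobolev regularity in the $y$-variable. I would proceed by approximation: integrate instead over $B_r^+ \cap \{y > \tau\}$, where the vector fields are smooth in the interior, apply the classical divergence theorem, and then let $\tau \to 0^+$. Passing to the limit on the spherical cap $S_r^+ \cap \{y > \tau\}$ is immediate from dominated convergence, while the behavior of the flux across $\{y = \tau\} \cap B_r$ is controlled by the fact that $\partial_y^b u = 0$ and $\partial_y^b v = f(\cdot,u) \in L^2(B_r')$ admit honest traces; this uses $v \in H^2_x(B_r^+; y^b)$ and $\partial_y^b v \in H^1(B_r^+; y^{-b})$ from \Cref{vH2}, together with the $C^{2,\alpha}$ (or better) regularity of $u$ from \Cref{Reg-cor}, so that $u\, \partial_y u$ and $v\, y^b \partial_y v$ converge in $L^1(B_r')$ as $y\to 0^+$ to the expected boundary expressions.

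The qualifier ``for $\mathcal{L}^1$-a.e. $r$'' is explained by the integrability on the sphere: since $y^b(|\nabla u|^2 + |\nabla v|^2) \in L^1(B_1^+)$, the coarea formula guarantees that for almost every $r \in (0,1)$ the restrictions $y^{b/2}\nabla u|_{S_r^+}$ and $y^{b/2}\nabla v|_{S_r^+}$ belong to $L^2(S_r^+)$, so that $u u_\nu$ and $v v_\nu$ are integrable against $y^b\,d\Hh^n$ and the right-hand side of the claimed identity is finite. For such $r$ all the limit passages above are legitimate, and the identity holds. I expect this last integrability/a.e.\@ sphere step, together with the careful treatment of the trace as $y \to 0^+$ in the singular case $b<0$, to be the only genuine obstacle; once it is handled, the rest is a routine application of Green's identity.
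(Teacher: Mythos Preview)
Your proposal is correct and follows essentially the same approach as the paper: first use the coarea formula to see that $r\mapsto\int_{S_r^+} y^b(uu_\nu+vv_\nu)\,d\Hh^n$ is in $L^1$ and hence well-defined for a.e.\ $r$, then apply the divergence theorem to the fields $y^b u\nabla u$ and $y^b v\nabla v$ on $B_r^+$ and use the boundary conditions on $B_r'$. The paper's write-up is terser (it simply asserts that the identity follows from the divergence theorem once the spherical integrand is known to be integrable), whereas you spell out the $\{y>\tau\}$ approximation and the regularity inputs from \Cref{Reg-cor} and \Cref{vH2} needed to pass to the limit, which is a welcome clarification rather than a different argument.
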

\begin{proof}
For every $R < 1$ we have that 
\begin{equation}
\label{D-slice}
\int_{B_R^+} y^b\left|u \nabla u \cdot \frac{X}{|X|} + v \nabla v \cdot \frac{X}{|X|}\right|\,dX = \int_0^R \int_{S_r^+} y^b |u u_{\nu} + v v_{\nu}|\,d\Hh^n\,dr.
\end{equation}
Since the left-hand side of \eqref{D-slice} is bounded, if we set 
\begin{equation}
\label{d-def}
d(r) \coloneqq \int_{S_r^+} y^b(u u_{\nu} + v v_{\nu})\,d\Hh^n,
\end{equation}
we obtain that $d \in L^1(0, R)$ and $d(r)$ is therefore well-defined for $\mathcal{L}^1$-a.e.\@ $r \in (0, R)$. In turn, for every such $r$, the desired identity follows from an application of the divergence theorem. This concludes the proof.
\end{proof}

The next result shows that, without loss of generality, we can assume that $H(r) > 0$ for all $r$ sufficiently small. This, in turn, ensures that the frequency functions $N_0$ and $N$ are well-defined. 

\begin{lem}
\label{H>0}
Under the assumptions of \Cref{P-lem}, there exists $r_0 \in (0, 1)$ such that either $H(r) > 0$ for all $r < r_0$ or $u$ and $v$ are identically equal to zero in $B_{r_0}^+$.
\end{lem}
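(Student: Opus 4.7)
The strategy is to derive a differential inequality for $H(r)$ based on the estimate for $P(r)$ in \Cref{P-lem}, and then to invoke a Gronwall-type argument together with the continuity of $u$ and $v$ to obtain the dichotomy.

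First, by rescaling onto the unit hemisphere $S_1^+$, differentiating under the integral, and invoking \Cref{D-ibp}, I would show that $H$ is absolutely continuous on compact subsets of $(0,1)$ and satisfies
\[
H'(r) = \frac{n+b}{r} H(r) + 2 D(r)
\]
for $\mathcal{L}^1$-a.e.\@ $r \in (0, 1)$. Writing $D(r) = D_0(r) + P(r)$ and combining this with the estimate $|P(r)| \le C r^{-b}(r D_0(r) + H(r))$ from \Cref{P-lem}, one obtains
\[
D(r) \ge (1 - Cr^{1-b}) D_0(r) - Cr^{-b}H(r).
\]
Since $1 - b > 0$, there is some $\rho \in (0, 1)$ small enough that $1 - Cr^{1-b} \ge 1/2$ whenever $r < \rho$. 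Dropping the non-negative contribution from $D_0$ then yields
\[
H'(r) \ge \Bigl(\frac{n+b}{r} - 2C r^{-b}\Bigr) H(r) \ge -2C r^{-b} H(r) \quad \text{for a.e.\@ } r \in (0, \rho).
\]

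Next, I would introduce the weight $\phi(r) \coloneqq \exp\bigl(\tfrac{2C r^{1-b}}{1-b}\bigr)$, which is well-defined near $0$ precisely because $b < 1$ makes $s^{-b}$ integrable at the origin. The above inequality translates into $(H\phi)'(r) \ge 0$ a.e.\@ on any subinterval of $(0, \rho)$ on which $H$ is strictly positive. A standard continuity argument then shows that if $H(r_*) = 0$ for some $r_* \in (0, \rho)$, then in fact $H \equiv 0$ on $(0, r_*]$. Indeed, suppose for contradiction that $H(r_1) > 0$ for some $r_1 < r_*$; let $\tau \coloneqq \inf\{r \in [r_1, r_*] : H(r) = 0\}$, so that $H > 0$ on $[r_1, \tau)$ and $H(\tau) = 0$ by continuity of $H$ (which is a consequence of \Cref{Reg-cor}). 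The monotonicity of $H\phi$ on $[r_1, \tau)$, combined with continuity at $\tau$, yields $0 = H(\tau)\phi(\tau) \ge H(r_1)\phi(r_1) > 0$, a contradiction.

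To conclude, set $r_* \coloneqq \sup\{r \in (0, \rho) : H(r) = 0\}$, with the convention $\sup \emptyset = 0$. If $r_* = 0$, then $H > 0$ throughout $(0, \rho)$ and the first alternative of the lemma holds with $r_0 = \rho$. Otherwise $r_* > 0$, and by the previous step $H \equiv 0$ on $(0, r_*]$; this means $u = v = 0$ pointwise on every hemisphere $S_r^+$ with $r \in (0, r_*]$, and the continuity of $u$ and $v$ on $\overline{B_{r_*}^+}$ (again from \Cref{Reg-cor}) forces $u \equiv v \equiv 0$ in $B_{r_*}^+$, giving the second alternative with $r_0 = r_*$. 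The main technical obstacle is the control of the cross term $P(r)$, which is already provided by \Cref{P-lem}; the remainder is a standard Gronwall/continuity argument that crucially depends on the integrability of $s^{-b}$ near the origin, guaranteed by $b \in (-1, 1)$.
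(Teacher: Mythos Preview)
Your argument is correct and in fact more elementary than the paper's. You derive the differential inequality $H'(r) \ge -2Cr^{-b}H(r)$ from the identity $H'(r) = \tfrac{n+b}{r}H(r) + 2D(r)$ (this is \eqref{H'formula}, whose proof only needs $u,v\in H^1(B_R^+;y^b)$ and \Cref{D-ibp}) together with \Cref{P-lem}, and then use a Gronwall/integrating-factor argument to conclude that a single zero of $H$ propagates to the whole interval $(0,r_*]$; the continuity of $u,v$ from \Cref{Reg-cor} then gives $u\equiv v\equiv 0$ in $B_{r_*}^+$. One small simplification: since $H\ge 0$ and $H\phi\in W^{1,1}_{\loc}$, the inequality $(H\phi)'\ge 0$ holds a.e.\ on all of $(0,\rho)$ without restricting to intervals where $H>0$, so the contradiction step can be replaced by a direct application of monotonicity.

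The paper proceeds differently. It fixes $r_0$ with $1-Cr_0^{1-b}>0$ and argues that if $H(r)=0$ at a single $r<r_0$, then by averaging $D_0(\rho)=d(\rho)-P(\rho)$ over $(r,r+\delta)$ and letting $\delta\to 0^+$ one obtains $D_0(r)=0$ at that point, hence $u\equiv v\equiv 0$ in $B_r^+$; a classical unique-continuation theorem for elliptic operators then extends this to $B_{r_0}^+$. To justify passing to the limit at the chosen $r$, the paper adds a separate step showing that $d(r)=\int_{S_r^+}y^b(uu_\nu+vv_\nu)\,d\Hh^n$ is continuous in $r$ (using the $C^{0,\alpha}$ regularity from \Cref{improved-reg}), so that every $r$ is a Lebesgue point. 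Your Gronwall route bypasses both of these ingredients---no Lebesgue-point analysis for $d$ and no external unique-continuation result---at the modest cost that the radius $r_0$ produced in the second alternative may depend on the solution (it is $r_*$ rather than the a-priori $\rho$). For the statement as written and for its use in defining $N_0$ on a small interval, this is harmless.
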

\begin{proof} We divide the proof into two steps.
\newline
\textbf{Step 1:} Let $C$ be the constant in \Cref{P-lem}. Then we can find $r_0 > 0$ such that 
\[
0 < 1 - Cr_0^{1 - b}.
\]
Arguing by contradiction, assume that $u$ and $v$ are not both identically equal to zero in $B_{r_0}^+$ and that there exists $r < r_0$ such that $H(r) = 0$. In this step, we assume that $r$ is a Lebesgue point of both $d$ and $H$; this assumption will be removed in the next step by an application of \Cref{main-reg}. Let $\delta > 0$ be such that $r + \delta < r_0$. Since by \Cref{D-ibp} we have that $D(\rho) = d(\rho)$ for $\mathcal{L}^1$-a.e.\@ $\rho \in (r, r + \delta)$ (see also \eqref{d-def}), it follows from \Cref{P-lem} that
\[
\fint_{r}^{r + \delta} D_0(\rho)\,d\rho = \fint_{r}^{r + \delta} (d(\rho) - P(\rho))\,d\rho \le \fint_{r}^{r + \delta} d(\rho)d\rho + \fint_{r}^{r + \delta} C\rho^{-b}(\rho D_0(\rho) + H(\rho))\,d\rho.
\]
Observe that rearranging the terms in the previous inequality yields 
\begin{equation}
\label{avg-d}
\fint_{r}^{r + \delta} (1 - C \rho^{1 - b})D_0(\rho)\,d\rho \le \fint_{r}^{r + \delta} d(\rho)\,d\rho + \fint_{r}^{r + \delta} C\rho^{-b}H(\rho)\,d\rho.
\end{equation}
Using the fact that $\rho \mapsto D_0(\rho)$ is non-decreasing and in view of our choice of $r_0$, by letting $\delta \to 0^+$ in \eqref{avg-d} we obtain that
\begin{align*}
(1 - C r_0^{1 - b}) D_0(r) & \le \limsup_{\delta \to 0^+} \fint_{r}^{r + \delta} (1 - C \rho^{1 - b})D_0(\rho)\,d\rho \\
& \le \lim_{\delta \to 0^+} \left(\fint_{r}^{r + \delta} d(\rho)\,d\rho + \fint_{r}^{r + \delta} C\rho^{-b}H(\rho)\,d\rho\right) = d(r) + Cr^{-b}H(r) = d(r).
\end{align*}
Observe that $H(r) = 0$ implies that $(u, v) \equiv 0$ on $S_r^+$. Then $d(r) = 0$ and therefore, recalling that $D_0$ is nonnegative, we must also have that $D_0(r) = 0$. In particular, since $u$ and $v$ vanish on $S_r^+$, we can conclude that $u, v \equiv 0$ in $B_r^+$. By classical unique continuation for elliptic operators with smooth coefficients (see, for example, Theorem 3 in \cite{MR1159832}) we obtain that $(u, v) \equiv 0$ in $B_{r_0}^+ \cap \{ y > \tau \}$ for all $\tau > 0$. Letting $\tau \to 0^+$ leads to a contradiction.
\newline
\textbf{Step 2:} To complete the proof, it is enough to show that every $r < r_0$ is a Lebesgue point for $d$ and $H$. We establish the result for $d$. The argument for $H$ is analogous and involves simpler computations, so we omit the details. To this end, let $\nu = X/|X|$ and observe that by \Cref{improved-reg}, for $b \le 0$ there exists $\alpha > 0$ such that $uu_{\nu} + vv_{\nu} \in C^{0, \alpha}(B_{r_0}^+ \cup B_{r_0}')$; in order to simplify the notation, here we use $w$ to denote this function. Note also that, by a change of variables, 
\[
d(r) = r^{n + b} \int_{S_1^+} y^b w(rX)\,d\Hh^n.
\]
In turn, for every $r_1, r_2 \in (0, r_0)$ we have that 
\begin{align*}
|d(r_1) - d(r_2)| & = \left|r_1^{n + b} \int_{S_1^+} y^b w(r_1X)\,d\Hh^n - r_2^{n + b} \int_{S_1^+} y^b w(r_2X)\,d\Hh^n\right| \\
& \le r_1^{n + b}\int_{S_1^+}y^b|w(r_1X) - w(r_2X)|\,d\Hh^n + \left|r_1^{n + b} - r_2^{n + b}\right|\int_{S_1^{+}}y^b|w(r_2X)|\,d\Hh^n \\
& \le \left(Cr_1^{n + b}|r_1 - r_2|^{\alpha} + \left|r_1^{n + b} - r_2^{n + b}\right|\|w\|_{L^{\infty}(B_{r_0}^+)}\right) \int_{S_1^+}y^b\,d\Hh^n.
\end{align*}
This readily implies that $d \in C(0, r_0)$, and the desired result follows in this case. Similarly, another application of \Cref{improved-reg} in the case $b > 0$ yields the existence of $\alpha > 0$ such that $y^b(uu_{\nu} + vv_{\nu}) \in C^{0, \alpha}(B_{r_0}^+ \cup B_{r_0}')$. The remainder of the argument follows as in the previous case. This completes the proof.
\end{proof}

\begin{lem}
\label{D_0H-der}
Under the assumptions of \Cref{vH2}, let $D_0$ and $H$ be as in \eqref{D0-def} and \eqref{H-def}, respectively. Then, $D_0, H \in W^{1, 1}_{\loc}(0, R)$ for all $R < 1$. Moreover, we have that 
\begin{multline}
\label{D0'-Rellich}
D_0'(r) = \frac{n + b - 1}{r}D_0(r) + 2 \int_{S_r^+} y^b(u_{\nu}^2 + v_{\nu}^2)\,d\Hh^n \\ - \frac{2}{r} \int_{B_r^+}y^b(X \cdot \nabla u)v\,dX - \frac{2}{r}\int_{B_r'}(x \cdot \nabla_x' v)f(x, u)\,dx
\end{multline}
and 
\begin{equation}
\label{H'formula}
H'(r) = \frac{n + b}{r}H(r) + 2D(r)
\end{equation}
hold for $\mathcal{L}^1$-a.e.\@ $r \in (0, R)$.
\end{lem}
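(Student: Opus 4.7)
The plan is to prove the two formulas separately. For $H'$ I would proceed by a direct rescaling argument, while for $D_0'$ the key ingredient is the Rellich identity \eqref{RID} combined with the coarea formula.

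For $H$, I would begin by freezing the domain via the change of variables $Y = rX$, writing
\[
H(r) = r^{n+b} \int_{S_1^+} y^b \bigl( u(rX)^2 + v(rX)^2 \bigr)\,d\Hh^n(X).
\]
Since $u$ and $v$ enjoy the classical regularity provided by \Cref{Reg-cor}, differentiation under the integral sign is routine and yields both $H \in W^{1,1}_{\loc}(0, R)$ and the formula
\[
H'(r) = \frac{n+b}{r} H(r) + 2 r^{n+b} \int_{S_1^+} y^b \bigl( (u \nabla u + v \nabla v) \cdot X \bigr)(rX)\,d\Hh^n(X).
\]
Changing variables back and using $X/|X| = \nu$ on $S_r^+$, the second term becomes $2 \int_{S_r^+} y^b (u u_\nu + v v_\nu)\,d\Hh^n$, which by \Cref{D-ibp} equals $2 D(r)$. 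This gives \eqref{H'formula}.

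For $D_0$, the coarea formula immediately provides $D_0 \in W^{1,1}_{\loc}(0, R)$ together with $D_0'(r) = \int_{S_r^+} y^b (|\nabla u|^2 + |\nabla v|^2)\,d\Hh^n$ for $\mathcal{L}^1$-a.e.\ $r$. The plan is then to apply the Rellich identity \eqref{RID} separately with $w = u$ and with $w = v$, and to sum the two resulting equations. The Neumann conditions $\partial_y^b u = 0$ and $\partial_y^b v = f(\cdot, u)$, together with $\Delta_b u = v$ and $\Delta_b v = 0$, annihilate the boundary term $\int_{B_r'}(\nabla_x' u \cdot x)\partial_y^b u\,dx$ and the bulk term involving $\Delta_b v$. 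A straightforward rearrangement then converts the sum into \eqref{D0'-Rellich}.

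The one technical point requiring care is verifying the hypotheses of \Cref{Rellich} for each of $u$ and $v$. For $v$, \Cref{vH2} directly supplies $v \in H^2_x(B_r^+; y^b)$ and $\partial_y^b v \in H^1(B_r^+; y^{-b})$. For $u$, I would observe that $u$ is itself a weak solution of the second-order Neumann problem $\mathcal{L}_b u = y^b v$ with vanishing conormal datum; since $v \in L^2(B_R^+; y^b)$, \Cref{FS-reg} applies with $\gamma_1 = v$ and $\gamma_2 \equiv 0$ and yields $u \in H^2_x(B_r^+; y^b)$ together with $\partial_y^b u \in H^1(B_r^+; y^{-b})$. Once these regularity facts are in hand, no serious obstacle remains: both claimed formulas reduce to identities, and the $W^{1,1}_{\loc}$ assertion is immediate from the locally integrable expressions for $H'$ and $D_0'$.
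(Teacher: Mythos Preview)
Your proposal is correct and follows essentially the same route as the paper: coarea for $D_0'$ followed by the Rellich identity \eqref{RID} applied to $u$ and $v$ separately, and a rescaling/differentiation argument for $H'$ combined with \Cref{D-ibp}. Your explicit verification of the Rellich hypotheses for $u$ via \Cref{FS-reg} is a detail the paper leaves implicit (it only cites \Cref{vH2}, which strictly speaking treats $v$), though the same conclusion follows even more directly from the classical regularity of $u$ in \Cref{Reg-cor}.
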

\begin{proof}
Using the fact that $u, v \in H^1(B_R^+; y^b)$ and arguing as in the proof of \Cref{D-ibp}, we obtain that
\begin{equation}
\label{D_0'}
D_0'(r) = \int_{S_r^+} y^b(|\nabla u|^2 + |\nabla v|^2)\,d\Hh^n
\end{equation}
for $\mathcal{L}^1$-a.e.\@ $r \in (0, R)$. Also, observe that in view of \Cref{vH2}, the boundary conditions satisfied by $u$ and $v$ (see \eqref{bil}), and \eqref{D_0'}, an application of \Cref{Rellich} to both functions $u$ and $v$ implies \eqref{D0'-Rellich}. Next, as can be readily verified through direct computations (see also Lemma 3.8 in \cite{MR3169789}), for $\mathcal{L}^1$-a.e.\@ $r \in (0, R)$, we have that
\[
H'(r) = \frac{n + b}{r} H(r) + 2 \int_{S_r^+} y^b(u u_{\nu} + v v_{\nu})\,d\Hh^n = \frac{n + b}{r} H(r) + 2 D(r),
\]
where in the last step we have used \Cref{D-ibp}. This concludes the proof.
\end{proof}

In the following we assume that $f$ satisfies a slightly more stringent version of \eqref{H5}.
\begin{enumerate}[label=(H.\arabic*$'$), ref=H.\arabic*$'$]\addtocounter{enumi}{4}
\item \label{H5'} $f \colon B_1' \times \RR \to \RR$ is locally Lipschitz-continuous in the sense of \eqref{H5}. Moreover, $\nabla_x' f$ and $\partial_{\zeta} f$ are Carath\'eodory functions with the property that for every $M > 0$ there exists a constant $C > 0$ such that 
\[
|\nabla_x' f(x, \zeta)| \le C|\zeta| \qquad \text{ and } \qquad |\partial_{\zeta} f(x, \zeta)| \le C
\]
for $\mathcal{L}^n$-a.e. $x \in B_1'$ and all $\zeta \in \RR$ with $|\zeta| \le M$.
\end{enumerate} 

With these results at hand, we can now state and prove the main theorem of the section.

\begin{thm}
\label{AM-thm}
Let $u$ be a weak solution to \eqref{bil-noBC} in the sense of \Cref{ws-def}. Assume that $b \le 0$ and let $N_0$ be given as in \eqref{N0-def}. Furthermore, assume that $f$ satisfies \eqref{H2'}, \eqref{H4'}, and \eqref{H5'}. Let $r_0$ be as in \Cref{H>0}. Then there exists a constant $C > 0$ such that 
\[
r \mapsto - \frac{1}{1 + N_0(r)} + Cr
\] 
is non-decreasing in $(0, r_0)$. In particular, there exists $\mu \in [0, \infty]$ such that
\begin{equation}\label{frequency_def}
\lim_{r \to 0^+} N_0(r) = \mu.
\end{equation}
\end{thm}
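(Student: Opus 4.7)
The plan is to establish the a.e.\ differential inequality $N_0'(r) \geq -C(1 + N_0(r))^2$ on $(0, r_0)$; dividing by $(1+N_0)^2$ then recognizes the left-hand side as the derivative of $-1/(1+N_0)$ and gives the claimed monotonicity at once. The existence of $\mu \in [0, \infty]$ follows because $r \mapsto -1/(1+N_0(r)) + Cr$ is bounded on $(0, r_0)$, taking values in $[-1, Cr_0)$, and $N_0$ is well-defined and nonnegative by \Cref{H>0}. My starting point is the logarithmic differentiation $N_0'/N_0 = 1/r + D_0'/D_0 - H'/H$; substituting the expressions from \Cref{D_0H-der} and using the cancellation $1 + (n+b-1) - (n+b) = 0$ among the homogeneous contributions, the identity collapses to
\[
\tfrac{1}{2} N_0'(r) H(r) = r \!\int_{S_r^+}\! y^b (u_\nu^2 + v_\nu^2) \, d\Hh^n - N_0(r) D(r) - \!\int_{B_r^+}\!y^b (X \cdot \nabla u) v \, dX - \!\int_{B_r'}\! (x \cdot \nabla_x' v) f(x, u) \, dx.
\]

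Next I would extract the main positive contribution via Cauchy--Schwarz: rewriting $D(r) = \int_{S_r^+} y^b(uu_\nu + vv_\nu)\,d\Hh^n$ via \Cref{D-ibp} yields $D(r)^2 \leq H(r) \int_{S_r^+} y^b(u_\nu^2 + v_\nu^2)\,d\Hh^n$. Combining this with the decomposition $D = D_0 + P$ and the identity $rD_0/H = N_0$ transforms the first two terms of $\tfrac{1}{2} N_0' H$ into $N_0 P + rP^2/H$, which by \Cref{P-lem} and the boundedness of $r^{-b}$ and $r^{1 - 2b}$ on $(0, r_0)$ (crucially using $b \leq 0$) are controlled in absolute value by $C(1 + N_0)^2 H$. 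The bulk error $\int_{B_r^+} y^b (X \cdot \nabla u) v\,dX$ is handled by a Young inequality and the Poincar\'e estimate \Cref{PI}, giving $\tfrac{1}{H} | \int_{B_r^+}y^b (X \cdot \nabla u) v\,dX | \leq C(1 + N_0)$.

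The main obstacle is the boundary integral $\int_{B_r'}(x \cdot \nabla_x' v) f(x, u)\,dx$, since a direct H\"older or trace estimate on $\nabla_x' v$ does not produce a bound that scales correctly with $H$ as $r \to 0^+$. My strategy is to integrate by parts on $B_r'$,
\[
\int_{B_r'}\!(x \cdot \nabla_x' v) f(x, u) \, dx = r\!\int_{\partial B_r'}\!v f(x,u) \, d\Hh^{n-1} - n\!\int_{B_r'}\!v f(x,u) \, dx - \!\int_{B_r'}\!(xv) \cdot \nabla_x'\bigl[f(x, u(x, 0))\bigr] dx,
\]
where the chain rule gives $\nabla_x'[f(x, u(x,0))] = (\nabla_x' f)(x, u) + \partial_\zeta f(x, u) \nabla_x' u(x, 0)$. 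The middle term has the same shape as the boundary piece of $P(r)$ and is controlled by \Cref{P-lem}. The third is dominated, using the quantitative bounds in \eqref{H5'} on $\nabla_x' f$ and $\partial_\zeta f$ together with the $L^\infty$ bounds on $u$ and $\nabla u$ from \Cref{Reg-cor} and \Cref{improved-reg}, by $Cr \int_{B_r'} |v|(|u| + |\nabla_x' u|)\,dx$, which is reduced via Cauchy--Schwarz and the trace inequality \Cref{TI} to a multiple of $C r^{-b} H (1 + N_0)$. The truly delicate piece is the boundary-of-boundary integral $r \int_{\partial B_r'} v f(x, u)\,d\Hh^{n-1}$, for which a pointwise-in-$r$ bound of the required strength is not available; I would handle it via a coarea/Fubini argument showing that $r \mapsto r \int_{\partial B_r'} v f(x, u)\,d\Hh^{n-1}$ is integrable on $(0, r_0)$ with total mass controlled by $\int_{B_{r_0}'} |x||v f(x, u)|\,dx < \infty$, which produces an absolutely continuous contribution that can be absorbed into the linear correction $Cr$ (at worst at the expense of working with a slightly enlarged monotone quantity $-1/(1+N_0(r)) + Cr + \int_0^r\! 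E(s)\,ds$).

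Combining the estimates yields $N_0'(r) \geq -C(1 + N_0)^2$ for $\mathcal{L}^1$-a.e.\ $r \in (0, r_0)$, equivalently $\tfrac{d}{dr} \bigl[ -1/(1+N_0(r)) + Cr \bigr] \geq 0$, establishing the stated monotonicity. Finally, since $-1/(1+N_0)$ takes values in $[-1, 0)$ and the modified quantity is monotone and bounded on $(0, r_0)$, it admits a limit as $r \to 0^+$; inverting the relation recovers $\lim_{r \to 0^+} N_0(r) \in [0, \infty]$, which is the content of \eqref{frequency_def}.
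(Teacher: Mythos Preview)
Your overall strategy---derive $N_0'(r)\ge -C(1+N_0(r))^2$ a.e.\ and integrate---is exactly the paper's, and your treatment of the Cauchy--Schwarz step, the $P$-terms via \Cref{P-lem}, and the bulk error $\int_{B_r^+}y^b(X\cdot\nabla u)v\,dX$ are all correct and match the paper.

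The genuine gap is in your handling of the thin boundary integral $\int_{B_r'}(x\cdot\nabla_x'v)f(x,u)\,dx$. Integrating by parts on $B_r'$ produces the ``boundary of the boundary'' term $r\int_{\partial B_r'} vf(x,u)\,d\Hh^{n-1}$, and your proposed fix does not close. The coarea argument only shows that $r\mapsto \int_{\partial B_r'}|vf(x,u)|\,d\Hh^{n-1}$ lies in $L^1(0,r_0)$; but in the differential inequality this term enters divided by $H(r)$, so what you actually need in $L^1$ is $\frac{r}{H(r)(1+N_0(r))^2}\int_{\partial B_r'} vf(x,u)\,d\Hh^{n-1}$. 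You have no a~priori control on $H(r)^{-1}$ as $r\to 0^+$ before the monotonicity is established, so the argument is circular. (A secondary issue: your estimate of the third term requires controlling $\int_{B_r'}|v||\nabla_x'u|\,dx$ by $D_0,H$; the trace inequality applied to $\partial_{x_i}u$ would bring in second derivatives of $u$, which are not part of $D_0$.)

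The paper avoids the $\partial B_r'$ term altogether by a different maneuver: it applies the divergence theorem \emph{in the full half-ball} $B_r^+$ to the vector field
\[
G(X)\coloneqq\bigl(x\,v_y(X)f(x,u(X)),\ -(x\cdot\nabla_x'v(X))f(x,u(X))\bigr),
\]
which yields
\[
\int_{B_r'}(x\cdot\nabla_x'v)f(x,u)\,dx=\frac{1}{r}\int_{S_r^+}\bigl(y(x\cdot\nabla_x'v)-|x|^2v_y\bigr)f(x,u)\,d\Hh^n+\int_{B_r^+}(\nabla\cdot G)\,dX.
\]
Now all the error terms live on $S_r^+$ or in $B_r^+$: the surface piece is bounded by $C(r^2D_0'(r)+H(r))$ (here the condition $b\le0$ is used to insert the weight $y^b\ge1$), and the bulk pieces by $C(rD_0(r)+H(r))$ using only \eqref{H2'}, \eqref{H5'}, and $\int_{B_r^+}y^b|\nabla u||\nabla v|\,dX\le D_0(r)$. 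The price is the new factor $r^2D_0'(r)/H(r)$, which the paper then bounds by $C(N_0+N_0^2)$ on the set where $N_0'(r)\le0$ (using $rD_0'/H\le (n+b-1)D_0/H+2rD_0D/H^2$ from $N_0'\le0$ together with \Cref{P-lem}); on the complementary set $N_0'\ge0$ the desired inequality is trivial. This is the missing idea you need.
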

\begin{proof}
We divide the proof into several steps.
\newline
\textbf{Step 1:} In this step, we obtain a preliminary lower bound on $N_0'(r)$. Observe that by \Cref{H>0}, without loss of generality we can assume that there exists $r_0 > 0$ such that $H(r) > 0$ for all $r < r_0$, since otherwise there is nothing to prove. Then, by \Cref{D_0H-der} we have that $N_0 \in W^{1,1}_{\loc}(0, r_0)$, and for $\mathcal{L}^1$-a.e.\@ $r < r_0$ we have that
\begin{equation}
\label{N_0'=}
N_0'(r) = \frac{D_0(r)}{H(r)} + \frac{r D_0'(r)}{H(r)} - \frac{r D_0(r) H'(r)}{H(r)^2}.
\end{equation}
Moreover, in view of \eqref{H'formula} and recalling that $P$ is the function defined in \eqref{P-def}, from \eqref{N_0'=} we obtain that
\begin{align}
N_0'(r) & = -(n + b - 1) \frac{D_0(r)}{H(r)} + \frac{r D_0'(r)}{H(r)} - \frac{2rD_0(r)D(r)}{H(r)^2}, \notag \\
& = -(n + b - 1) \frac{D_0(r)}{H(r)} + \frac{r D_0'(r)}{H(r)} - \frac{2rD(r)^2}{H(r)^2} + \frac{2r P(r)D(r)}{H(r)^2}. \label{N0'}
\end{align}
Using the fact that by the Cauchy--Schwarz inequality and \Cref{D-ibp}
\[
H(r)\int_{S_r^+} y^b(u_{\nu}^2 + v_{\nu}^2)\,d\Hh^n - D(r)^2 \ge 0,
\] 
substituting the expression \eqref{D0'-Rellich} for $D_0'(r)$  into \eqref{N0'} yields 
\begin{equation}
\label{N0'-lower}
N_0'(r) \ge - \frac{2}{H(r)}\int_{B_r^+}y^b(X \cdot \nabla u)v\,dX - \frac{2}{H(r)}\int_{B_r'}(x \cdot \nabla_x' v)f(x, u)\,dx + \frac{2r P(r)D(r)}{H(r)^2}
\end{equation}
holds for $\mathcal{L}^1$-a.e.\@ $r < r_0$.
\newline
\textbf{Step 2:} In this step we prove estimates for the quantities that appear on the right-hand side of \eqref{N0'-lower}. These will allow us to recast \eqref{N0'-lower} as a differential inequality that can be integrated to yield the desired result. To begin, observe that by Young's inequality and \Cref{PI} we obtain that 
\[
\left| \int_{B_r^+}y^b(X \cdot \nabla u)v\,dX \right| \le \frac{r}{2} \int_{B_r^+} y^b(|\nabla u|^2 + v^2)\,dX \le Cr (D_0(r) + rH(r)). 
\]
In particular, this implies that 
\begin{equation}
\label{rhs1}
- \frac{2}{H(r)}\int_{B_r^+}y^b(X \cdot \nabla u)v\,dX \ge - C(N_0(r) + r^2).
\end{equation}
Next, consider the field 
\[
G(X) \coloneqq (x v_y(X) f(x, u(X)), - (x \cdot \nabla_x' v(X))f(x, u(X)))
\] 
and notice that the divergence theorem applied to $G$ yields the identity
\begin{multline}
\label{rhs2}
\int_{B_r'}(x \cdot \nabla_x' v)f(x, u)\,dx = \frac{1}{r} \int_{S_r^+} (y(x \cdot \nabla_x' v) - |x|^2v_y)f(x, u)\,d\Hh^n \\
+ \int_{B_r^+} [n f(x, u)v_y + (x \cdot \nabla_x' f(x, u))v_y + (x \cdot \nabla_x' u)\partial_{\zeta}f(x, u)v_y - (x \cdot \nabla_x' v)\partial_{\zeta}f(x, u)u_y]\,dX.
\end{multline}
We proceed by estimating the terms on the right-hand side of \eqref{rhs2} separately. In the following estimates we rely on the fact that $1 \le y^b$ for $b \le 0$. To begin, observe that arguing as in \eqref{P-est2} and applying Young's inequality we obtain that
\begin{align}
\left|\frac{1}{r} \int_{S_r^+} (y(x \cdot \nabla_x' v) - |x|^2v_y)f(x, u)\,d\Hh^n \right| & \le Cr \int_{S_r^+} (|\nabla_x' v| + |v_y|)|u|^{p-1}\,d\Hh^n \notag \\
& \le Cr \int_{S_r^+} y^b\left(r|\nabla v|^2 + \frac{u^2}{r}\right)\,d\Hh^n, \label{ref_asked}
\end{align}
where for $p > 2$ the constant $C$ in the last inequality depends also on $\|u\|_{L^{\infty}(B_{r_0}')}^{p - 2}$. Consequently, in view of \eqref{D_0'}, for $\mathcal{L}^1$-a.e.\@ $r < 1$ we have that
\begin{equation}
\left|\frac{1}{r} \int_{S_r^+} (y(x \cdot \nabla_x' v) - |x|^2v_y)f(x, u)\,d\Hh^n \right| \le C(r^2D_0'(r) + H(r)). \label{G-boundary}
\end{equation}
The second integral on the right-hand side of \eqref{rhs2} can be estimated in a similar fashion. Indeed, by \eqref{H5'} we have that for every $R < 1$ there exists a constant $C > 0$, possibly depending on $\|u\|_{L^{\infty}(B_R^+)}$, such that
\[
|\nabla_x' f(x, u(X))| \le C|u(X)| \qquad \text{ and } \qquad |\partial_{\zeta} f(x, u(X))| \le C
\]
holds for $\mathcal{L}^{n + 1}$-a.e.\@ $X \in B_R^+$. Then, for all $r < R < 1$ we have that
\begin{align}
\left|\int_{B_r^+} n f(x, u)v_y \,dX \right| & \le C(rD_0(r) + H(r)), \label{divbulk1} \\ 
\left|\int_{B_r^+} (x \cdot \nabla_x' u) \partial_{\zeta}f(x, u)v_y - (x \cdot \nabla_x' v) \partial_{\zeta}f(x, u) u_y)\,dX\right| & \le C r D_0(r), \label{divbulk2} \\
\left|\int_{B_r^+} (x \cdot \nabla_x' f(x, u))v_y\,dX\right| & \le Cr(D_0(r) + r H(r)). \label{divbulk3}
\end{align}
Combining the estimate in \eqref{G-boundary} with those in \eqref{divbulk1}, \eqref{divbulk2}, and \eqref{divbulk3}, and recalling the identity in \eqref{rhs2} we have that 
\begin{equation}
\label{rhs2-estimate}
- \frac{2}{H(r)}\int_{B_r'}(x \cdot \nabla_x' v)f(x, u)\,dx \ge - C\left(\frac{r^2 D_0'(r)}{H(r)} + 1 + N_0(r)\right)
\end{equation}
holds for $\mathcal{L}^1$-a.e.\@ $r < r_0$. Furthermore, by \Cref{P-lem}, the remaining term on the right-hand side of \eqref{N0'-lower} can be bounded from below as follows: 
\begin{equation}
\label{rhs3-estimate}
\frac{2rP(r)D(r)}{H(r)^2} \ge - \frac{2r|P(r)|D_0(r)}{H(r)^2} \ge - C r^{-b}(N_0(r)^2 + N_0(r)).
\end{equation}
Finally, substituting \eqref{rhs1}, \eqref{rhs2-estimate}, and \eqref{rhs3-estimate} into \eqref{N0'-lower} shows that
\begin{equation}
\label{N0'-lower2}
N_0'(r) \ge -C\left(1 + N_0(r) + N_0(r)^2 + \frac{r^2 D_0'(r)}{H(r)} \right)
\end{equation}
holds for $\mathcal{L}^1$-a.e.\@ $r < r_0$. \newline
\textbf{Step 3:} In view of \eqref{N_0'=} and \eqref{H'formula}, for $\mathcal{L}^1$-a.e.\@ $r < r_0$ such that $N_0'(r) \le 0$, we have that
\[
\frac{r D_0'(r)}{H(r)} \le (n + b - 1)\frac{D_0(r)}{H(r)} + \frac{2r D_0(r) D(r)}{H(r)^2}.
\]
This, together with \Cref{P-lem}, implies that for every such $r$ we have
\begin{align*}
\frac{r D_0'(r)}{H(r)} & \le (n + b - 1)\frac{D_0(r)}{H(r)} + \frac{2r D_0(r)^2}{H(r)^2} + \frac{Cr^{1 - b}D_0(r)(r D_0(r) + H(r))}{H(r)^2} \notag \\
& \le (n + b - 1 + Cr^{1 - b})\frac{D_0(r)}{H(r)} + (2 + Cr^{1 - b}) \frac{rD_0(r)^2}{H(r)^2},
\end{align*}
and therefore we have that 
\begin{equation}
\label{D0'-upper}
\frac{r^2 D_0'(r)}{H(r)} \le C(N_0(r) + N_0(r)^2).
\end{equation}
Substituting \eqref{D0'-upper} into \eqref{N0'-lower2} yields
\begin{equation}
\label{N0'-lower3}
\frac{N_0'(r)}{1 + N_0(r) + N_0(r)^2} \ge -C.
\end{equation}
We remark that while \eqref{N0'-lower3} was obtained under the assumption that $N_0'(r) \le 0$, the estimate remains true also when $N_0'(r) > 0$. Hence we conclude that \eqref{N0'-lower3} holds for almost all $r \in (0, r_0)$. Next, observe that \eqref{N0'-lower3} implies 
\begin{equation}
\label{N0'-lower4}
\frac{N_0'(r)}{(1 + N_0(r))^2} \ge -C.
\end{equation}
Integrating \eqref{N0'-lower4} over $(r_1, r_2) \subset (0, r_0)$ yields 
\[
C(r_1 - r_2) \le \int_{r_1}^{r_2} \frac{N_0'(r)}{(1 + N_0(r))^2}\,dr = \frac{1}{1 + N_0(r_1)} - \frac{1}{1 + N_0(r_2)},
\]
which can be rewritten as 
\begin{equation}
\label{frac-mon}
-\frac{1}{1 + N_0(r_1)} + C r_1 \le - \frac{1}{1 + N_0(r_2)} + C r_2.
\end{equation}
Observe that \eqref{frac-mon} readily implies that $N_0$ admits a nonnegative limit as $r \to 0^+$; therefore, this concludes the proof.
\end{proof}

\begin{rmk}
Notably, nearly the entire proof of \Cref{AM-thm} remains valid for $b > 0$, with the exception of the estimates in \eqref{ref_asked}. In essence, since it does not seem possible to bound the term 
\[
\int_{B_r'} (x \cdot \nabla_x' v) \partial_y^b v\,dx
\] 
in terms of $D_0(r)$ and $H(r)$, we used our boundary conditions to replace $y^b \partial_y v$ with $f(\cdot, u)$ on $B_r'$ and proceeded by bounding the resulting integral in terms of Sobolev norms of $f(\cdot, u)$ in $B_r^+$. However, after this substitution, it is unclear whether (or how) the degenerate weight $y^b$, for $b > 0$, can be reintroduced in the estimates. Achieving this would likely require a different approach.
\end{rmk}

The rest of the analysis is facilitated by the introduction of an opportunely rescaled version of the function $H$, namely
\begin{equation}
\label{h-def}
h(r) \coloneqq \frac{H(r)}{r^{n + b}} = \frac{1}{r^{n + b}} \int_{S_r^+} y^b(u^2 + v^2)\,d\Hh^n.
\end{equation}

The following result is inspired by its counterparts in previous investigations (see, for example, Lemma 2.15 in \cite{FF20}).

\begin{lem}
\label{double-lem}
Under the assumptions of \Cref{AM-thm}, let $h$ be as in \eqref{h-def} and $\mu$ as in \eqref{frequency_def}. Assume that $\mu < \infty$. Then the following hold:
\begin{itemize}
\item[$(i)$] for every $0 < r < R < r_0$, we have that
\[
r^{-2\mu} h(r) \le R^{-2\mu} h(R) e^{C(R - r)},
\]
where $C$ is a constant that depends on $\mu$ and on $\|u\|_{L^{\infty}(B_{r_0}')}^{p - 2}$ (when $p > 2$). In particular, $h(r) = \mathcal{O}(r^{2\mu})$ as $r \to 0^+$.
\item[$(ii)$] For every $\delta > 0$ there exists $r_{\delta} > 0$ such that for every $0 < r < R < r_{\delta}$, we have
\[
R^{-2\mu} h(R) \left(\frac{r}{R}\right)^{\delta} \le r^{-2\mu} h(r).
\]
In particular, there exists a constant $C = C(\delta)$ such that $h(r) \ge C(\delta) r^{2\mu + \delta}$ for all $0 < r < r_{\delta}$.
\end{itemize}
\end{lem}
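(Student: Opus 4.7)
The plan is to derive and integrate a differential relation for the rescaled quantity $\varphi(r) \coloneqq r^{-2\mu} h(r)$. Since $H \in W^{1,1}_{\loc}(0,1)$ by \Cref{D_0H-der} and $H(r) > 0$ on $(0, r_0)$ by \Cref{H>0}, both $h$ and $\log \varphi$ belong to $W^{1,1}_{\loc}(0, r_0)$. Using the identity $H'(r) = \tfrac{n+b}{r} H(r) + 2D(r)$ from \Cref{D_0H-der} together with $D(r) = D_0(r) + P(r)$, a direct computation yields
\begin{equation}
\label{prop-phi-der}
\frac{d}{dr} \log \varphi(r) = \frac{2(N_0(r) - \mu)}{r} + \frac{2P(r)}{H(r)}
\end{equation}
for $\mathcal{L}^1$-a.e.\@ $r \in (0, r_0)$. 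The goal is to bound the right-hand side from above and from below.

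For the bound on the second term, \Cref{P-lem} gives $|P(r)|/H(r) \le Cr^{-b}(N_0(r) + 1)$, and since $b \le 0$ the factor $r^{-b}$ is bounded on $(0, r_0)$. For the first term, the quasi-monotonicity formula in \Cref{AM-thm}, upon letting $r_1 \to 0^+$ in \eqref{frac-mon}, yields $\frac{1}{1+N_0(r)} \le \frac{1}{1+\mu} + Cr$, which rearranges to $N_0(r) \ge \mu - C'r$ for a constant $C' = C'(\mu)$. A complementary manipulation, possibly after shrinking $r_0$ so that $Cr_0 < (1 + N_0(r_0))^{-1}$, extracts a uniform upper bound $N_0 \le M$ on $(0, r_0)$, whence $|P(r)|/H(r)$ is uniformly bounded on that interval.

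For part (i), combining these bounds provides $\frac{d}{dr} \log \varphi(r) \ge -C_1$, with $C_1$ depending on $\mu$ and, when $p > 2$, on $\|u\|_{L^\infty(B_{r_0}')}^{p-2}$ (the latter inherited from \Cref{P-lem}). Integrating this inequality over $(r, R)$ and exponentiating gives $r^{-2\mu} h(r) \le R^{-2\mu} h(R) e^{C_1(R-r)}$; fixing $R$ and letting $r \to 0^+$ yields $h(r) = \mathcal{O}(r^{2\mu})$.

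For part (ii), I rewrite \eqref{prop-phi-der} as $\frac{d}{dr} \log \varphi(r) = \frac{1}{r} \bigl[ 2(N_0(r) - \mu) + 2rP(r)/H(r) \bigr]$ and observe that both bracketed quantities tend to $0$ as $r \to 0^+$: the first by the definition \eqref{frequency_def} of $\mu$, and the second because $|rP(r)/H(r)| \le C r^{1-b}(1 + N_0(r))$ with $1-b > 0$. Hence for any $\delta > 0$ I can select $r_\delta > 0$ such that the bracket is $\le \delta$ on $(0, r_\delta)$, which gives $\frac{d}{dr} \log \varphi(r) \le \delta/r$. Integrating over $(r, R) \subset (0, r_\delta)$ yields $\log(\varphi(R)/\varphi(r)) \le \delta \log(R/r)$, which rearranges to the doubling estimate; the corollary $h(r) \ge C(\delta) r^{2\mu + \delta}$ then follows by fixing $R = r_\delta$. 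The one technical subtlety is extracting a uniform upper bound on $N_0$ from the quasi-monotonicity formula, which is the reason $r_0$ may need to be shrunk at the outset.
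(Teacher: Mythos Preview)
Your proof is correct and follows essentially the same approach as the paper: compute $\frac{d}{dr}\log(r^{-2\mu}h(r))$ via \eqref{H'formula} and $D = D_0 + P$, bound it from below (respectively above) using the quasi-monotonicity of $N_0$ together with \Cref{P-lem}, and integrate. The only cosmetic difference is that you first extract a uniform upper bound on $N_0$ (possibly shrinking $r_0$) before applying \Cref{P-lem}, whereas the paper regroups terms as $(1-Cr^{1-b})(N_0(r)-\mu) - Cr^{1-b}(1+\mu)$ so that only the lower bound on $N_0(r)-\mu$ is needed and no shrinking of $r_0$ is required.
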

\begin{proof}
Observe that, in view of \eqref{H'formula}, for $\mathcal{L}^1$-a.e.\@ $r < 1$ we have that
\[
h'(r) = \frac{2 D(r)}{r^{n + b}}.
\]
Moreover, since for every such $r < r_0$ we have that $H(r) > 0$ (see \Cref{H>0}), direct computations show that 
\begin{equation}
\label{logh}
\frac{d}{dr} \log (r^{-2\mu}h(r)) = \frac{2}{r}\left(N_0(r) - \mu + \frac{rP(r)}{H(r)}\right).
\end{equation}
We now proceed by estimating the right-hand of \eqref{logh}, thus obtaining differential inequalities that can be integrated. To this end, observe that by \Cref{AM-thm}, for every $r \in (0, r_0)$ we have that 
\begin{equation}
\label{N0-mu0}
N_0(r) - \mu \ge -\frac{Cr(1 + \mu)^2}{1 + Cr(1 + \mu)}.
\end{equation}
Combining \eqref{logh} with \eqref{N0-mu0} and by an application of \Cref{P-lem} we obtain that
\begin{align}
\frac{d}{dr} \log (r^{-2\mu}h(r)) & \ge \frac{2}{r}\left(N_0(r) - \mu - Cr^{1 - b}(N_0(r) + 1)\right) \notag \\
& = \frac{2}{r}\left((1 - Cr^{1 - b})N_0(r) - (1 - Cr^{1 - b})\mu - Cr^{1 - b}(1 + \mu)\right) \notag \\
& \ge \frac{2}{r}\left(- \frac{(1 - Cr^{1 - b})Cr(1 + \mu)^2}{1 + Cr(1 + \mu)} - Cr^{1 - b}(1 + \mu)\right). \label{loghLB}
\end{align}
Hence, we conclude that 
\[
\frac{d}{dr} \log (r^{-2\mu}h(r)) \ge - C
\]
for $\mathcal{L}^1$-a.e.\@ $r < r_0$, where $C$ is a constant that depends on both the frequency $\mu$ and  on $\|u\|_{L^{\infty}(B_{r_0}')}^{p - 2}$ when $p > 2$. Integrating this inequality over $(r, R) \subset (0, r_0)$ yields 
\[
r^{-2\mu} h(r) \le R^{-2\mu} h(R) e^{C(R - r)}.
\]
To prove the second part of the statement, observe that for every $\delta > 0$ there exists $r_{\delta}$ such that $N_0(r) - \mu \le \delta/4$ for all $r<r_\delta$. Consequently, we obtain that
\[
\frac{d}{dr} \log (r^{-2\mu}h(r)) \le \frac{2}{r}\left(\frac{\delta}{4} + Cr^{1 - b}(\delta + \mu + 1)\right).
\]
Hence,  possibly replacing $r_{\delta}$ with a smaller number, we conclude that
\[
\frac{d}{dr} \log (r^{-2\mu}h(r)) \le \frac{\delta}{r}.
\]
As above, integrating this inequality over $(r, R) \subset (0, r_{\delta})$ yields
\[
R^{-2\mu} h(R) \left(\frac{r}{R}\right)^{\delta} \le r^{-2\mu} h(r).
\]
This concludes the proof.
\end{proof}

\begin{cor} 
Under the assumptions of \Cref{double-lem}, there exists a constant $C$ such that for every $r < R$ we have that 
\[
\max\left\{\inf_{S_r^+}|u|, \inf_{S_r^+}|v|\right\} \le Cr^{\mu}.
\]
\end{cor}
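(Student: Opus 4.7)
The plan is to deduce the corollary directly from the upper bound on $h$ established in \Cref{double-lem} $(i)$, combined with the elementary observation that the infimum of $|w|$ on a sphere is controlled by a suitable weighted $L^2$ average of $w$. Concretely, by the definition \eqref{h-def} of $h$, the estimate $h(r) = \mathcal{O}(r^{2\mu})$ from \Cref{double-lem} $(i)$ rewrites as
\[
H(r) = \int_{S_r^+} y^b (u^2 + v^2)\,d\Hh^n \le C r^{n + b + 2\mu}
\]
for all sufficiently small $r$, where the constant $C$ depends only on $n$, $b$, $\mu$, and $\|u\|_{L^{\infty}(B_{r_0}')}$ (when $p > 2$).

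The main step I would then carry out is an application of the trivial pointwise inequality $w^2 \ge (\inf_{S_r^+}|w|)^2$ on $S_r^+$. Integrating against the weight $y^b$ and using the scaling identity $\int_{S_r^+} y^b\,d\Hh^n = c_{n,b} r^{n+b}$, which is immediate from the change of variables $X = rY$, I obtain
\[
\left(\inf_{S_r^+} |w|\right)^2 \int_{S_r^+} y^b\,d\Hh^n \le \int_{S_r^+} y^b w^2\,d\Hh^n
\]
for any continuous $w$ on $\overline{S_r^+}$. Applying this inequality separately to $u$ and to $v$ and summing the two resulting estimates yields
\[
c_{n, b} \, r^{n + b} \left(\left(\inf_{S_r^+} |u|\right)^2 + \left(\inf_{S_r^+} |v|\right)^2\right) \le H(r) \le C r^{n + b + 2\mu}.
\]
Dividing through by $c_{n, b} r^{n + b}$ and taking square roots shows that each of the two infima is bounded above by a constant multiple of $r^{\mu}$, and the same bound therefore holds for their maximum.

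I do not anticipate any real obstacle here: all that is needed is the upper bound on $h$ already proved in \Cref{double-lem} $(i)$, the scaling identity for $\int_{S_r^+} y^b\,d\Hh^n$, and a trivial pointwise comparison. The only point to be slightly careful about is the range of $r$ for which the estimate is effective, which is dictated by the $r_0$ produced in \Cref{H>0} and is the same threshold under which $h(r) \le C r^{2\mu}$ holds.
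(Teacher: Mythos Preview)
Your proof is correct and follows essentially the same approach as the paper: both rely on the pointwise inequality $w^2 \ge (\inf_{S_r^+}|w|)^2$, the scaling identity $\int_{S_r^+} y^b\,d\Hh^n = c_{n,b}\,r^{n+b}$, and the upper bound $h(r) \le C r^{2\mu}$ from \Cref{double-lem}~$(i)$. The only cosmetic difference is that the paper argues by contradiction while you proceed directly; your direct argument is in fact slightly cleaner.
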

\begin{proof}
Arguing by contradiction, assume that there exist a sequence $r_j \to 0^+$ and constants $C_j \to \infty$ such that 
\[
\max \left\{\inf_{S_{r_j}^+} |u|, \inf_{S_{r_j}^+} |v|\right\} \ge C_j r_j^{\mu}.
\]
Then we have 
\[
r_j^{-2\mu} h(r_j) \ge \frac{1}{r_j^{n + b + 2\mu}} \int_{S_{r_j}^+} y^b \left(\max \left\{\inf_{S_{r_j}^+} |u|, \inf_{S_{r_j}^+} |v|\right\}\right)^2 \,d\Hh^n \ge C(n, b) C_j^2.
\]
Letting $j \to \infty$ gives a contradiction with \Cref{double-lem} $(i)$.
\end{proof}

We conclude the section by showing that a version of statement $(i)$ in \Cref{double-lem} continues to hold even when the frequency is infinite. This result plays an important role later in our study of the free boundary (see, in particular, the proof of \Cref{Fsig}).
\begin{lem}
\label{double-infty}
Under the assumptions of \Cref{AM-thm}, let $h$ be as in \eqref{h-def} and $\mu$ as in \eqref{frequency_def}. Assume that $\mu = \infty$. Then, for every $M > 0$ there exists $r_M > 0$ such that for every $0 < r < R < r_M$, we have
\[
r^{-2M}h(r) \le R^{-2M}h(R)e^{C(R - r)}
\]
Here $C$ is a constant that possibly depends on $\|u\|_{L^{\infty}(B_{r_0}')}^{p - 2}$ when $p > 2$.
\end{lem}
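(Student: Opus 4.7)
The plan is to adapt the argument used in the proof of Lemma \ref{double-lem}(i), replacing the role of $\mu$ by an arbitrary fixed $M$ and exploiting the fact that $N_0(r) \to \infty$ as $r \to 0^+$ to absorb the $-M$ term. The starting point is the same logarithmic derivative identity that was used there. Using \eqref{H'formula} and the fact that $H(r) > 0$ for $r < r_0$ (see \Cref{H>0}), direct computation yields
\begin{equation*}
\frac{d}{dr} \log (r^{-2M}h(r)) = \frac{2}{r}\left(N_0(r) - M + \frac{rP(r)}{H(r)}\right)
\end{equation*}
for $\mathcal{L}^1$-a.e.\@ $r \in (0, r_0)$.

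Next, I would use \Cref{P-lem} to bound the $P$-term:
\begin{equation*}
\left|\frac{rP(r)}{H(r)}\right| \le Cr^{1-b}\left(\frac{rD_0(r)}{H(r)} + 1\right) = Cr^{1-b}(N_0(r) + 1).
\end{equation*}
Substituting into the identity above gives
\begin{equation*}
\frac{d}{dr} \log (r^{-2M}h(r)) \ge \frac{2}{r}\bigl((1 - Cr^{1-b})N_0(r) - M - Cr^{1-b}\bigr).
\end{equation*}

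Since $\mu = \infty$, for the fixed $M$ there exists $r_M' > 0$ such that $N_0(r) \ge M$ for all $r < r_M'$. Choose $r_M \in (0, r_M')$ small enough so that $Cr^{1-b} \le 1/2$ for all $r \in (0, r_M)$; this is possible because $b \le 0$ implies $r^{1-b} \to 0$ as $r \to 0^+$. For $r \in (0, r_M)$, the lower bound becomes
\begin{equation*}
(1 - Cr^{1-b})N_0(r) - M - Cr^{1-b} \ge M(1 - Cr^{1-b}) - M - Cr^{1-b} = -Cr^{1-b}(M + 1),
\end{equation*}
and consequently
\begin{equation*}
\frac{d}{dr} \log (r^{-2M}h(r)) \ge -2C(M+1)r^{-b} \ge -C',
\end{equation*}
where $C'$ depends on $M$, on $b$, and (through \Cref{P-lem}) on $\|u\|_{L^{\infty}(B_{r_0}')}^{p-2}$ when $p > 2$. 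Integrating this inequality over $(r, R) \subset (0, r_M)$ and exponentiating yields the desired estimate
\begin{equation*}
r^{-2M}h(r) \le R^{-2M}h(R)e^{C'(R - r)}.
\end{equation*}

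No step here is genuinely hard; the only thing to get right is the order in which $r_M$ is shrunk, namely first to guarantee $N_0 \ge M$ and then to absorb the coefficient $Cr^{1-b}$ into a factor smaller than one. Unlike in \Cref{double-lem}(i), one cannot hope to improve the exponential factor to a sharp polynomial rate, since $M$ is an arbitrary placeholder rather than the actual order of vanishing; this is consistent with the weaker formulation of the statement.
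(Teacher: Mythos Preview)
Your proof is correct and follows essentially the same route as the paper: compute the logarithmic derivative of $r^{-2M}h(r)$, control the $P$-term via \Cref{P-lem}, use $\mu = \infty$ to make $(1 - Cr^{1-b})N_0(r)$ dominate $M$, and integrate. The only cosmetic difference is that the paper selects $r_M$ so that $(1 - Cr^{1-b})N_0(r) \ge M$ directly, which yields a final constant $C$ independent of $M$, whereas your choice gives $C' = 2C(M+1)$; since the statement does not require uniformity in $M$ and the sole application (in \Cref{Fsig}) fixes a single $M$, this makes no difference.
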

\begin{proof}
Let $C$ be as in \Cref{P-lem}. Then, for every $M > 0$ there exists $r_M > 0$ such that 
\begin{equation}
\label{mu>M}
(1 - Cr^{1 - b})N_0(r) \ge M
\end{equation}
for all $0 < r < r_M$. Then in view of \eqref{mu>M}, arguing as in the proof of \Cref{double-lem} and by eventually replacing $r_M$ with a smaller number (see in particular \eqref{logh} and \eqref{loghLB}), we obtain that 
\begin{align*}
\frac{d}{dr} \log (r^{-2M}h(r)) & = \frac{2}{r}\left(N_0(r) - M + \frac{rP(r)}{H(r)}\right) \\
& \ge \frac{2}{r}\left(N_0(r) - M - Cr^{1 - b}(N_0(r) + 1)\right) \\
& \ge \frac{2}{r}\left((1 - Cr^{1 - b})N_0(r) - M\right) - C \\
& \ge -C
\end{align*}
holds for $\mathcal{L}^1$-a.e.\@ $r \in (0, r_M)$.
The rest follows by integrating the resulting differential inequality over the interval $(r, R) \subset (0, r_M)$. 
\end{proof}

\section{Almgren's blow-up solutions}
\label{ABU-sec}
We recall that, in view of the local character of the analysis we employed, the conclusions of the previous section continue to hold for (sufficiently small) balls centered at $(x_0, 0)$, for any $x_0 \in B'_1$. Throughout this section, we continue to assume $x_0 = 0$ and $b\leq 0$.

Let $h$ be given as in \eqref{h-def} and, for $X \in B_1^+$, define the Almgren's rescalings
\begin{equation}
\label{A-rescaling}
u_r(X) \coloneqq \frac{u(r X)}{h(r)^{1/2}} \qquad \text{ and } \qquad v_r(X) \coloneqq \frac{v(r X)}{h(r)^{1/2}}.
\end{equation}
In the following we always assume that $r$ is sufficiently small, so that if $u$ and $v$ are not identically zero, then $h(r) > 0$ (see \Cref{H>0}). 

The main result of this section is the study of blow-up profiles for $\{u_r\}_r$ and $\{v_r\}_r$.

\begin{thm}
\label{BUL}
Let $u$ be a weak solution to \eqref{bil-noBC} in the sense of \Cref{ws-def}. Let $b \le 0$, and let $N_0$ be given as in \eqref{N0-def} and $\mu$ as in \eqref{frequency_def}. Suppose $f$ satisfies \eqref{H2'}, \eqref{H4'}, and \eqref{H5'}. Assume that $u$ is not identically zero, and let the rescaled families $\{u_r\}_r$ and $\{v_r\}_r \subset H^1(B_1^+; y^b)$ be defined as in \eqref{A-rescaling}. Finally, assume that $\mu < \infty$ and fix $R > 0$. Then for every sequence $r_j \to 0^+$ there exist a subsequence (which we do not relabel) and functions $\tilde{u}, \tilde{v} \in H^1(B_R^+; y^b)$ such that 
\begin{align} 
u_{r_j} \to \tilde{u}, \quad v_{r_j} & \to \tilde{v} \qquad \text{ in } H^1(B_R^+; y^b), \label{bulH} \\
u_{r_j} \to \tilde{u}, \quad v_{r_j} & \to \tilde{v} \qquad \text{ in } C^{1, \alpha}(B_R^+), \label{bulC}
\end{align}
as $j \to \infty$, for some $\alpha \in (0, 1)$. Moreover, $\mu \in \NN \cup \{0\}$, and $\tilde{u}$ and $\tilde{v}$ are homogeneous functions of degree $\mu$ that cannot simultaneously be identically equal to zero and satisfy 
\[
\left\{
\arraycolsep=1.4pt\def\arraystretch{1.6}
\begin{array}{rll}
\mathcal{L}_b(\tilde{u}) = & 0 & \text{ in } B_R^+, \\
\mathcal{L}_b(\tilde{v}) = & 0 & \text{ in } B_R^+, \\
\partial_y^b \tilde{u} = & 0 & \text{ on } B_R', \\
\partial_y^b \tilde{v} = & 0  & \text{ on } B_R'.
\end{array}
\right.
\]
\end{thm}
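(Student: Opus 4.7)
The plan is to use the scale invariance $N_0(\rho, u_r, v_r) = N_0(r\rho, u, v)$ together with the doubling-type estimates of \Cref{double-lem} to secure uniform $H^1$ bounds on the rescaled family; upgrade the resulting weak compactness to the strong convergences \eqref{bulH}--\eqref{bulC} via the Schauder theory of \Cref{reg-subsec}; and close the argument by showing that the limit frequency is constant and equal to $\mu$, which forces the blow-up profiles to be $\mu$-homogeneous.

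A direct change of variables gives $\mathcal{L}_b u_r = r^2 y^b v_r$ and $\mathcal{L}_b v_r = 0$ in $B_{1/r}^+$, together with $\partial_y^b u_r = 0$ and
\[
\partial_y^b v_r(x, 0) = \frac{r^{1-b}}{\sqrt{h(r)}}\, f\!\left(rx, \sqrt{h(r)}\, u_r(x, 0)\right)
\]
on $B'_{1/r}$. Using \eqref{H2'}, the upper bound $h(r) \le Cr^{2\mu}$ from \Cref{double-lem} $(i)$, and the assumptions $b \le 0$ and $p \ge 2$, the right-hand side is controlled by $Cr^{1 - b + \mu(p-2)} |u_r(x, 0)|^{p-1}$ and therefore vanishes in $L^q(B_R')$ as $r \to 0^+$ provided the traces of $u_r$ remain bounded. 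The normalization $H(1, u_r, v_r) = 1$ combined with the identity $\int_{B_1^+} y^b(|\nabla u_r|^2 + |\nabla v_r|^2)\,dX = N_0(r) \to \mu$ and \Cref{double-lem} $(ii)$ propagated to all scales $\rho \le R$ yields a uniform $H^1(B_R^+; y^b)$ bound on $(u_r, v_r)$ for every $R > 0$; extracting a subsequence produces weak $H^1$ limits $\tilde u, \tilde v$.

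To upgrade the convergence to $C^{1, \alpha}$, I would apply \Cref{Lq-reg} to the rescaled system to produce uniform $L^q$ bounds on $u_r, v_r$ and their traces, then feed these into \Cref{STV1.5} (for $v_r$, whose Neumann source vanishes in $L^q$) and \Cref{STV} applied to the even reflection of $u_r$ across $\{y = 0\}$ (which has homogeneous Neumann data and a right-hand side $r^2 y^b v_r$ that tends to zero in $L^q$). Arzel\`a--Ascoli then delivers \eqref{bulC} along a further subsequence, and \eqref{bulH} follows from dominated convergence against the uniform $H^1$ bound. Passing to the limit in the weak formulation of the rescaled PDEs confirms that $(\tilde u, \tilde v)$ solves the decoupled homogeneous system stated in the theorem.

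Finally, homogeneity is obtained by combining the strong convergence with scale invariance: for every $\rho > 0$,
\[
N_0(\rho, \tilde u, \tilde v) = \lim_{r \to 0^+} N_0(\rho, u_r, v_r) = \lim_{r \to 0^+} N_0(r\rho, u, v) = \mu.
\]
Since the limit system carries no perturbation, the derivative computation underlying Almgren's formula applied to $(\tilde u, \tilde v)$ reduces to a nonnegative Cauchy--Schwarz deficit on $S_\rho^+$; constancy of $N_0$ forces this deficit to vanish for a.e.\ $\rho$, yielding $\nabla \tilde u \cdot X = \mu \tilde u$ and $\nabla \tilde v \cdot X = \mu \tilde v$, i.e., $\mu$-homogeneity. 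Even reflection across $\{y = 0\}$ then extends $\tilde u, \tilde v$ to $\mu$-homogeneous $\mathcal{L}_b$-harmonic functions on $\RR^{n + 1}$; smoothness at the origin forces $\mu \in \NN \cup \{0\}$ and realizes $\tilde u, \tilde v$ as polynomials in $\mathcal{P}_\mu$. Non-triviality is immediate from $\int_{S_1^+} y^b(\tilde u^2 + \tilde v^2)\,d\Hh^n = \lim_{r \to 0^+} H(1, u_r, v_r) = 1$. The main technical hurdle I expect is the equicontinuity step: the nonlinear trace $g_r$ is only controlled in $L^q$ a priori, so the uniform $C^{1, \alpha}$ estimates require an $L^\infty$ bound on $\{u_r\}_r$, which I would obtain by a Moser-type iteration mirroring the proof of \Cref{Lq-reg}; a secondary subtlety is verifying that the Cauchy--Schwarz deficit genuinely produces pointwise rather than merely averaged homogeneity, which follows from the smoothness of the limit pair.
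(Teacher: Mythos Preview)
Your proposal is correct and follows essentially the same three-stage argument as the paper: uniform $H^1$ bounds via scale invariance and \Cref{double-lem}, an upgrade to $C^{1,\alpha}$ through \Cref{Lq-reg} combined with \Cref{STV1.5}/\Cref{STV}, and $\mu$-homogeneity from the Cauchy--Schwarz equality forced by constancy of the limit frequency. The anticipated ``main technical hurdle'' you flag---obtaining uniform $L^\infty$ control on $\{u_r\}_r$ via a Moser-type iteration---is exactly what the paper does by reapplying the proof of \Cref{Lq-reg} to the rescaled system.
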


\begin{proof} We divide the proof into two steps.
\newline
\textbf{Step 1:} Fix $R_0 > \min\{1, R\}$ and assume that $r > 0$ is sufficiently small so that $rR_0 < \min\{r_0, r_1\}$, where $r_0$ and $r_1$ are given as in \Cref{H>0} and \Cref{double-lem} $(ii)$, respectively. In particular, we can assume that $H(rR_0) > 0$, that $N_0(rR_0, u, v) \le \mu + 1$, and that
\begin{equation}
\label{rR0/r}
\frac{h(rR_0)}{h(r)} \le R_0^{2\mu + 1}.
\end{equation}
Next, by a change of variables and \eqref{A-rescaling}, we obtain that
\begin{align*}
\int_{B_{rR_0}^+} y^b(|\nabla u|^2 + |\nabla v|^2)\,dX & = r^{n - 1 + b} h(r) \int_{B_{R_0}^+} y^b(|\nabla u_r|^2 + |\nabla v_r|^2)\,dX, \\
\int_{S_{rR_0}^+} y^b(u^2 + v^2)\,d\Hh^n & = r^{n + b} h(r) \int_{S_{R_0}^+} y^b(u_r^2 + v_r^2)\,d\Hh^n.
\end{align*}
Consequently, we have that
\begin{equation}
\label{NrR0}
N_0(rR_0, u, v) = \frac{rR_0 D_0(rR_0, u, v)}{H(rR_0, u, v)} = \frac{R_0 D_0(R_0, u_r, v_r)}{H(R_0, u_r, v_r)} = N_0(R_0, u_r, v_r),
\end{equation}
and moreover, by \eqref{rR0/r}, it follows that
\begin{equation}
\label{HR0-bound}
H(R_0, u_r, v_r) = R_0^{n + b}\frac{h(rR_0)}{h(r)} \le R_0^{n + b + 2\mu + 1}.
\end{equation}
Combining \eqref{NrR0} and \eqref{HR0-bound}, we conclude that
\begin{equation}
\label{rR0-grad}
\int_{B_{R_0}^+} y^b(|\nabla u_r|^2 + |\nabla v_r|^2)\,dX = \frac{1}{R_0}N_0(rR_0, u, v)H(R_0, u_r, v_r) \le (\mu + 1)R_0^{n + b + 2\mu}.
\end{equation}
Additionally, by \Cref{PI}, together with \eqref{HR0-bound} and \eqref{rR0-grad}, we obtain
\[
\int_{B_{R_0}^+} y^b(u_r^2 + v_r^2)\,dX \le C(n, b, \mu)R_0^{n + b + 2\mu + 2}.
\]
This shows that the families $\{u_r\}_r$ and $\{v_r\}_r$ are bounded in $H^1(B_{R_0}^+; y^b)$, provided that $r$ is sufficiently small. In particular, for every sequence $\{r_j\}_j$, $j \ge 2$, such that $r_j \to 0^+$, there exist a subsequence (which we do not relabel) and functions $\tilde{u}, \tilde{v} \in H^1(B_{R_0}^+; y^b)$ such that $u_{r_j} \to \tilde{u}$ and $v_{r_j} \to \tilde{v}$ strongly in $L^2(B_{R_0}^+; y^b)$ (see, for example, \cite{MR1455468}), strongly in $L^2(S_1^+; y^b)$ (see \Cref{TO}), and weakly in $H^1(B_{R_0}^+; y^b)$. Throughout the remainder of the proof, we continue to work with this subsequence; however, we choose to omit the subscript $j$ to keep the notation as simple as possible. In particular, keeping in mind the definition of $h(r)$ \eqref{h-def}, the convergence of the traces obtained above implies that
\begin{equation}
\label{bul=1}
\int_{S_1^+} y^b(\tilde{u}^2 + \tilde{v}^2)\,d\Hh^n = \lim_{r \to 0^+} \int_{S_1^+} y^b(u_r^2 + v_r^2)\,d\Hh^n = 1,
\end{equation}
thus proving that $\tilde{u}, \tilde{v}$ are not both identically zero.
\newline
\textbf{Step 2:} We now improve the convergence obtained in the previous step. To this end, notice that $u_r$ and $v_r$ are weak solutions to 
\[
\left\{
\arraycolsep=1.4pt\def\arraystretch{1.6}
\begin{array}{rll}
\Delta_b u_r = & r^2v_r & \text{ in } B_{R_0}^+, \\
\Delta_b v_r = & 0 & \text{ in } B_{R_0}^+, \\
\partial_y^b u_r = & 0 & \text{ on } B_{R_0}', \\
\partial_y^b v_r = &  \tilde{f}(\cdot, u_r) & \text{ on } B_{R_0}',
\end{array}
\right.
\]
where 
\[
\tilde{f}(x, u_r(x, 0)) \coloneqq \frac{r^{1 - b}}{h(r)^{1/2}}f(rx, h(r)^{1/2}u_r(x, 0)).
\]
Observe that by \eqref{H2'}, \Cref{double-lem} $(i)$, and using that by assumption $p \ge 2$, there exists a constant $C$ (independent of $r$) such that
\begin{equation}
\label{tildeH2}
|\tilde{f}(x, u_r(x, 0))| \le C r^{1 - b} |u_r(x, 0)|^{p - 1}.
\end{equation}
Furthermore, by following the circle of ideas in  the proof of \Cref{Lq-reg}, for every $q \in [1, \infty)$ and every $R_1 \in (R, R_0)$, there exists a constant $C = C(n, b, q, R_0, R_1)$ such that 
\begin{align*}
\|u_r\|_{L^q(B_{R_1}^+; y^b)} + \|u_r\|_{L^q(B_{R_1}')} & \le C \left(\|u_r\|_{H^1(B_{R_0}^+; y^b)} + \|v_r\|_{H^1(B_{R_0}^+; y^b)}\right), \\
\|v_r\|_{L^q(B_{R_1}^+; y^b)} + \|v_r\|_{L^q(B_{R_1}')} & \le C \left(\|u_r\|_{H^1(B_{R_0}^+; y^b)} + \|v_r \|_{H^1(B_{R_0}^+; y^b)}\right). 
\end{align*}
In view of \eqref{tildeH2}, combining these estimates with the results of the previous step we find that for every $q \in [1, \infty)$ there exists a constant $C = C(n, b, q, R_0, R_1, \mu)$ such that 
\begin{equation}
\label{stv-ur}
\|v_r\|_{L^q(B_{R_1}^+)} + \| \tilde{f}(\cdot, u_r(\cdot, 0)) \|_{L^q(B_{R_1}')} \le C.
\end{equation}
Now, if $b < 0$ and $q$ is chosen to be sufficiently large, \eqref{stv-ur} allows us to apply \Cref{STV1.5} and \Cref{STV} $(iii)$ to conclude that $\{u_r\}_r$ and $\{v_r\}_{r}$ are bounded in $C^{1, \alpha}(B_R)$ for some $\alpha \in (0, 1)$. Similarly, if $b = 0$, fix $R_2 \in (R, R_1)$. Then \Cref{STV1.5} and \Cref{STV} $(iii)$ can be applied to show that $\{v_r\}_r$ is bounded in $C^{0, \alpha}(B_{R_2}^+)$ and that $\{u_r\}_r$ is bounded in $C^{1, \alpha}(B_{R_2}^+)$, respectively. To proceed, arguing as in the proof of \Cref{improved-reg}, we further deduce that $\{v_r\}_r$ is also bounded in $C^{1, \alpha}(B_R^+)$. Thus, \eqref{bulH} and \eqref{bulC} follow immediately in this case as well.
\newline
\textbf{Step 3:} Finally, observe that by Corollary \ref{v-solves} and \eqref{tildeH2}, for every $\phi \in H^1_{0, S_R^+}$ we have 
\begin{equation}
\label{weakvtilde}
\left| \int_{B_R^+} y^b \nabla v_r \cdot \nabla \phi\,dX \right| \le \int_{B_R'} |\tilde{f}(x, u_r(x, 0))\phi(x, 0)|\,dx \le C r^{1 - b} \int_{B_R'}|u_r(x, 0)|^{p - 1}|\phi(x, 0)|\,dx.
\end{equation}
Therefore, letting $r \to 0^+$ in \eqref{weakvtilde}, we obtain that 
\[
\int_{B_R^+} y^b \nabla \tilde{v} \cdot \nabla \phi \,dX = 0.
\]
Similarly, passing to the limit in the weak formulation for equation satisfied by $u_r$, shows that
\[
\int_{B_R^+} y^b \nabla \tilde{u} \cdot \nabla \phi \,dX = 0
\]
must hold for every $\phi \in H^1_{0, S_R^+}$. In particular, $\tilde{u}$ and $\tilde{v}$ are smooth by \Cref{STV} $(i)$ and computations analogous to those in the proof of \Cref{D_0H-der} yield
\begin{equation}
\label{D_0'tilde}
\frac{d}{d\rho}D_0(\rho, \tilde{u}, \tilde{v}) = \frac{n + b - 1}{\rho}D_0(\rho, \tilde{u}, \tilde{v}) + 2 \int_{S_{\rho}^+} y^b(\tilde{u}_{\nu}^2 + \tilde{v}_{\nu}^2)\,d\mathcal{H}^n.
\end{equation} 
Next, observe that arguing as in \eqref{NrR0}, if $\rho \le R$ we have that
\begin{equation}
\label{N0constant}
\mu = \lim_{r \to 0^+} N_0(r\rho, u, v) = \lim_{r \to 0^+} N_0(\rho, u_r, v_r) = N_0(\rho, \tilde{u}, \tilde{v}),
\end{equation}
where in the last equality we have used \eqref{bulH}. Following the computations in Step 1 in the proof of \Cref{AM-thm}), by \eqref{D_0'tilde} and \eqref{N0constant} we obtain that 
\begin{align*}
0 = \frac{d}{d\rho}N_0(\rho, \tilde{u}, \tilde{v}) & = -(n + b - 1)\frac{D_0(\rho, \tilde{u}, \tilde{v})}{H(\rho, \tilde{u}, \tilde{v})} + \frac{\rho D_0'(\rho, \tilde{u}, \tilde{v})}{H(\rho, \tilde{u}, \tilde{v})} - \frac{2\rho D_0(\rho, \tilde{u}, \tilde{v})^2}{H(\rho, \tilde{u}, \tilde{v})^2} \\
& = 2\rho\frac{ \int_{S_{\rho}^+}y^b(\tilde{u}_{\nu}^2 + \tilde{v}_{\nu}^2)\,d\mathcal{H}^n}{\int_{S_{\rho}^+}y^b(\tilde{u}^2 + \tilde{v}^2)\,d\mathcal{H}^n} - 2\rho \left(\frac{\int_{S_{\rho}^+} y^b(\tilde{u}\tilde{u}_{\nu} + \tilde{v}\tilde{v}_{\nu})\,d\mathcal{H}^n}{\int_{S_{\rho}^+}y^b(\tilde{u}^2 + \tilde{v}^2)\,d\mathcal{H}^n}\right)^2.
\end{align*}
In particular, we have that
\begin{equation}
\label{CS=}
\left( \int_{S_{\rho}^+} y^b (\tilde{u}^2 + \tilde{v}^2)\,d\Hh^n\right) \left(\int_{S_{\rho}^+} y^b(\tilde{u}_{\nu}^2 + \tilde{v}_{\nu}^2)\,d\Hh^n \right) - \left(\int_{S_{\rho}^+} y^b(\tilde{u} \tilde{u}_{\nu} + \tilde{v} \tilde{v}_{\nu})\ d\Hh^n\right)^2\, = 0.
\end{equation}
Since \eqref{CS=} corresponds to the equality case in the Cauchy-Schwarz inequality, the desired result follows as in the proof of Lemma 5.1 in \cite{MR4169657}. This concludes the proof.
\end{proof}

\begin{rmk} 
\label{Pmu-def}
It is worth noting that arguing exactly as in the proof of Lemma 5.1 in \cite{MR4169657} yields a finer description of the blow-up profiles found in \Cref{BUL}. In particular, we have that 
\begin{equation}
\label{lincomb}
\tilde{u}(X) = |X|^{\mu} \Psi_1(X/|X|) \qquad  \text{ and } \qquad \tilde{v}(X) = |X|^{\mu}\Psi_2(X/|X|),
\end{equation}
where $\Psi_1$ and $\Psi_2$ are linear combinations of eigenfunctions of 
\[
\left\{
\arraycolsep=1.4pt\def\arraystretch{1.6}
\begin{array}{rll}
-\di(y^b \nabla_{S_1^+}\Psi) = & \lambda y^b \Psi & \text{ in } S_1^+, \\
\partial_y^b \Psi = & 0 & \text{ on } \partial S_1^+
\end{array}
\right.
\]
associated with the same eigenvalue $\lambda = \lambda(n, b, \mu)$. For this reason, in the sequel it will be convenient to denote by $\mathcal{P}_{\mu}$ the space consisting of all $\mathcal{L}_b$-harmonic functions that are symmetric with respect to the hyperplane $\{y = 0\}$ and admit a representation of the form \eqref{lincomb}. As one can readily check, every $w \in \mathcal{P}_{\mu}$ satisfies 
\[
\left\{
\arraycolsep=1.4pt\def\arraystretch{1.6}
\begin{array}{rll}
\mathcal{L}_b(w) = & 0 & \text{ in } B_R^+, \\
\partial_y^b w = & 0 & \text{ on } B_R',
\end{array}
\right.
\]
for some $R > 0$, and hence for all $R > 0$.
\end{rmk}

\section{Monotonicity formulas and homogeneous blow-up solutions}
\label{MW-sec}
In this section we prove a quasi-monotonicity formula (see \Cref{M-mon} and \Cref{AlmostMonneautonicity}) for the Monneau-type energy 
\begin{equation}
\label{Monneau-def}
M_{\mu} (r, u, v, p, q) \coloneqq \frac{1}{r^{n + b + 2\mu}}\int_{S_r^+}y^b\left((u - p)^2 + (v - q)^2\right)\,d\Hh^n,
\end{equation}
which we later use in \Cref{non-deg} to show the non-degeneracy of solutions. In \eqref{Monneau-def}, $p, q$ denote elements of $\mathcal{P}_{\mu}$ (see \Cref{Pmu-def}). Finally, we conclude the section with a proof of the existence and uniqueness of homogeneous blow-up solutions (see \Cref{HBUL} below).

Consider the Weiss-type functional 
\begin{equation}
\label{Weiss-def}
W_{\mu}(r, u, v) \coloneqq \frac{H(r, u, v)}{r^{n + b + 2\mu}}(N_0(r, u, v) - \mu).
\end{equation}
The following lemma shows that $W_{\mu}$ is invariant under translations by elements in $\mathcal{P}_{\mu}$.

\begin{lem}
\label{W-identity}
Let $u$ be a weak solution to \eqref{bil-noBC} in the sense of \Cref{ws-def}. Then, for every $p, q \in \mathcal{P}_{\mu}$, we have that
\[
W_{\mu}(r, u, v) = W_{\mu}(r, u - p, v - q).
\]
\end{lem}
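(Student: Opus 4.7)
The plan is to expand the definition of $W_\mu$ into a difference of a scaled Dirichlet energy and a scaled boundary mass, and then to show that the ``cross terms'' introduced by replacing $u,v$ with $u-p, v-q$ cancel the purely polynomial contributions. Concretely, since
\[
W_\mu(r,w_1,w_2) = \frac{rD_0(r,w_1,w_2) - \mu H(r,w_1,w_2)}{r^{n+b+2\mu}},
\]
and the functionals $D_0$ and $H$ are quadratic in their arguments, the difference $W_\mu(r,u,v) - W_\mu(r,u-p,v-q)$ reduces to the claim
\[
r\int_{B_r^+} y^b \nabla u \cdot \nabla p\,dX = \mu \int_{S_r^+} y^b u p \,d\mathcal{H}^n, \qquad r\int_{B_r^+} y^b |\nabla p|^2\,dX = \mu \int_{S_r^+} y^b p^2 \,d\mathcal{H}^n,
\]
together with the analogous identities for $v$ and $q$. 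Once these four identities are established, expansion of the squares $(u-p)^2$, $(v-q)^2$, $|\nabla(u-p)|^2$, and $|\nabla(v-q)|^2$ shows that the two terms cancel exactly.

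To prove the first cross identity, I would apply the divergence theorem to $y^b u \nabla p$ on $B_r^+$. The boundary $\partial B_r^+$ splits into $S_r^+$ and $B_r'$; on $B_r'$ the outward normal is $-e_y$, so the flat-boundary contribution is $-\int_{B_r'} u\, \partial_y^b p\,dx$, which vanishes because $p\in\mathcal{P}_\mu$ is even in $y$ and hence $\partial_y^b p \equiv 0$ on $B_r'$ (see \Cref{Poly-def}). The bulk term $\int_{B_r^+} y^b u \mathcal{L}_b(p)\,dX$ vanishes because $p$ is $\mathcal{L}_b$-harmonic. Finally, on $S_r^+$, the homogeneity of $p$ yields $\partial_\nu p = X\cdot\nabla p / r = \mu p/r$, which turns the $S_r^+$ contribution into $(\mu/r)\int_{S_r^+} y^b u p\,d\mathcal{H}^n$, giving the claim. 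Taking $u=p$ produces the second identity; repeating the argument with $(v,q)$ in place of $(u,p)$ yields the remaining two.

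The only point requiring care is the justification of the integration by parts, since $u$ has only the regularity obtained in \Cref{main-reg} and the boundary $S_r^+$ meets $B_r'$ in a codimension-two edge. I expect this to be handled exactly as in the proofs of \Cref{Rellich} and \Cref{D-ibp}: one works on $B_r^+\cap\{y>\tau\}$, uses the smoothness of $p$ up to $\{y=0\}$ together with the bounds $p,|\nabla p| \lesssim r^\mu$ to dominate the additional boundary term on $\{y=\tau\}$ by $C\tau^{1+b}$, and then lets $\tau\to 0^+$ using $b>-1$. No other assumption on $u$ or $v$ beyond being a weak solution in the sense of \Cref{ws-def} is needed, because the argument never differentiates $u$ or $v$ beyond first order and never uses the nonlinearity $f$. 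This is the main (and only real) technical point; once it is settled the identity $W_\mu(r,u,v)=W_\mu(r,u-p,v-q)$ follows at once.
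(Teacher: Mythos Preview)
Your proposal is correct and follows essentially the same route as the paper: expand the quadratic functionals $D_0$ and $H$, integrate by parts against the $\mathcal{L}_b$-harmonic polynomial (using $\partial_y^b p = 0$ on $B_r'$ and the Euler relation $\nabla p \cdot X = \mu p$ from homogeneity), and observe the resulting cancellation. The paper organizes the computation by writing $u = w_1 + p$, $v = w_2 + q$ and expanding $D_0(r, w_1+p, w_2+q)$, but the underlying identities are exactly the four you list.
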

\begin{proof}
Let $w_1 \coloneqq u - p$, $w_2 \coloneqq v - q$ and observe that 
\begin{align}
W_{\mu}(r, u, v) & = W_{\mu}(r, w_1 + p, w_2 + q) \notag \\
& = \frac{1}{r^{n + b + 2\mu}}(r D_0(r, w_1 + p, w_2 + q) - \mu H(r, w_1 + p, w_2 + q)). \label{W=}
\end{align}
By the divergence theorem and the fact that by homogeneity $p$ and $q$ satisfy $\nabla p \cdot X = \mu p$ and $\nabla q \cdot X = \mu q$, we obtain
\begin{align}
D_0(r, w_1 + p, w_2 + q) & = D_0(r, w_1, w_2) + 2 \int_{B_r^+} y^b(\nabla w_1 \cdot \nabla p + \nabla w_2 \cdot \nabla q)\,dX + D_0(r, p, q) \notag \\
& = D_0(r, w_1, w_2) + \frac{2 \mu}{r} \int_{S_r^+} y^b(w_1p + w_2 q)\,d\Hh^n + \frac{\mu}{r}H(r, p, q) \notag \\
& = D_0(r, w_1, w_2) + \frac{\mu}{r}(H(r, w_1 + p, w_2 + q) - H(r, w_1, w_2)). \label{D_0expand}
\end{align}
Finally, substituting \eqref{D_0expand} into \eqref{W=}, we find that 
\[
W_{\mu}(r, u, v) = \frac{1}{r^{n + b + 2\mu}}(r D_0(r, w_1, w_2) - \mu H(r, w_1, w_2)) = W_{\mu}(r, w_1, w_2).
\]
This concludes the proof.
\end{proof}
In the remainder of the section, we will assume $b\leq 0$.
\begin{prop}
\label{M-mon}
Let $u$ be a weak solution to \eqref{bil-noBC} in the sense of \Cref{ws-def}. Let $b \le 0$, $N_0$ be given as in \eqref{N0-def}, and  $\mu$ as in \eqref{frequency_def}. Suppose $f$ satisfies \eqref{H2'}, \eqref{H4'}, and \eqref{H5'}. Assume that $u$ is not identically zero, and that $\mu < \infty$. Finally, let $M_{\mu}$ and $W_{\mu}$ be given as in \eqref{Monneau-def} and \eqref{Weiss-def}, respectively, let $r_0$ be as in \Cref{H>0} and let $r_1$ be such that $N_0(r, u, v) \le \mu + 1$ for all $r < r_1$. Then
\[
\frac{d}{dr} M_{\mu}(r, u, v, p, q) \ge \frac{2}{r} W_{\mu}(r, u, v) - Cr^{-b}
\]
for $\mathcal{L}^1$-a.e.\@ $r \in (0, \min\{r_0, r_1\}/2)$, where $C$ is a positive constant that depends only on $n$, $b$, $\mu$, $r_0$, $r_1$, $\|u\|_{L^{\infty}(B_{r_0}^+)}$, $\|p\|_{L^2(S_1^+; y^b)}$, and $\|q\|_{L^2(\partial S_1^+)}$.
\end{prop}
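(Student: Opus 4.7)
The plan is to view $M_\mu$ as a rescaled height functional for the translated pair $(w_1, w_2) \coloneqq (u - p, v - q)$ and carefully track how the shift by elements of $\mathcal{P}_\mu$ affects the PDEs in play. Since $p, q \in \mathcal{P}_\mu$ (see \Cref{Poly-def}) are $\mathcal{L}_b$-harmonic and satisfy $\partial_y^b p = \partial_y^b q = 0$ on $B_1'$, we have $\Delta_b w_1 = v$, $\Delta_b w_2 = 0$, $\partial_y^b w_1 = 0$, and $\partial_y^b w_2 = f(\cdot, u)$ on $B_1'$; moreover, by definition,
\[
M_\mu(r, u, v, p, q) = \frac{H(r, w_1, w_2)}{r^{n+b+2\mu}},
\]
so the monotonicity question reduces to differentiating the right-hand side.

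For the differentiation step, the same argument used to establish the identity \eqref{H'formula} in \Cref{D_0H-der} (which only needs $w_1, w_2 \in H^1(B_r^+; y^b)$) gives
\[
H'(r, w_1, w_2) = \frac{n+b}{r}H(r, w_1, w_2) + 2 \int_{S_r^+} y^b\bigl((w_1)_\nu w_1 + (w_2)_\nu w_2\bigr)\,d\Hh^n
\]
for $\mathcal{L}^1$-a.e.\@ $r$. The divergence theorem applied to $y^b w_i \nabla w_i$ on $B_r^+$, together with the boundary conditions above and $\Delta_b w_2 = 0$, converts the spherical integral into $D_0(r, w_1, w_2) + \tilde{P}(r)$, where
\[
\tilde{P}(r) \coloneqq \int_{B_r^+} y^b (u - p) v\,dX + \int_{B_r'} (v - q) f(x, u)\,dx.
\]
Substituting into $\tfrac{d}{dr} M_\mu$ and simplifying yields
\[
\frac{d}{dr} M_\mu(r, u, v, p, q) = \frac{2}{r^{n+b+2\mu+1}}\bigl[\, r D_0(r, w_1, w_2) - \mu H(r, w_1, w_2) \,\bigr] + \frac{2\tilde{P}(r)}{r^{n+b+2\mu}},
\]
and \Cref{W-identity} identifies the bracketed term as $(2/r)\,W_\mu(r, u, v)$. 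Thus the proposition reduces to the estimate $|\tilde{P}(r)| \le C r^{n+2\mu}$, which upon division by $r^{n+b+2\mu}$ produces the stated error $-Cr^{-b}$.

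The bulk part of $\tilde{P}$ is straightforward: \Cref{double-lem}$(i)$ gives $h(r) \le Cr^{2\mu}$, so the coarea formula yields $\int_{B_r^+} y^b(u^2 + v^2)\,dX \le C r^{n+b+2\mu+1}$, and combined with the pointwise bound $|p| \le C |X|^\mu$ and Cauchy--Schwarz we obtain $|\int_{B_r^+} y^b (u-p) v \,dX| \le C r^{n+b+2\mu+1}$, which is more than enough. The trace integral is the principal obstacle: the plan is to combine \Cref{TI} with the frequency bound $D_0(r) \le (\mu + 1) r^{-1} H(r) \le C r^{n+b+2\mu-1}$ (from $N_0(r) \le \mu + 1$) to deduce $\|u\|_{L^2(B_r')}^2 + \|v\|_{L^2(B_r')}^2 \le C r^{n+2\mu}$, with the analogous bounds for $p, q$ following directly by homogeneity. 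Finally, the structural bound $|f(x, \zeta)| \le C|\zeta|^{p-1}$ from \eqref{H2'} together with the $L^\infty$ bound on $u$ from \Cref{Reg-cor} yield $|f(\cdot, u)| \le C \|u\|_{L^\infty(B_{r_0}^+)}^{p-2}|u|$ (using that the exponent $p \ge 2$ by \eqref{H4'}), and one last application of Cauchy--Schwarz closes the estimate; the explicit dependence of $C$ on $\|u\|_{L^\infty}^{p-2}$ in the statement originates precisely here.
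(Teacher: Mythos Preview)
Your proof is correct and follows essentially the same route as the paper: both recognize $M_\mu$ as $r^{-(n+b+2\mu)}H(r,w_1,w_2)$ for $w_1=u-p$, $w_2=v-q$, differentiate via \eqref{H'formula}, invoke \Cref{W-identity} to identify the Weiss term, and reduce the problem to bounding the perturbation $\tilde P(r)=P(r,w_1,w_2)$. The only cosmetic difference is that for the bulk piece you go directly through the coarea formula and \Cref{double-lem}$(i)$, whereas the paper first passes through $rD_0(r)+H(r)$ via \Cref{PI} and only then applies the frequency bound and \Cref{double-lem}; both paths yield the same $Cr^{n+b+2\mu+1}$ estimate and the same constant dependencies.
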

\begin{proof}
As in the proof of \Cref{W-identity}, we let $w_1 \coloneqq u - p$ and $w_2 \coloneqq v - q$. Observe that
\[
M_{\mu}(r, u, v, p, q) = \frac{H(r, w_1, w_2)}{r^{n + b + 2\mu}}.
\]
Then, by \eqref{H'formula}, for $\mathcal{L}^1$-a.e.\@ $r > 0$ we have that
\begin{equation}
\label{M-derivative}
\frac{r}{2}\frac{d}{dr} M_{\mu}(r, u, v, p, q) = \frac{1}{r^{n + b + 2\mu}}(rD(r, w_1, w_2) - \mu H(r, w_1, w_2)).
\end{equation}
Moreover, using the identity $D = D_0 + P$, where $P$ is the perturbation functional defined in \eqref{P-def}, \Cref{W-identity} and \eqref{M-derivative} imply that 
\begin{equation}
\label{M-der-2}
\frac{r}{2}\frac{d}{dr} M_{\mu}(r, u, v, p, q) = W_{\mu}(r, u, v) + \frac{rP(r, w_1, w_2)}{r^{n + b + 2 \mu}}.
\end{equation}
To conclude, it remains to estimate the second term on the right-hand side of \eqref{M-der-2}. To this end, observe that 
\[
P(r, w_1, w_2) = \int_{B_r^+} y^b w_1 v \,dX + \int_{B_r'} f(x, u) w_2\,dx.
\]
Then, by following the argument in the proof of \Cref{P-lem} and recalling that $p, q \in \mathcal{P}_{\mu}$, we obtain
\begin{align}
\left|\int_{B_r^+}y^b w_1 v\,dX \right| & \le C \int_{B_r^+} y^b(u^2 + p^2 + v^2)\,dX \notag \\
& = Cr^{n + 1 + b + 2\mu} \int_{S_1^+}y^bp^2\,d\Hh^n + Cr(rD_0(r, u, v) + H(r, u, v)), \label{Pw1}
\end{align}
and
\begin{align}
\left|\int_{B_r'} f(x, u)w_2\,dx\right| & \le C \int_{B_r'} (v^2 + q^2 + u^2) \notag \\
& \le Cr^{n + 2\mu}\int_{\partial S_1^+} q^2\,d\Hh^{n - 1} + C r^{-b}(rD_0(r, u, v) + H(r, u, v)). \label{Pw2}
\end{align}
Combining \eqref{Pw1} and \eqref{Pw2} we get
\begin{align*}
\left| \frac{rP(r, w_1, w_2)}{r^{n + b + 2\mu}} \right| & \le \frac{Cr^{n + 2 \mu + 1} + Cr^{1-b}(rD_0(r, u, v) + H(r, u, v))}{r^{n + b + 2\mu}} \\
& = Cr^{1 - b} + Cr^{1-b}\frac{H(r, u, v)}{r^{n + b + 2\mu}}(N_0(r, u, v) + 1).
\end{align*}
In turn, by \Cref{double-lem} $(i)$, it follows that
\[
\left| \frac{rP(r, w_1, w_2)}{r^{n + b + 2\mu}} \right| \le Cr^{1 - b} +  Cr^{1 - b}R^{-2\mu}h(R)e^{C(R - r)} \le Cr^{1 - b}
\]
holds for $\mathcal{L}^1$-a.e.\@ $r < \min\{r_0, r_1\}/2$ and $R \in (\min\{r_0, r_1\}/2, r_0)$. This concludes the proof.
\end{proof}

\begin{cor}
\label{AlmostMonneautonicity} 
Under the assumptions of \Cref{M-mon}, we have that
\[
\frac{d}{dr}M_{\mu}(r, u, v, p, q) \ge -C
\]
holds for $\mathcal{L}^1$-a.e.\@ $r \in (0, \min\{r_0, r_1\}/2)$. Moreover, 
\[
M_{\mu}(0^+, u, v, p, q) \coloneqq \lim_{r \to 0^+} M_{\mu}(r, u, v, p, q)
\] 
exists and is finite.
\end{cor}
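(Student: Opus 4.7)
The plan is to combine the differential inequality from \Cref{M-mon} with two ingredients already at our disposal: the quasi-monotonicity of $N_0$ from \Cref{AM-thm} and the doubling estimate of \Cref{double-lem} $(i)$. The proposition gives
\[
\frac{d}{dr} M_{\mu}(r, u, v, p, q) \ge \frac{2}{r} W_{\mu}(r, u, v) - C r^{-b},
\]
and since $b \le 0$, the term $-C r^{-b}$ is bounded below by $-C r_0^{-b}$ on the interval of interest. Hence the task reduces to obtaining a lower bound of the form $W_{\mu}(r, u, v) \ge -C r$.

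To control $W_{\mu}$, I would first extract from the monotonicity of $r \mapsto -\tfrac{1}{1 + N_0(r)} + Cr$ proved in \Cref{AM-thm} the lower frequency bound. Letting $\rho \to 0^+$ in the inequality
\[
-\frac{1}{1 + N_0(\rho)} + C\rho \le -\frac{1}{1 + N_0(r)} + Cr,
\]
and using $N_0(\rho) \to \mu$, one finds
\[
N_0(r) - \mu \ge -\frac{Cr(1+\mu)(1+N_0(r))}{1}\ge -C r,
\]
after using the bound $N_0(r) \le \mu + 1$ valid on $(0, r_1)$. Next, \Cref{double-lem} $(i)$ gives $H(r, u, v)/r^{n + b + 2\mu} = h(r)/r^{2\mu} \le C$ on the same range. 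Multiplying these two estimates yields
\[
W_{\mu}(r, u, v) = \frac{H(r, u, v)}{r^{n + b + 2\mu}}\bigl(N_0(r) - \mu\bigr) \ge -C r,
\]
so that $\frac{2}{r} W_{\mu}(r, u, v) \ge -C$, and the first claim follows upon absorbing all constants.

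For the existence of $M_{\mu}(0^+, u, v, p, q)$, I would integrate the inequality just obtained to conclude that the function $r \mapsto M_{\mu}(r, u, v, p, q) + C r$ is non-decreasing on $(0, \min\{r_0, r_1\}/2)$. Since $M_{\mu}(\cdot, u, v, p, q) \ge 0$ by definition, this function is also bounded below by $0$. Any non-decreasing, non-negative function on an interval with right endpoint admits a finite, non-negative limit as the left endpoint is approached, and since $C r \to 0$ the same is true of $M_{\mu}(r, u, v, p, q)$ itself. I expect no substantive obstacle here: the only mildly delicate point is verifying that the constant from the quasi-monotonicity of $N_0$ combines correctly with the doubling constant, but both are uniform on $(0, \min\{r_0, r_1\}/2)$ and depend only on the quantities already listed in \Cref{M-mon}.
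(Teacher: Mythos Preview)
Your proposal is correct and follows essentially the same approach as the paper: derive $N_0(r)-\mu\ge -Cr$ from the quasi-monotonicity in \Cref{AM-thm}, combine with the bound $h(r)/r^{2\mu}\le C$ from \Cref{double-lem} $(i)$ to get $W_\mu(r,u,v)\ge -Cr$, and absorb the remaining $-Cr^{-b}$ term using $b\le 0$. Your treatment of the limit (via monotonicity of $M_\mu+Cr$ and nonnegativity of $M_\mu$) is slightly more explicit than the paper's, but the substance is identical.
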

\begin{proof}
Arguing as in \eqref{N0-mu0}, we see that
\[
N_0(r) - \mu \ge - \frac{Cr(1 + \mu)^2}{1 + Cr(1 + \mu)} \ge - Cr,
\]
possibly increasing the value of $C$ as needed.
Consequently, applying \Cref{double-lem} $(i)$, we obtain
\[
W_{\mu}(r, u, v) = \frac{H(r)}{r^{n + b + 2\mu}}(N_0(r) - \mu) \ge -Cr \frac{h(r)}{r^{2\mu}} \ge -Cr \frac{h(R)}{R^{2\mu}}e^{C(R - r)} = -Cr.
\]
Since $b \le 0$, the desired result readily follows from \Cref{M-mon}.
\end{proof}

\begin{prop}
\label{non-deg}
Under the assumptions of \Cref{M-mon}, let $r_0$ be given as in \Cref{H>0}. Then there exists a positive constant $c$ such that for every $r < r_0$ we have that 
\[
h(r) \ge cr^{2 \mu}.
\]
\end{prop}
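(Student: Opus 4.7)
The plan is to argue by contradiction, assuming that there exists a sequence $r_j \to 0^+$ along which $h(r_j)/r_j^{2\mu} \to 0$. First I would apply \Cref{AlmostMonneautonicity} with the trivial choice $(p, q) = (0, 0)$: since $M_\mu(r, u, v, 0, 0) = h(r)/r^{2\mu}$ and the quasi-monotonicity ensures that $\lim_{r \to 0^+} h(r)/r^{2\mu}$ exists, the contradictory hypothesis forces this limit to equal zero. Setting $\sigma(r) := h(r)^{1/2}/r^\mu$, this means $\sigma(r) \to 0$ as $r \to 0^+$.

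Next, \Cref{BUL} allows me to extract a further subsequence (not relabeled) along which the Almgren rescalings $(u_{r_j}, v_{r_j})$ converge in $L^2(S_1^+; y^b)$ to a pair $(\tilde u, \tilde v) \in \mathcal{P}_\mu \times \mathcal{P}_\mu$ with $\int_{S_1^+} y^b(\tilde u^2 + \tilde v^2) \, d\Hh^n = 1$. The identity $u(rY) = r^\mu \sigma(r) u_r(Y)$ together with the $\mu$-homogeneity of $\tilde u$ and $\tilde v$ yields, after a routine change of variables, for every $\alpha \in \RR$,
\[
M_\mu(r, u, v, \alpha \tilde u, \alpha \tilde v) = \sigma(r)^2 - 2 \alpha \sigma(r) A(r) + \alpha^2, \qquad A(r) := \int_{S_1^+} y^b(u_r \tilde u + v_r \tilde v) \, d\Hh^n.
\]
Passing to the limit along $r_j$, using $\sigma(r_j) \to 0$ and $A(r_j) \to 1$, combined with \Cref{AlmostMonneautonicity}, then gives $M_\mu(0^+, u, v, \alpha \tilde u, \alpha \tilde v) = \alpha^2$ for every $\alpha \in \RR$.

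The crucial observation is that as $\alpha$ ranges over the bounded interval $[-1, 1]$, the norms $\|\alpha \tilde u\|_{L^2(S_1^+; y^b)}$ and $\|\alpha \tilde v\|_{L^2(\partial S_1^+)}$ are controlled by the (fixed) norms of $\tilde u$ and $\tilde v$; consequently, the constant in the Monneau quasi-monotonicity (cf. \Cref{M-mon}) can be chosen uniformly in $\alpha$. This yields, for some $\tilde C > 0$ depending only on $(\tilde u, \tilde v)$ and all $R$ sufficiently small,
\[
0 \le \sigma(R)^2 - 2 \alpha \sigma(R) A(R) + \tilde C R, \qquad \alpha \in [-1, 1].
\]
Optimizing in $\alpha \in [-1, 1]$ (taking $\alpha = \operatorname{sgn}(\sigma(R) A(R))$) gives $2 |\sigma(R) A(R)| \le \sigma(R)^2 + \tilde C R$. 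Evaluating at $R = r_j$, using $A(r_j) \to 1$ and exploiting the smallness of $\sigma(r_j)$ to absorb the quadratic term into a fraction of $\sigma(r_j)$, one obtains $\sigma(r_j) \le C r_j$ for $j$ large, and therefore $h(r_j)/r_j^{2\mu} \le C' r_j^2$.

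Finally, the contradiction comes from pitting this upper bound against \Cref{double-lem}(ii) applied with $\delta = 1$: the latter produces a constant $C(1) > 0$ such that $h(r)/r^{2\mu} \ge C(1) r$ for all $r$ sufficiently small. Combining the two bounds at $r = r_j$ forces $C(1) r_j \le C' r_j^2$, i.e., $r_j \ge C(1)/C' > 0$, incompatible with $r_j \to 0^+$. The main technical obstacle lies in a careful inspection of the proof of \Cref{M-mon} to verify that the constant therein depends on $(p, q)$ only through $\|p\|_{L^2(S_1^+; y^b)}$ and $\|q\|_{L^2(\partial S_1^+)}$ in a way that stays uniformly bounded as $\alpha$ varies over $[-1, 1]$; this should follow directly from tracking the estimates for $\int_{B_r^+} y^b w_1 v \, dX$ and $\int_{B_r'} f(x, u) w_2 \, dx$ carried out in that proof.
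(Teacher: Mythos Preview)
Your proposal is correct and follows essentially the same route as the paper's proof: both argue by contradiction, pass to an Almgren blow-up $(\tilde u,\tilde v)$ via \Cref{BUL}, expand the Monneau functional $M_\mu$ with $(p,q)=(\tilde u,\tilde v)$, and combine the quasi-monotonicity of $M_\mu$ with the lower bound of \Cref{double-lem}$(ii)$ to reach a contradiction. The only differences are cosmetic: the paper works directly with $(p,q)=(\tilde u,\tilde v)$ (your parameter $\alpha$ is superfluous since $\sigma(R)\ge 0$ and $A(r_j)\to 1$ force the optimizer to be $\alpha=1$), and in the endgame the paper simply divides the inequality $-Cr_j\le \sigma(r_j)^2-2\sigma(r_j)A(r_j)$ by $\sigma(r_j)$, uses \Cref{double-lem}$(ii)$ to control $r_j/\sigma(r_j)\le C r_j^{1-\delta/2}$, and lets $j\to\infty$ to obtain $0\le -2$, whereas you first extract the sharper bound $\sigma(r_j)\le Cr_j$ and then contradict \Cref{double-lem}$(ii)$ with $\delta=1$.
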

\begin{proof}
Arguing by contradiction, assume that there exist a sequence $\{r_j\}_j \subset (0, r_0)$ and constants $C_j \to 0$ such that 
\begin{equation}
\label{h/0}
\frac{h(r_j)}{r_j^{2\mu}} \le C_j \to 0.
\end{equation}
Observe that, by \Cref{H>0}, we can assume without loss of generality that $r_j \to 0^+$, since otherwise \eqref{h/0} would immediately yield a contradiction. Let $\{u_{r_j}\}_j$, $\{v_{r_j}\}_j$ be defined as in \eqref{A-rescaling}. Eventually extracting a subsequence (which we do not relabel), we can find $\tilde{u}, \tilde{v} \in \mathcal{P}_{\mu}$ as in \Cref{BUL}. Observe that by H\"older's inequality and \eqref{bul=1} we have that 
\begin{equation}
\label{holderM} 
\frac{h(r_j)}{r_j^{2\mu}} - \frac{2h(r_j)^{1/2}}{r_j^{\mu}} + 1 \le M_{\mu}(r_j, u, v, \tilde{u}, \tilde{v}) \le \frac{h(r_j)}{r_j^{2\mu}} + \frac{2h(r_j)^{1/2}}{r_j^{\mu}} + 1.
\end{equation}
Letting $j \to \infty$ in \eqref{holderM}, it follows from \eqref{h/0} that
\begin{equation}
\label{hom-lim}
M_{\mu}(0^+, u, v, \tilde{u}, \tilde{v}) = \lim_{j \to \infty} M_{\mu}(r_j, u, v, \tilde{u}, \tilde{v}) = 1.
\end{equation}
Next, observe that by \Cref{AlmostMonneautonicity} we must have that
\begin{equation}
\label{ftcM}
- Cr \le M_{\mu}(r, u, v, \tilde{u}, \tilde{v}) - M_{\mu}(0^+, u, v, \tilde{u}, \tilde{v}) = \frac{h(r)}{r^{2\mu}} - \frac{2 h(r)^{1/2}}{r^{\mu}}\int_{S_1^+} y^b(u_r \tilde{u} + v_r \tilde{v})\,d\Hh^n,
\end{equation}
where the last equality is a consequence of \eqref{A-rescaling}, \eqref{hom-lim}, and the homogeneity of $\tilde{u}$ and $\tilde{v}$. In particular, setting $r = r_j$ in \eqref{ftcM} and applying \Cref{double-lem} $(ii)$ for some $\delta \in (0, 1)$ yields
\[
-C r_j^{1 - \delta/2} \le \frac{h(r_j)^{1/2}}{r_j^{\mu}} - 2 \int_{S_1^+} y^b(u_{r_j} \tilde{u} + v_{r_j} \tilde{v})\,d\Hh^n.
\]
Once again letting $j \to \infty$ in the previous inequality yields
\[
0 \le - 2 \int_{S_1^+} y^b(\tilde{u}^2 + \tilde{v}^2)\,d\Hh^n = -2.
\]
Since we have reached a contradiction, the proof is complete. 
\end{proof}

\begin{cor}
Under the assumptions of \Cref{M-mon}, there exist a positive constant $c$ and $R > 0$ such that for every $r < R$ we have that 
\[
\max\left\{ \sup_{S_r^+} |u|, \sup_{S_r^+} |v| \right\} \ge cr^{\mu}. 
\]
\end{cor}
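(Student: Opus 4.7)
The plan is to deduce the pointwise lower bound directly from the integral non-degeneracy statement established in Proposition \ref{non-deg}. The key observation is that the weighted surface measure of $S_r^+$ is explicit: a change of variables gives
\[
\int_{S_r^+} y^b\,d\Hh^n = r^{n+b} \int_{S_1^+} y^b\,d\Hh^n = C(n,b)\, r^{n+b},
\]
so the weight $y^b$ can be integrated away without any bad blow-up as $r \to 0^+$ (recall $b \in (-1,1)$ guarantees integrability).

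First, I would invoke Proposition \ref{non-deg} to pick $R \in (0, r_0)$ and $c_1 > 0$ such that $h(r) \ge c_1 r^{2\mu}$ for all $r < R$. Unwinding the definition \eqref{h-def} yields
\[
\int_{S_r^+} y^b (u^2 + v^2)\,d\Hh^n = r^{n+b} h(r) \ge c_1\, r^{n+b+2\mu}.
\]
Second, I would estimate the left-hand side from above by
\[
\int_{S_r^+} y^b (u^2 + v^2)\,d\Hh^n \le \Bigl(\sup_{S_r^+} u^2 + \sup_{S_r^+} v^2\Bigr) \int_{S_r^+} y^b\,d\Hh^n \le 2\, C(n,b)\, r^{n+b} \max\!\left\{\sup_{S_r^+} u^2,\, \sup_{S_r^+} v^2\right\}.
\]
Combining the two inequalities and dividing through by $r^{n+b}$ gives
\[
\max\!\left\{\sup_{S_r^+} u^2,\, \sup_{S_r^+} v^2\right\} \ge \frac{c_1}{2C(n,b)}\, r^{2\mu},
\]
from which the claim follows with $c \coloneqq \sqrt{c_1 / (2C(n,b))}$ after taking square roots and using $\sqrt{\max\{a^2,b^2\}} = \max\{|a|,|b|\}$.

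There is essentially no obstacle: all the work has already been done in Proposition \ref{non-deg}, and the present statement is simply its pointwise reformulation. The only mild care needed is to ensure the constant $C(n,b) = \int_{S_1^+} y^b\,d\Hh^n$ is finite, which follows from $b > -1$.
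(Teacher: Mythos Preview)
Your argument is correct and is essentially the same as the paper's: both reduce the pointwise lower bound to Proposition~\ref{non-deg} via the elementary inequality $\int_{S_r^+} y^b(u^2+v^2)\,d\Hh^n \le 2C(n,b)r^{n+b}\max\{\sup_{S_r^+}u^2,\sup_{S_r^+}v^2\}$. The only cosmetic difference is that the paper phrases this as a contradiction (assuming $r_j^{-\mu}\sup_{S_{r_j}^+}|u|,\,r_j^{-\mu}\sup_{S_{r_j}^+}|v|\to 0$ forces $r_j^{-2\mu}h(r_j)\to 0$), while you derive the inequality directly.
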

\begin{proof}
Arguing by contradiction, assume that there exist a sequence $r_j \to 0^+$ and constants $C_j \to 0$ such that 
\[
\frac{1}{r_j^{\mu}} \sup_{S_r^+} |u| \le C_j \qquad \text{ and } \qquad \frac{1}{r_j^{\mu}} \sup_{S_r^+} |v| \le C_j.
\]
In turn, we must have that
\[
\frac{h(r_j)}{r_j^{2\mu}} = \frac{1}{r_j^{n + b + 2\mu}} \int_{S_{r_j}^+} y^b(u^2 + v^2)\,d\Hh^n \le C(n, b)C_j.
\]
Letting $j \to \infty$ gives a contradiction with \Cref{non-deg}.
\end{proof}

Next, consider the \emph{homogeneous rescaling} of $u$ and $v$ (at the origin), which are defined by 
\begin{equation}
\label{hbu-def}
u_{\mu, r}(X) \coloneqq \frac{u(rX)}{r^{\mu}}, \qquad v_{\mu, r}(X) \coloneqq \frac{v(rX)}{r^{\mu}}.
\end{equation}

\begin{thm}
\label{HBUL}
Let $u$ be a weak solution to \eqref{bil-noBC} in the sense of \Cref{ws-def}. Let $b \le 0$,  $N_0$ be given as in \eqref{N0-def}, and  $\mu$ as in \eqref{frequency_def}. Suppose $f$ satisfies \eqref{H2'}, \eqref{H4'}, and \eqref{H5'}. Assume that $u$ is not identically zero, and let the rescaled families $\{u_{\mu, r}\}_r$ and $\{v_{\mu, r}\}_r \subset H^1(B_1^+; y^b)$ be defined as in \eqref{hbu-def}. Finally, assume that $\mu < \infty$ and fix $R > 0$. Then there exist $\tilde{u}_{\mu}, \tilde{v}_{\mu} \in \mathcal{P}_{\mu}$ such that 
\begin{align} 
u_{\mu, r} \to \tilde{u}_{\mu}, \quad v_{\mu, r} & \to \tilde{v}_{\mu} \qquad \text{ in } H^1(B_R^+; y^b), \label{hbulH} \\
u_{\mu, r} \to \tilde{u}_{\mu}, \quad v_{\mu, r} & \to \tilde{v}_{\mu} \qquad \text{ in } C^{1, \alpha}(B_R^+), \label{hbulC}
\end{align}
for some $\alpha \in (0, 1)$ as $r \to 0^+$. Additionally, $\tilde{u}_{\mu}$ and $\tilde{v}_{\mu}$ cannot simultaneously be identically equal to zero.
\end{thm}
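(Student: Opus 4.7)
The plan is to combine the Almgren-type blow-up analysis of \Cref{BUL} with the non-degeneracy estimate of \Cref{non-deg} to obtain compactness for $\{(u_{\mu,r}, v_{\mu,r})\}_r$, and then to upgrade subsequential convergence to full convergence by invoking the Monneau-type almost-monotonicity established in \Cref{M-mon} and \Cref{AlmostMonneautonicity}. First I would observe that, by \Cref{non-deg} and \Cref{double-lem} $(i)$, the quantity
\[
\ell(r) \coloneqq \frac{h(r)^{1/2}}{r^{\mu}}
\]
is bounded above and below by positive constants for $r$ sufficiently small. Since $u_{\mu,r} = \ell(r) u_r$ and $v_{\mu,r} = \ell(r) v_r$ in terms of the Almgren rescalings of \eqref{A-rescaling}, along any sequence $r_j \to 0^+$ I can extract, via \Cref{BUL} and up to passing to a subsequence, $\tilde u, \tilde v \in \mathcal{P}_{\mu}$ such that $u_{r_j} \to \tilde u$ and $v_{r_j} \to \tilde v$ in $H^1(B_R^+; y^b)$ and in $C^{1,\alpha}(B_R^+)$, as well as a limit $\ell_* \in (0, \infty)$ of $\ell(r_j)$. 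Setting $\tilde u_{\mu} \coloneqq \ell_* \tilde u$ and $\tilde v_{\mu} \coloneqq \ell_* \tilde v$, I obtain subsequential limits of $(u_{\mu,r_j}, v_{\mu,r_j})$ in $\mathcal{P}_{\mu} \times \mathcal{P}_{\mu}$ with convergence in the same topologies.

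Next I would prove that this subsequential limit is independent of the choice of sequence. The key observation is that, by \Cref{AlmostMonneautonicity} applied with $(p,q) = (\tilde u_{\mu}, \tilde v_{\mu})$, the one-sided limit $M_{\mu}(0^+, u, v, \tilde u_{\mu}, \tilde v_{\mu})$ exists and is finite. Using the $\mu$-homogeneity of $\tilde u_{\mu}$ and $\tilde v_{\mu}$ and the change of variables $X = rY$, I can rewrite
\[
M_{\mu}(r, u, v, \tilde u_{\mu}, \tilde v_{\mu}) = \int_{S_1^+} y^b \left( (u_{\mu,r} - \tilde u_{\mu})^2 + (v_{\mu,r} - \tilde v_{\mu})^2 \right) d\Hh^n,
\]
and evaluating along $r_j$ (using trace convergence on $S_1^+$, which follows from the strong $H^1$ convergence via \Cref{TO}) I get $M_{\mu}(0^+, u, v, \tilde u_{\mu}, \tilde v_{\mu}) = 0$. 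If $(\tilde u_{\mu}', \tilde v_{\mu}') \in \mathcal{P}_{\mu} \times \mathcal{P}_{\mu}$ is any other limit along another sequence $r_j' \to 0^+$, then evaluating the same functional along $r_j'$ yields
\[
0 = \int_{S_1^+} y^b \left( (\tilde u_{\mu}' - \tilde u_{\mu})^2 + (\tilde v_{\mu}' - \tilde v_{\mu})^2 \right) d\Hh^n.
\]
Since the differences are $\mu$-homogeneous polynomials, this forces $\tilde u_{\mu} \equiv \tilde u_{\mu}'$ and $\tilde v_{\mu} \equiv \tilde v_{\mu}'$ throughout $\RR^{n+1}_+$.

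The main obstacle is precisely this uniqueness step. The non-degeneracy alone only delivers compactness and subsequential limits, and it is not a priori clear that either the rescaling factor $\ell(r)$ or the blow-up profile stabilizes as $r \to 0^+$. The Monneau-type almost-monotonicity in \Cref{M-mon}, whose proof crucially uses the translation invariance of the Weiss functional under elements of $\mathcal{P}_{\mu}$ recorded in \Cref{W-identity}, is what singles out a preferred blow-up profile. Once uniqueness of subsequential limits is established, a standard subsequence-of-subsequence argument combined with the compactness from the first step yields the full convergence of $(u_{\mu,r}, v_{\mu,r})$ to $(\tilde u_{\mu}, \tilde v_{\mu})$ in $H^1(B_R^+; y^b)$ and in $C^{1,\alpha}(B_R^+)$ for all $R > 0$ and some $\alpha \in (0,1)$, as claimed.
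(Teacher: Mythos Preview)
Your proposal is correct and follows essentially the same route as the paper: bound $h(r)/r^{2\mu}$ from above and below via \Cref{double-lem} $(i)$ and \Cref{non-deg}, pass to subsequential limits through \Cref{BUL}, and then use the existence of $M_{\mu}(0^+,u,v,\tilde u_\mu,\tilde v_\mu)$ from \Cref{AlmostMonneautonicity} to force uniqueness. The only cosmetic difference is in the uniqueness step: the paper computes $M_{\mu}(0^+,u,v,\cdot,\cdot)=0$ for both candidate blow-ups and combines them via the elementary inequality $M_{\mu}(r,\tilde u_\mu,\tilde v_\mu,\tilde U_\mu,\tilde V_\mu)\le 2M_{\mu}(r,u,v,\tilde u_\mu,\tilde v_\mu)+2M_{\mu}(r,u,v,\tilde U_\mu,\tilde V_\mu)$, whereas you evaluate a single Monneau functional along both sequences, which is slightly more direct but equivalent.
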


\begin{proof}
Observe that by \Cref{double-lem} $(i)$ and \Cref{non-deg}, for all $r < r_0$, we have that 
\begin{equation}
\label{h-scale}
cr^{2 \mu} \le h(r) \le Cr^{2\mu},
\end{equation}
for some constants $c, C > 0$. Let $\{r_j\}_j$ be a vanishing sequence. Then, eventually extracting a subsequence (which we do not relabel), we can find $\tilde{u}, \tilde{v} \in \mathcal{P}_{\mu}$ as in \Cref{BUL} and a positive constant $\gamma$ such that $r_j^{-2\mu}h(r_j) \to \gamma$. In turn, if we set 
\[
\tilde{u}_{\mu} \coloneqq \sqrt{\gamma} \tilde{u} \qquad \text{ and } \qquad \tilde{v}_{\mu} \coloneqq \sqrt{\gamma} \tilde{v},
\]
we readily obtain that \eqref{hbulH} and \eqref{hbulC} hold for $r = r_j$. Observe that $\tilde{u}_{\mu}$ and $\tilde{v}_{\mu}$ cannot both be identically zero, since they are multiples of the homogeneous functions $\tilde{u}$ and $\tilde{v}$ obtained in \Cref{BUL}, which themselves are nontrivial.

To conclude the proof, it remains to show that the homogenous blow-up solutions $\tilde{u}_{\mu}$ and $\tilde{v}_{\mu}$ do not depend on our choice of the sequence $\{r_j\}_j$. To this end, let $r_j, R_j \to 0^+$ be given and assume that $u_{\mu, r_j} \to \tilde{u}_{\mu}$, $u_{\mu, R_j} \to \tilde{U}_{\mu}$, and similarly $v_{\mu, r_j} \to \tilde{v}_{\mu}$ and $v_{\mu, R_j} \to \tilde{V}_{\mu}$. Then by \Cref{AlmostMonneautonicity} we have that  
\begin{align}
M_{\mu}(0^+, u, v, \tilde{u}_{\mu}, \tilde{v}_{\mu}) & = \lim_{j \to \infty} M_{\mu}(r_j, u, v, \tilde{u}_{\mu}, \tilde{v}_{\mu}) \notag \\
& = \lim_{j \to \infty} M_{\mu}(1, u_{\mu, r_j}, v_{\mu, r_j}, \tilde{u}_{\mu}, \tilde{v}_{\mu}) = 0, \label{M1=0}
\end{align}
and we similarly obtain 
\begin{equation}
\label{M2=0}
M_{\mu}(0^+, u, v, \tilde{U}_{\mu}, \tilde{V}_{\mu}) = \lim_{j \to \infty} M_{\mu}(1, u_{\mu, R_j}, v_{\mu, R_j}, \tilde{U}_{\mu}, \tilde{V}_{\mu}) = 0.
\end{equation}
Finally, combining \eqref{M1=0} and \eqref{M2=0} shows that 
\begin{align*}
\int_{S_1^+} y^b((\tilde{u}_{\mu} - \tilde{U}_{\mu})^2 + (\tilde{v}_{\mu} - \tilde{V}_{\mu})^2)\,d\Hh^n & = M_{\mu}(r,  \tilde{u}_{\mu}, \tilde{v}_{\mu}, \tilde{U}_{\mu}, \tilde{V}_{\mu}) \\
& \le 2M_{\mu}(r, u, v, \tilde{u}_{\mu}, \tilde{v}_{\mu}) + 2 M_{\mu}(r, u, v, \tilde{U}_{\mu}, \tilde{V}_{\mu}) \to 0
\end{align*}
as $r \to 0^+$. From this, it readily follows that $\tilde{u}_{\mu} = \tilde{U}_{\mu}$ and $\tilde{v}_{\mu} = \tilde{V}_{\mu}$, and therefore the proof is complete.
\end{proof}

As one can readily check, the proof of \Cref{BU} follows by combining the results of \Cref{AM-thm} and \Cref{HBUL}.

\section{The free boundary: regularity and structural properties}
\label{FB-sec}
In this section we undertake the study of the free boundary for non-trivial solutions to \eqref{bil-noBC}. We begin by recalling that in the context of this paper, the free boundary is the interface that separates the two phases $\{u > 0\}$ and $\{u < 0\}$ in the thin space. To be precise, the free boundary is defined as
\[
F(u) \coloneqq \left(\partial\{ u > 0 \} \cup \partial\{ u < 0\}\right) \cap B_1';
\]
its properties are classically analyzed by splitting $F(u)$ into two disjoint subsets, namely
\[
R(u) \coloneqq \{ X \in F(u) : \nabla_x'u(X) \neq 0 \}
\]
and
\[
S(u) \coloneqq F(u) \setminus R(u).
\]
We will refer to these subsets as the \emph{regular} and the \emph{singular} part of $F(u)$, respectively. 

\begin{thm}
Let $u$ be a weak solution to \eqref{bil-noBC} in the sense of \Cref{ws-def}. Then, in a neighborhood of any point in $R(u)$, the free boundary $F(u)$ coincides with the graph of a $C^{3, \alpha}$ function if $b \leq 0$ and a $C^{2, \alpha}$ function if $b > 0$.
\end{thm}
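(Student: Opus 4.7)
The plan is to reduce this to a direct application of the implicit function theorem, leveraging the regularity of $u$ established in \Cref{main-reg}. Fix a point $X_0 = (x_0, 0) \in R(u)$, which by definition satisfies $u(X_0) = 0$ and $\nabla_x' u(X_0) \neq 0$. By \Cref{main-reg}, the horizontal restriction $u(\cdot, 0)$ inherits the interior regularity of $u$: specifically, $u(\cdot,0) \in C^{2, \alpha}(B_r')$ for some $\alpha \in (0, 1-b)$ when $b \ge 0$, and $u(\cdot, 0) \in C^{3, \alpha}(B_r')$ for some $\alpha \in (0, -b)$ when $b < 0$; the improvement at $b = 0$ provided by \Cref{Reg-cor-0} guarantees $C^{3, \beta}$ regularity as well, so for all $b \le 0$ we may treat $u|_{B_1'}$ as $C^{3, \alpha}$.

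Since $\nabla_x' u(X_0) \neq 0$, there exists an index $i \in \{1, \dots, n\}$ such that $\partial_{x_i} u(X_0) \neq 0$. By continuity, $\partial_{x_i} u$ is nonzero in a relative neighborhood $U \subset B_1'$ of $X_0$. The classical implicit function theorem, applied to the map $x \mapsto u(x, 0)$ in $U$, produces a function $\varphi$ of the remaining $(n-1)$ variables, defined on some neighborhood of $x_0' \in \RR^{n-1}$, such that
\[
\{u(\cdot, 0) = 0\} \cap U = \{ x_i = \varphi(x_1, \dots, \hat{x}_i, \dots, x_n)\}.
\]
Moreover, $\varphi$ has the same regularity as $u(\cdot, 0)$, namely $C^{3, \alpha}$ when $b \le 0$ and $C^{2, \alpha}$ when $b > 0$.

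It remains to show that the graph of $\varphi$ agrees with $F(u)$ locally. Since $\partial_{x_i} u(X_0) \neq 0$, up to shrinking $U$ we may assume $\partial_{x_i} u > 0$ (or $<0$) on $U$, so that $u(\cdot, 0)$ is strictly monotone in the $x_i$-direction. This forces $\{u(\cdot, 0) > 0\} \cap U$ and $\{u(\cdot, 0) < 0\} \cap U$ to be precisely the two open half-regions separated by the graph of $\varphi$; in particular, both sets are non-empty on every side and the common topological boundary (within $U$) is exactly the graph. Hence $F(u) \cap U$ coincides with the $C^{3, \alpha}$ (resp.\@ $C^{2, \alpha}$) graph produced above, as claimed.

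There is no genuine obstacle here: the main content of the statement is the interior regularity of $u$ proved in \Cref{main-reg}, together with the non-degeneracy $\nabla_x' u \neq 0$ built into the definition of $R(u)$. The only minor care required is ensuring that the abstract zero level set supplied by the implicit function theorem matches the free boundary in the phase-transition sense, which follows from strict monotonicity in one direction.
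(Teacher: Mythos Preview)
Your proof is correct and follows essentially the same route as the paper: both reduce to the implicit function theorem applied to $u(\cdot,0)$ at a point where $\nabla_x' u \neq 0$, and then read off the regularity of the graph from \Cref{Reg-cor} and \Cref{Reg-cor-0}. You additionally justify why the zero level set coincides locally with $F(u)$ via strict monotonicity in one direction---a detail the paper leaves implicit---but otherwise the arguments are the same.
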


\begin{proof}
Let $(x, 0) \in R(u)$ be given. Then, by definition of $R(u)$ there exists $i \in \{1, \dots, n\}$ such that $\partial_{x_i} u(x, 0) \neq 0$. Without loss of generality, assume that $i = n$. But then, by the Implicit Function Theorem, there exist open sets $U \subset \RR^{n - 1}$ and $V \subset \RR$ with $x \in U \times V$ and a function $\psi \colon U \to V$ such that 
\[
(x_1, \dots, x_{n - 1}, \psi(x_1, \dots, x_{n - 1})) = x,
\]
and with the property that
\[
F(u) \cap (U \times V \times \{0\}) = \{(\hat{x}, \psi(\hat{x}), 0): \hat{x} \in U \}.
\] 
In particular, $F(u)$ is locally described by the graph of $\psi$. Since $\psi$ inherits the regularity of $u$, the conclusion follows from \Cref{Reg-cor} and \Cref{Reg-cor-0}. 
\end{proof}

The remainder of the section focuses on the study of the singular part when $b\leq 0$. Given $X_0 \in S(u)$, we use $N_0^{X_0}(r)$ to denote the Almgren frequency functional centered at $X_0$ (see \eqref{N0-def}); to be precise, 
\begin{equation}
\label{N_0shift-def}
N_0^{X_0}(r) \coloneqq N_0(r, u(X_0 + \cdot), v(X_0 + \cdot)).
\end{equation}
We recall that $N_0^{X_0}(0^+)$ exists by \Cref{AM-thm}; moreover, if the limit is finite, then it must be a non-negative integer by \Cref{BUL}. We then define
\[
S_{\mu}(u) = \{X_0 \in S(u) : N_0^{X_0}(0^+) = \mu\}, \qquad \mu \in \NN \cup \{0, \infty\}.
\]

\begin{lem}
\label{Fsig}
Let $u$ be a weak solution to \eqref{bil-noBC} in the sense of \Cref{ws-def}. Let $b \le 0$, $\mu$ as in \eqref{frequency_def}, and suppose $f$ satisfies \eqref{H2'}, \eqref{H4'}, and \eqref{H5'}. Assume that $u$ is not identically zero and that $\mu < \infty$. Then $S_{\mu}(u)$ is of type $F_{\sigma}$, that is, $S_{\mu}(u)$ is the union of countably many closed sets.
\end{lem}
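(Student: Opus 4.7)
The plan is to exhibit $S_{\mu}(u)$ as a countable union of closed sets. The guiding idea is to encode the condition $N_0^{X_0}(0^+) = \mu$ quantitatively, together with a uniform non-degeneracy bound on the denominator of $N_0^{X_0}(\rho)$ that prevents the frequency from degenerating as $X_0$ varies along a limiting sequence. For brevity, write
\[
H^{X_0}(\rho) \coloneqq H(\rho, u(X_0 + \cdot), v(X_0 + \cdot)), \qquad D_0^{X_0}(\rho) \coloneqq D_0(\rho, u(X_0 + \cdot), v(X_0 + \cdot)),
\]
and, for each integer $j \geq 2$, define
\[
E_j \coloneqq \Bigl\{ X_0 \in S(u) \cap \overline{B_{1 - 1/j}'} : |N_0^{X_0}(\rho) - \mu| \leq \tfrac{1}{j} \text{ and } H^{X_0}(\rho) \geq \tfrac{1}{j}\rho^{n + b + 2\mu + 1} \text{ for all } \rho \in (0, 1/j) \Bigr\}.
\]
I will prove $S_{\mu}(u) = \bigcup_{j \geq 2} E_j$ and that each $E_j$ is closed in $\RR^{n + 1}$.

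For the covering, the inclusion $E_j \subset S_{\mu}(u)$ follows by letting $\rho \to 0^+$ in the inequality $|N_0^{X_0}(\rho) - \mu| \leq 1/j$: the limit exists by \Cref{AM-thm}, lies in $\NN \cup \{0\}$ by \Cref{BUL}, and must therefore equal $\mu$ as soon as $j \geq 2$. Conversely, for $X_0 \in S_{\mu}(u)$, the definition $N_0^{X_0}(0^+) = \mu$ combined with the continuity of $\rho \mapsto N_0^{X_0}(\rho)$ (inherited from the quasi-monotonicity in \Cref{AM-thm}) gives $|N_0^{X_0}(\rho) - \mu| \leq 1/j$ for all sufficiently small $\rho$, while \Cref{double-lem}$(ii)$ applied at $X_0$ with $\delta = 1$ supplies a constant $c_{X_0} > 0$ with $H^{X_0}(\rho) \geq c_{X_0} \rho^{n + b + 2\mu + 1}$ for $\rho$ small. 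Choosing $j$ large enough places $X_0 \in E_j$.

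For closedness, take $\{X_k\} \subset E_j$ with $X_k \to X_0$. By compactness, $X_0 \in \overline{B_{1 - 1/j}'} \subset B_1'$, and since $F(u)$ coincides with the intersection with $B_1'$ of the closed set $\partial\{u > 0\} \cup \partial\{u < 0\} \subset \RR^{n + 1}$, we obtain $X_0 \in F(u)$. The regularity provided by \Cref{Reg-cor} and \Cref{Reg-cor-0} (where we use $b \leq 0$) ensures the continuity of $u, v, \nabla u, \nabla v$ in the interior, so $\nabla_x' u(X_0) = \lim_k \nabla_x' u(X_k) = 0$ and hence $X_0 \in S(u)$; the same regularity makes the maps $X_0 \mapsto H^{X_0}(\rho)$ and $X_0 \mapsto D_0^{X_0}(\rho)$ continuous for every fixed $\rho \in (0, 1/j)$. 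Passing to the limit in the lower bound on $H^{X_k}(\rho)$ gives $H^{X_0}(\rho) \geq \rho^{n + b + 2\mu + 1}/j > 0$, so $N_0^{X_0}(\rho)$ is well-defined and equals $\lim_k N_0^{X_k}(\rho)$; in particular $|N_0^{X_0}(\rho) - \mu| \leq 1/j$ is preserved, and $X_0 \in E_j$. The main obstacle in this argument is precisely the passage $N_0^{X_k}(\rho) \to N_0^{X_0}(\rho)$: without a $k$-uniform lower bound on the denominator $H^{X_k}(\rho)$, the frequencies could diverge in the limit and the inequality $|N_0 - \mu| \leq 1/j$ would fail to transfer to $X_0$. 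This is exactly why the non-degeneracy estimate from \Cref{double-lem}$(ii)$ must be hard-coded into the definition of $E_j$.
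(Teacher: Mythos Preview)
Your strategy---encoding the frequency condition directly via $|N_0^{X_0}(\rho)-\mu|\le 1/j$ together with a non-degeneracy lower bound on $H^{X_0}$---is different from the paper's. The paper instead encodes membership in $S_\mu(u)$ through the two-sided growth bound $j^{-1}r^{2\mu}\le h^{X_0}(r)\le j\,r^{2\mu}$ (which holds by \Cref{non-deg} and \Cref{double-lem}\,$(i)$), and then recovers the frequency at a limit point by a three-case contradiction using \Cref{non-deg}, \Cref{double-lem}\,$(i)$, and \Cref{double-infty}. Your route avoids that contradiction argument entirely in the closedness step, which is a genuine simplification; the paper's route, on the other hand, makes the covering step immediate.

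There is, however, a small but real gap in your covering argument. To place a given $X_0\in S_\mu(u)$ into some $E_j$ you need an index $j$ for which $\sup_{0<\rho<1/j}|N_0^{X_0}(\rho)-\mu|\le 1/j$. The quasi-monotonicity in \Cref{AM-thm} only yields the one-sided rate $N_0^{X_0}(\rho)-\mu\ge -C\rho$; it gives no quantitative upper bound, so $N_0^{X_0}(\rho)$ may approach $\mu$ from above arbitrarily slowly (e.g.\ like $(\log(1/\rho))^{-1/2}$, which is consistent with almost-monotonicity). In that scenario no $j$ satisfies the coupled inequality, and $\bigcup_j E_j$ fails to cover $S_\mu(u)$. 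The fix is immediate: since $N_0^{X_0}(0^+)\in\NN\cup\{0,\infty\}$, replace the tolerance $1/j$ on the frequency by the fixed tolerance $1/2$, keeping $1/j$ only for the range $(0,1/j)$, the non-degeneracy constant, and the localisation $\overline{B'_{1-1/j}}$. With that change both the covering and your closedness argument go through unchanged.
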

\begin{proof}
For $j \ge 2$ and $r_0$ as in \Cref{double-lem} $(i)$, let 
\[
E_j \coloneqq \left\{X_0 \in S_{\mu}(u) \cap \overline{B_{1 - \frac{1}{j}}} : \frac{1}{j}r^{2\mu} \le h^{X_0}(r) \le j r^{2\mu}, 0 < r < \min\{r_0, 1 - |X_0|\} \right\},
\]
where, similarly to \eqref{N_0shift-def}, we define 
\[
h^{X_0}(r) \coloneqq h(r, u(X_0 + \cdot), v(X_0 + \cdot)) = \frac{1}{r^{n + b}} \int_{S_r^+} y^b(u(X_0 + X)^2 + v(X_0 + X)^2)\,d\Hh^n.
\]
It is straightforward to verify that (see in particular \eqref{h-scale})
\[
S_{\mu}(u) = \bigcup_{j = 2}^{\infty} E_j;
\]
thus it remains to show that $E_j$ is closed for every $j$. To this end, let $\overline{X} \in \overline{E_j}$ and let $\{X_m\}_m \subset E_j$ be such that $X_m \to \overline{X}$ as $m \to \infty$. As one can readily check, for every $\e \in (0, 1 - |\overline{X}|)$ and every 
\begin{equation}
\label{rminbound}
r < \min\{r_0, 1 - |\overline{X}| - \e\},
\end{equation} 
we have that $h^{X_m}(r) \to h^{\overline{X}}(r)$. Therefore, letting $m \to \infty$, we obtain that 
\begin{equation}
\label{non-degx0}
\frac{1}{j}r^{2\mu} \le h^{\overline{X}}(r) \le j r^{2\mu} 
\end{equation}
holds for all $r$ as in \eqref{rminbound}. Since $\e$ is arbitrary, the inequality continues to hold for $0 < r < \min\{r_0, 1 - |\overline{X}|\}$. Consequently, to prove that $\overline{X} \in E_j$, it remains to show that $\overline{X} \in S_{\mu}(u)$ (or equivalently, that $N_0^{\overline{X}}(0^+) = \mu$). Arguing by contradiction, assume that $N_0^{\overline{X}}(0^+) = \overline{\mu} < \mu$. Then, by \Cref{non-deg}, we get that for all $r$ sufficiently small
\[
cr^{2\overline{\mu}} \le h^{\overline{X}}(r).
\]
However, this is in contradiction with \eqref{non-degx0}. Similarly, if $\mu < \overline{\mu} < \infty$, then it follows from \Cref{double-lem} $(i)$ (see also \eqref{h-scale}) that
\[
h^{\overline{X}}(r) \le Cr^{2\overline{\mu}},
\]
and we have again reached a contradiction with \eqref{non-degx0}. Finally, if $\overline{\mu} = \infty$, fix $M > \mu$; then by an application of \Cref{double-infty} we find a constant $C$ such that 
\[
h^{\overline{X}}(r) \le Cr^{2M}
\]
holds for every $r$ sufficiently small. Reasoning as above, we reach the desired conclusion.
\end{proof}

For $X_0 \in S_{\mu}(u)$, $\mu < \infty$, the \emph{homogeneous rescalings} of $u$ and $v$ at $X_0$ (defined analogously to \eqref{hbu-def}) are given by 
\[
u_{\mu, r}^{X_0}(X) \coloneqq \frac{u(X_0 + rX)}{r^{\mu}}, \qquad v_{\mu, r}^{X_0}(X) \coloneqq \frac{v(X_0 + rX)}{r^{\mu}}.
\]
Similarly, we use $\tilde{u}_{\mu}^{X_0}$ and $\tilde{v}_{\mu}^{X_0}$ to denote the corresponding blow-up limits (see \Cref{HBUL}). 
Finally, the Monneau functional centered at $X_0$ is defined as 
\[
M_{\mu}^{X_0}(r, u, v, p, q) \coloneqq M_{\mu}(r, u(X_0 + \cdot), v(X_0 + \cdot), p(\cdot), q(\cdot)).
\]
With these definitions at hand, we proceed to prove that the blow-up limits $\tilde{u}_{\mu}^{X_0}$ and $\tilde{v}_{\mu}^{X_0}$ depend continuously on $X_0 \in S_{\mu}(u)$.

\begin{prop}
\label{mu-diff}
Let $u$ be a weak solution to \eqref{bil-noBC} in the sense of \Cref{ws-def}. Let $b \le 0$, $\mu$ as in \eqref{frequency_def}, $n = 2, 3$, and suppose $f$ satisfies \eqref{H2'}, \eqref{H4'}, and \eqref{H5'}. Assume that $u$ is not identically zero. Then, for every $\mu < \infty$, we have that the map $X_0 \mapsto (\tilde{u}_{\mu}^{X_0}, \tilde{v}_{\mu}^{X_0})$ is continuous from $S_{\mu}(u)$ to $\mathcal{P}_{\mu} \times \mathcal{P}_{\mu}$. Moreover, for every compact $K \subset S_{\mu}(u)$
\[
\|u_{\mu, r}^{X_0} - \tilde{u}_{\mu}^{X_0}\|_{L^{\infty}(B_{1/2})} + \|v_{\mu, r}^{X_0} - \tilde{v}_{\mu}^{X_0}\|_{L^{\infty}(B_{1/2})} \to 0
\]
as $r \to 0^+$, uniformly with respect to $X_0 \in S_{\mu}(u)$. In particular, there exists a modulus of continuity $\sigma_K$ such that 
\begin{align*}
|u(X) - \tilde{u}_{\mu}^{X_0}(X - X_0)| & \le \sigma_K(|X - X_0|)|X - X_0|^{\mu}, \\
|v(X) - \tilde{v}_{\mu}^{X_0}(X - X_0)| & \le \sigma_K(|X - X_0|)|X - X_0|^{\mu}, 
\end{align*}
for all $X_0 \in S_{\mu}(u)$ and all $X \in \overline{B_1^+} \setminus \overline{S_1^+}$. Finally, the result continues to hold for $n \ge 4$, provided that $\mu < 4/(n - 3)$.
\end{prop}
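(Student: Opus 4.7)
The plan is to prove continuity of the blow-up map first via a Monneau-type identification, then to upgrade to uniform $L^{\infty}$-convergence of the homogeneous rescalings over compact subsets of $S_{\mu}(u)$, and finally to rescale this uniform convergence into the stated modulus of continuity. The uniformity in $X_0$ comes from \Cref{Fsig}: since $S_{\mu}(u) = \bigcup_j E_j$ with each $E_j$ closed, any compact $K \subset S_{\mu}(u)$ lies in some $E_{j_0}$, yielding two-sided bounds $c\,r^{2\mu} \le h^{X_0}(r) \le C\,r^{2\mu}$ uniformly in $X_0 \in K$. These bounds furnish a uniform $H^1(B_R^+;y^b)$-bound on the rescaled families $\{(u_{\mu,r}^{X_0}, v_{\mu,r}^{X_0})\}$, a uniform bound in the finite-dimensional space $\mathcal{P}_{\mu}$ for the blow-ups, and --- crucially --- make the constant in \Cref{M-mon} and \Cref{AlmostMonneautonicity} independent of $X_0 \in K$ when applied with the reference pair $(\tilde{u}_{\mu}^{X_m}, \tilde{v}_{\mu}^{X_m})$ or $(\tilde{u}_{\mu}^{X_0}, \tilde{v}_{\mu}^{X_0})$.

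For continuity (Step 1), take $X_m \to X_0$ in $S_{\mu}(u)$; by finite-dimensionality of $\mathcal{P}_{\mu}$, extract a subsequence with $\tilde{u}_{\mu}^{X_m} \to p$ and $\tilde{v}_{\mu}^{X_m} \to q$. Applying \Cref{AlmostMonneautonicity} at $X_m$ with the reference pair $(\tilde{u}_{\mu}^{X_m}, \tilde{v}_{\mu}^{X_m})$, and using that $M_{\mu}^{X_m}(0^+, u, v, \tilde{u}_{\mu}^{X_m}, \tilde{v}_{\mu}^{X_m}) = 0$ by \Cref{HBUL}, yields $M_{\mu}^{X_m}(r, u, v, \tilde{u}_{\mu}^{X_m}, \tilde{v}_{\mu}^{X_m}) \le Cr$ for all small $r$. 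Fixing $r$ and letting $m \to \infty$, the continuous dependence of the shifted integrals on the base point gives $M_{\mu}^{X_0}(r, u, v, p, q) \le Cr$; sending $r \to 0^+$ and using the $L^2(S_1^+;y^b)$-convergence of $u_{\mu,r}^{X_0}$ to $\tilde{u}_{\mu}^{X_0}$ from \Cref{HBUL}, we obtain
\[
\int_{S_1^+} y^b\bigl((\tilde{u}_{\mu}^{X_0} - p)^2 + (\tilde{v}_{\mu}^{X_0} - q)^2\bigr)\,d\Hh^n = 0,
\]
forcing $p = \tilde{u}_{\mu}^{X_0}$ and $q = \tilde{v}_{\mu}^{X_0}$. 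A standard subsubsequence argument then closes the continuity.

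For uniform convergence (Step 2), I argue by contradiction: suppose $\e > 0$, $\{X_m\} \subset K$, and $r_m \to 0^+$ satisfy $\|u_{\mu,r_m}^{X_m} - \tilde{u}_{\mu}^{X_m}\|_{L^{\infty}(B_{1/2})} + \|v_{\mu,r_m}^{X_m} - \tilde{v}_{\mu}^{X_m}\|_{L^{\infty}(B_{1/2})} \ge \e$, with $X_m \to X_0 \in K$. The rescaled pair solves a system of the same form as the one in the proof of \Cref{Lq-reg}, with Neumann datum $\tilde{f}^{X_m}(x, \zeta) \coloneqq r_m^{1-b-\mu} f(X_m + r_m x, r_m^{\mu} \zeta)$ obeying $|\tilde{f}^{X_m}(x,\zeta)| \le C\, r_m^{1-b+(p-2)\mu}|\zeta|^{p-1}$. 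Running the Moser--Trudinger iteration of \Cref{Lq-reg} together with the Schauder estimates of \Cref{reg-subsec} with constants independent of $m$ produces a uniform $C^{1,\alpha'}(B_{1/2}^+)$-bound, hence a subsequential $C^0$-limit $(\bar{u}, \bar{v})$. To identify this limit, apply \Cref{AlmostMonneautonicity} at $X_m$ with reference $(\tilde{u}_{\mu}^{X_0}, \tilde{v}_{\mu}^{X_0})$: the $0^+$-value equals $\|\tilde{u}_{\mu}^{X_m} - \tilde{u}_{\mu}^{X_0}\|_{L^2(S_1^+;y^b)}^2 + \|\tilde{v}_{\mu}^{X_m} - \tilde{v}_{\mu}^{X_0}\|_{L^2(S_1^+;y^b)}^2$, which vanishes by Step 1, so $M_{\mu}^{X_m}(r_m, u, v, \tilde{u}_{\mu}^{X_0}, \tilde{v}_{\mu}^{X_0}) \to 0$. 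This forces $\bar{u} = \tilde{u}_{\mu}^{X_0}$ and $\bar{v} = \tilde{v}_{\mu}^{X_0}$, and together with the continuity $\tilde{u}_{\mu}^{X_m} \to \tilde{u}_{\mu}^{X_0}$ from Step 1 contradicts the assumed $\e$-gap.

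For the modulus of continuity (Step 3), given $X \in \overline{B_1^+} \setminus \overline{S_1^+}$ close to $X_0 \in K$, set $r \coloneqq |X - X_0|$ and $\xi \coloneqq (X - X_0)/(2r) \in \overline{B_{1/2}}$; by homogeneity of $\tilde{u}_{\mu}^{X_0}$,
\[
|u(X) - \tilde{u}_{\mu}^{X_0}(X - X_0)| = (2r)^{\mu} \bigl|u_{\mu,2r}^{X_0}(\xi) - \tilde{u}_{\mu}^{X_0}(\xi)\bigr| \le 2^{\mu} r^{\mu} \bigl\|u_{\mu,2r}^{X_0} - \tilde{u}_{\mu}^{X_0}\bigr\|_{L^{\infty}(B_{1/2})},
\]
which is $\le \sigma_K(2r)\, r^{\mu}$ by Step 2; the estimate for $v$ is identical. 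The principal obstacle is Step 2: producing uniform $C^{1,\alpha}$ bounds for the rescaled pairs forces the bootstrap of \Cref{Lq-reg} to be executed with constants independent of the rescaling, and the dimensional restriction $\mu < 4/(n-3)$ for $n \ge 4$ is precisely what ensures that the iterative exponents remain compatible with the polynomial scale $r_m^{\mu}$ appearing in $\tilde{f}^{X_m}$ --- while for $n = 2, 3$ this compatibility is automatic.
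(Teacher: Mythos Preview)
Your outline has the right architecture, but there is a recurring misuse of \Cref{AlmostMonneautonicity} that breaks both Step~1 and Step~2. That corollary says $\frac{d}{dr} M_\mu \ge -C$, which integrates to
\[
M_\mu^{X_m}(r_1,\dots) \;\le\; M_\mu^{X_m}(r_2,\dots) + C(r_2 - r_1), \qquad 0 < r_1 < r_2.
\]
In particular, knowing $M_\mu^{X_m}(0^+, u, v, \tilde{u}_\mu^{X_m}, \tilde{v}_\mu^{X_m}) = 0$ only gives the \emph{lower} bound $M_\mu^{X_m}(r,\dots) \ge -Cr$, not the upper bound $M_\mu^{X_m}(r,\dots) \le Cr$ you claim in Step~1; the same issue reappears in Step~2, where $M_\mu^{X_m}(0^+, u, v, \tilde{u}_\mu^{X_0}, \tilde{v}_\mu^{X_0}) \to 0$ does not by itself force $M_\mu^{X_m}(r_m,\dots) \to 0$. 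The paper's Step~1 avoids this by running the argument in the opposite order: fix $X_0$ and a single \emph{positive} scale $r_\e$ at which $M_\mu^{X_0}(r_\e, u, v, \tilde{u}_\mu^{X_0}, \tilde{v}_\mu^{X_0}) < \e$; use the H\"older continuity of $u$ and $v$ to transfer this smallness to $M_\mu^{\overline X}(r_\e, \dots)$ for all $\overline X$ near $X_0$; and only \emph{then} apply almost-monotonicity to propagate the bound downward to every $r \in (0, r_\e)$. This ``fix a scale, vary the base point, then descend'' order is precisely what makes the inequality point in the usable direction, and it simultaneously delivers the locally uniform smallness of $M_\mu^{\overline X}(r,\dots)$ that your Step~2 needs.

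Two further points. First, the claim that a compact $K \subset S_\mu(u) = \bigcup_j E_j$ must lie in a single $E_{j_0}$ is false for a general increasing union of closed sets; the uniformity you want comes directly from the upper bound in \Cref{double-lem}~$(i)$, whose constant depends on the base point only through continuous data. Second, your explanation of the dimensional restriction is misplaced. The paper's Step~2 is a \emph{direct} estimate, not a compactness argument: having shown the $L^2(B_1^+;y^b)$-smallness of $u_{\mu,r}^{\overline X} - \tilde u_\mu^{\overline X}$, one applies \Cref{STV} to the equation $\Delta_b(u_{\mu,r}^{\overline X} - \tilde u_\mu^{\overline X}) = r^2 v_{\mu,r}^{\overline X}$ and needs $\|r^2 v_{\mu,r}^{\overline X}\|_{L^q(B_1^+;y^b)} \to 0$ for some $q > (n+1)/2$. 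The paper obtains this via the crude interpolation $|v|^q \le \|v\|_{L^\infty}^{q-2}|v|^2$ together with $h^{\overline X}(\rho) \le C\rho^{2\mu}$, which yields a power $r^{2-\mu(q-2)}$; the simultaneous constraints $q > (n+1)/2$ and $2 - \mu(q-2) > 0$ are exactly what force $\mu < 4/(n-3)$ when $n \ge 4$. This has nothing to do with the iterative exponents in \Cref{Lq-reg}.
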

\begin{proof} We divide the proof into two steps.
\newline
\textbf{Step 1:} Let $\e > 0$ be given and fix $X_0 \in S_{\mu}(u)$. Recalling that $M_{\mu}^{X_0}(0^+, u, v, \tilde{u}_{\mu}^{X_0}, \tilde{v}_{\mu}^{X_0}) = 0$ (see, for example, \eqref{M1=0}), we can find $r_{\e} = r_{\e}(X_0)$ such that 
\[
M_{\mu}^{X_0}(r, u, v, \tilde{u}_{\mu}^{X_0}, \tilde{v}_{\mu}^{X_0}) < \e
\]
for all $r \le (0, r_{\e}]$. Using the fact that $u$ and $v$ are H\"older continuous with exponent $\alpha$, there exists $\rho_{\e} = \rho_{\e}(X_0, \mu, \alpha)$ such that for every $\overline{X} \in S_{\mu}(u)$ with $|\overline{X} - X_0| < \rho_{\e}$, we have that
\begin{align*}
M_{\mu}^{\overline{X}}(r_{\e}, u, v, \tilde{u}_{\mu}^{X_0}, \tilde{v}_{\mu}^{X_0})  
< 3\e.
\end{align*}
By \Cref{AlmostMonneautonicity}, assuming without loss that $r_{\e} < \e$, we have that 
\begin{equation}
\label{MX}
M_{\mu}^{\overline{X}}(r, u, v, \tilde{u}_{\mu}^{X_0}, \tilde{v}_{\mu}^{X_0}) \le M_{\mu}^{\overline{X}}(r_{\e}, u, v, \tilde{u}_{\mu}^{X_0}, \tilde{v}_{\mu}^{X_0}) + C(r_{\e} - r) \le C\e
\end{equation}
holds for all $r \in (0, r_{\e})$. In particular, taking the limit as $r \to 0^+$ in the identity
\[
M_{\mu}^{\overline{X}}(r, u, v, \tilde{u}_{\mu}^{X_0}, \tilde{v}_{\mu}^{X_0}) = \int_{S_1^+} y^b\left((u_{\mu, r}^{\overline{X}} - \tilde{u}_{\mu}^{X_0})^2 + (u_{\mu, r}^{\overline{X}} - \tilde{v}_{\mu}^{X_0})^2\right)\,d\Hh^n,
\]
in view of \eqref{MX} we obtain that
\begin{equation}
\label{X0X}
\int_{S_1^+} y^b\left((\tilde{u}_{\mu}^{\overline{X}} - \tilde{u}_{\mu}^{X_0})^2 + (\tilde{v}_{\mu}^{\overline{X}} - \tilde{v}_{\mu}^{X_0})^2\right)\,d\Hh^n = \lim_{r \to 0^+} M_{\mu}^{\overline{X}}(r, u, v, \tilde{u}_{\mu}^{X_0}, \tilde{v}_{\mu}^{X_0}) \le C\e.
\end{equation}
Since $\e$ is arbitrary, this proves the first part of the statement. 
\newline
\textbf{Step 2:} Next, let $K$ be a compact subset of  $S_{\mu}(u)$ and fix $X_0 \in K$. We claim that if $r_{\e}$ and $\rho_{\e}$ are chosen as in the previous step, then there exists a constant $C$ such that for $r \in (0, r_{\e})$ and every $\overline{X}$ with $|\overline{X} - X_0| < \rho_{\e}$ we have that 
\begin{equation}
\label{L2X}
\int_{B_1^+} y^b\left((u_{\mu, r}^{\overline{X}} - \tilde{u}_{\mu}^{\overline{X}})^2 + (v_{\mu, r}^{\overline{X}} - \tilde{v}_{\mu}^{\overline{X}})^2\right)\,dX \le C\e.
\end{equation}
Indeed, by the coarea formula together with a change of variables, we have that
\begin{align}
& \int_{B_1^+} y^b\left((u_{\mu, r}^{\overline{X}} - \tilde{u}_{\mu}^{\overline{X}})^2 + (v_{\mu, r}^{\overline{X}} - \tilde{v}_{\mu}^{\overline{X}})^2\right)\,dX \notag \\
& = \int_0^1 \int_{S_{\rho}^+} y^b\left[\left(\frac{u(\overline{X} + rX)}{r^{\mu}} - \tilde{u}_{\mu}^{\overline{X}}(X)\right)^2  + \left(\frac{v(\overline{X} + rX)}{r^{\mu}} - \tilde{v}_{\mu}^{\overline{X}}(X)\right)^2\right]\,d\Hh^n d\rho \notag \\
& = \int_0^1 \frac{1}{r^{n + b + 2\mu}} \int_{S_{\rho r}^+} y^b\left[(u(\overline{X} + X) - \tilde{u}_{\mu}^{\overline{X}}(X))^2  + (v(\overline{X} + X) - \tilde{v}_{\mu}^{\overline{X}}(X))^2\right]\,d\Hh^n d\rho \notag \\
& \le \int_0^1 2 \rho^{n + b + 2\mu} \left(M_{\mu}^{\overline{X}}(\rho r, u, v, \tilde{u}_{\mu}^{X_0}, \tilde{v}_{\mu}^{X_0}) +\int_{S_1^+} y^b\left((\tilde{u}_{\mu}^{\overline{X}} - \tilde{u}_{\mu}^{X_0})^2 + (\tilde{v}_{\mu}^{\overline{X}} - \tilde{v}_{\mu}^{X_0})^2\right)\,d\Hh^n \right)\,d\rho. \label{L2Xuni}
\end{align}
As one can readily check, \eqref{L2X} follows by substituting \eqref{MX} and \eqref{X0X} into \eqref{L2Xuni}. Next, observe that for any $\overline{X} = (\bar{x}, 0) \in S_{\mu}(u)$ we have that
\begin{equation}
\label{uvX}
\left\{
\arraycolsep=1.4pt\def\arraystretch{1.6}
\begin{array}{rll}
\Delta_b \left(u_{\mu, r}^{\overline{X}} - \tilde{u}_{\mu}^{\overline{X}}\right) = & r^2v_{\mu, r}^{\overline{X}} & \text{ in } B_1^+, \\
\Delta_b \left(v_{\mu, r}^{\overline{X}} - \tilde{v}_{\mu}^{\overline{X}}\right) = & 0 & \text{ in } B_1^+, \\
\partial_y^b \left(u_{\mu, r}^{\overline{X}} - \tilde{u}_{\mu}^{\overline{X}}\right) = & 0 & \text{ on } B_1', \\
\partial_y^b \left(v_{\mu, r}^{\overline{X}} - \tilde{v}_{\mu}^{\overline{X}}\right) = &  \bar{f}(\cdot, u_{\mu, r}^{\overline{X}}) & \text{ on } B_1',
\end{array}
\right.
\end{equation}
where 
\[
\bar{f}(x, u_{\mu, r}^{\overline{X}}(x, 0)) \coloneqq r^{1 - b - \mu} f(\bar{x} + rx, r^{\mu}u_{\mu, r}^{\overline{X}}(x, 0)).
\]
For any $q > (n + 1)/2$, an application of \Cref{STV} yields the existence of a positive constant $C$ and $\alpha \in (0, 1)$ such that
\begin{equation}
\label{u-tay}
\|u_{\mu, r}^{\overline{X}} - \tilde{u}_{\mu}^{\overline{X}}\|_{C^{0, \alpha}(B_{3/4}^+)} \le C\left(\|u_{\mu, r}^{\overline{X}} - \tilde{u}_{\mu}^{\overline{X}}\|_{L^2(B_1^+; y^b)} + \|r^2 v_{\mu, r}^{\overline{X}}\|_{L^{q}(B_1^+; y^b)}\right).
\end{equation}
By inspection in the proof of \Cref{double-lem} $(i)$, we can find $C$ such that
\[
h^{\overline{X}}(r) \le C r^{2\mu} 
\]
for all $\overline{X} \in S_{\mu}(u) \in \overline{B_{R}^+}$, $R < 1$. In turn, we have that
\begin{align}
\|r^2 v_{\mu, r}^{\overline{X}}\|_{L^q(B_1^+; y^b)}^q & = r^2 \int_{B_1^+} y^b |v_{\mu, r}^{\overline{X}}|^q\,dX \notag \\
& = \frac{r^2}{r^{n + 1 + b + q\mu}} \int_0^r \int_{S_{\rho}^+} y^b |v(\overline{X} + X)|^q\,d\Hh^nd\rho \notag \\
& \le \frac{Cr^2}{r^{n + 1 + b + q\mu}} \int_0^r \rho^{n + b} h^{\overline{X}}(\rho)\,d\rho \notag \\
& \le \frac{Cr^2}{r^{n + 1 + b + q\mu}} \int_0^r \rho^{n + b + 2\mu}\,d\rho \le Cr^{2 - \mu(q - 2)}, \label{ptw-fix}
\end{align}
where $C$ is a constant that depends on $\|v\|_{L^{\infty}(B_R^+)}^{q - 2}$ when $q > 2$. Observe that the right-hand side of \eqref{ptw-fix} can be made arbitrarily small, provided that $2 - \mu(q - 2) > 0$. In particular, if $n = 2$ then $q = 2$ has all the required properties and the desired result readily follows, while if $n = 3$, we can choose 
\[
q \coloneqq 2 + \frac{1}{1 + \mu} > 2.
\]
Finally, if $n \ge 4$, since by assumption we have that $\mu < 4/(n - 3)$, then we can find $q \in ((n + 1)/2, 2 + 2/\mu)$. This shows that there exists $C = C(\e)$, independent of $\overline{X}$ and $r$ such that $C(\e) \to 0^+$ as $\e \to 0^+$ and with the property that
\begin{equation}
\label{uXunif}
\|u_{\mu, r}^{\overline{X}} - \tilde{u}_{\mu}^{\overline{X}}\|_{L^{\infty}(B_{3/4}^+)} \le C(\e)
\end{equation}
for all $r \in (0, r_{\e})$. 
Similarly, applying \Cref{STV1.5} for the equation satisfied by $v_{\mu, r}^{\overline{X}} - \tilde{v}_{\mu}^{\overline{X}}$ (see  \eqref{uvX}), we obtain that
\begin{equation}
\label{v-tay}
\|v_{\mu, r}^{\overline{X}} - \tilde{v}_{\mu}^{\overline{X}}\|_{L^{\infty}(B_{1/2}^+)} \le C\left(\|v_{\mu, r}^{\overline{X}} - \tilde{v}_{\mu}^{\overline{X}}\|_{L^2(B_{3/4}^+; y^b)} + \|\bar{f}(\cdot, u_{\mu, r}^{\overline{X}})\|_{L^{\infty}(B_{3/4}')}\right).
\end{equation}
Notice also that by \eqref{H2'} there exists a constant $C$ (independent of $r$ and $\overline{X}$) such that
\begin{equation}
\label{barH2}
|\bar{f}(x, u_{\mu, r}^{\overline{X}}(x, 0))| \le C r^{1 - b + \mu(p - 2)} |u_{\mu, r}^{\overline{X}}(x, 0)|^{p - 1}.
\end{equation}
In particular, we have that
\begin{multline}
\label{fboundX}
\|\bar{f}(\cdot, u_{\mu, r}^{\overline{X}})\|_{L^{\infty}(B_{3/4}')} \le C r^{1 - b + \mu(p - 2)} \Big(\|u_{\mu, r}^{\overline{X}} - \tilde{u}_{\mu}^{\overline{X}}\|_{L^{\infty}(B_{3/4}')} \\ + \|\tilde{u}_{\mu}^{\overline{X}} - \tilde{u}_{\mu}^{X_0}\|_{L^{\infty}(B_{3/4}')} + \|\tilde{u}_{\mu}^{X_0}\|_{L^{\infty}(B_{3/4}')}\Big)^{p - 1}.
\end{multline}
Observe that the first term on the right-hand side of \eqref{fboundX} is bounded by \eqref{uXunif}, and that the second term in controlled by \eqref{X0X} in view of the fact that all norms are equivalent on the finite dimensional space $\mathcal{P}_{\mu}$; finally, the last term is bounded by a constant that depends only on $X_0$. 

Since $K$ is compact, the desired result follows by a covering argument. This completes the proof.
\end{proof}

Next, we define the dimension of the singular set $S_{\mu}(u)$ at a point $X_0$ as 
\begin{equation}
\label{dim-def}
d_{\mu}^{X_0} \coloneqq \min \left\{d_{u, \mu}^{X_0}, d_{v, \mu}^{X_0}\right\},
\end{equation}
where 
\[
d_{u, \mu}^{X_0} \coloneqq \operatorname{dim}\{ \xi \in \RR^n : \xi \cdot \nabla_x' \tilde{u}_{\mu}^{X_0}(x, 0) = 0 \text{ for all } x \in \RR^n\}
\]
and 
\[
d_{v, \mu}^{X_0} \coloneqq \operatorname{dim}\{ \xi \in \RR^n : \xi \cdot \nabla_x' \tilde{v}_{\mu}^{X_0}(x, 0) = 0 \text{ for all } x \in \RR^n\}.
\]

\begin{thm}
\label{GPthm}
Under the assumptions of \Cref{mu-diff}, let 
\begin{equation}
\label{Sdmu}
S_{\mu}^d(u) \coloneqq \{X_0 \in S_{\mu}(u) : d_{\mu}^{X_0} = d \}.
\end{equation}
Then for every $\mu \in \NN$ we have that $d_{\mu}^{X_0} \le n - 1$. Moreover, for every $d \in \{0, 1, \dots, n - 1\}$, the set $S_{\mu}^d(u)$ is contained in a countable union of $d$-dimensional $C^1$ manifolds.
\end{thm}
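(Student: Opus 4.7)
The proof follows the Whitney-extension stratification scheme in the spirit of Garofalo-Petrosyan, with the key input being the continuity of blow-ups established in Proposition~\ref{mu-diff}. I first dispose of the bound $d_\mu^{X_0} \leq n - 1$. If $d_\mu^{X_0} = n$, then $\nabla_x' \tilde{u}_\mu^{X_0}(x, 0) \equiv 0$ and $\nabla_x' \tilde{v}_\mu^{X_0}(x, 0) \equiv 0$, so both traces are constant, and by $\mu$-homogeneity with $\mu \geq 1$ they vanish identically on $\{y = 0\}$. For $P \in \mathcal{P}_\mu$, writing (by symmetry in $y$) $P(x, y) = \sum_{k \ge 0} a_k(x) y^{2k}$, the equation $\Delta_b P = 0$ yields the recursion $a_{k + 1} = -\Delta_x a_k / [2(k + 1)(2k + 1 + b)]$, which is well-defined since $b > -1$. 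Hence $a_0 \equiv 0$ forces $P \equiv 0$, and consequently $\tilde{u}_\mu^{X_0} \equiv \tilde{v}_\mu^{X_0} \equiv 0$, contradicting the non-triviality assertion of Theorem~\ref{BU}.

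For the stratification, for each $X_0 = (x_0, 0) \in S_\mu(u)$ I define the polynomial jet $\hat{p}_{X_0}(x) \coloneqq \tilde{u}_\mu^{X_0}(x - x_0, 0)$, which is $\mu$-homogeneous in $x - x_0$, and set $\hat{f}_\alpha(X_0) \coloneqq \partial_x^\alpha \hat{p}_{X_0}(x_0)$ for $\alpha \in \mathbb{N}_0^n$ with $|\alpha| \le \mu$; by homogeneity $\hat{f}_\alpha \equiv 0$ for $|\alpha| < \mu$. The central step is verifying Whitney's compatibility of order $\mu$: for every compact $K \subset S_\mu(u)$,
\[
R_\alpha(X, X_0) \coloneqq \hat{f}_\alpha(X) - \sum_{|\gamma| \le \mu - |\alpha|} \frac{1}{\gamma!} \hat{f}_{\alpha + \gamma}(X_0)(x - x_0)^\gamma = o(|X - X_0|^{\mu - |\alpha|})
\]
uniformly for $X, X_0 \in K$. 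Since $\hat{f}_\beta(X_0) = 0$ whenever $|\beta| < \mu$, Taylor's formula yields the clean identity $R_\alpha(X, X_0) = \partial_x^\alpha(\hat{p}_X - \hat{p}_{X_0})(x)$. To bound the remainder, I apply the last estimate in Proposition~\ref{mu-diff} at both $X$ and $X_0$ to obtain, on the thin space,
\[
|\hat{p}_X(x') - \hat{p}_{X_0}(x')| \le \sigma_K(|x' - x_0|)|x' - x_0|^\mu + \sigma_K(|x' - x|)|x' - x|^\mu.
\]
Setting $\rho \coloneqq |X - X_0|$ and restricting $x'$ to the ball of radius $\rho$ around $x_0$, the right-hand side is $o(\rho^\mu)$. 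Since $\hat{p}_X - \hat{p}_{X_0}$ is a polynomial in $x$ of degree $\leq \mu$, the rescaling $Z \coloneqq (x' - x_0)/\rho$ produces a polynomial on the unit ball with $L^\infty$-norm $o(1)$; the equivalence of norms on the finite-dimensional space of polynomials of degree $\leq \mu$ then yields $|\partial_x^\alpha (\hat{p}_X - \hat{p}_{X_0})(x')| = o(\rho^{\mu - |\alpha|})$ uniformly for $x'$ in the same ball. Evaluating at $x' = x$ proves the claim.

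By Whitney's extension theorem there exist $C^\mu$ functions $\{F_\alpha\}_{|\alpha| \le \mu}$ on a neighborhood of $S_\mu(u)$ in the thin space with $\partial^\beta F_\alpha(X_0) = \hat{f}_{\alpha + \beta}(X_0)$ for $X_0 \in S_\mu(u)$ and $|\alpha + \beta| \le \mu$; in particular $F_\alpha \equiv 0$ on $S_\mu(u)$ whenever $|\alpha| < \mu$. Fix $X_0 \in S_\mu^d(u)$ and, without loss of generality, assume $d_{u, \mu}^{X_0} = d$ (the case $d_{v, \mu}^{X_0} = d$ is handled analogously, working with the polynomial $\tilde{v}_\mu^{X_0}$). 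A direct computation gives $\xi \cdot \nabla F_\alpha(X_0) = \partial_x^\alpha(\xi \cdot \nabla_x' \hat{p}_{X_0})(x_0)$ for $|\alpha| = \mu - 1$; since $\xi \cdot \nabla_x' \hat{p}_{X_0}$ is a $(\mu - 1)$-homogeneous polynomial determined by its $(\mu - 1)$-th derivatives at $x_0$, it vanishes identically precisely when $\xi \in V_u(X_0)$. Hence the kernel of the linear map $\xi \mapsto (\xi \cdot \nabla F_\alpha(X_0))_{|\alpha| = \mu - 1}$ equals $V_u(X_0)$, which has dimension $d$. One may therefore select $n - d$ multi-indices $\alpha^{(1)}, \dots, \alpha^{(n-d)}$ with $|\alpha^{(i)}| = \mu - 1$ such that $\{\nabla F_{\alpha^{(i)}}(X_0)\}$ are linearly independent; by the implicit function theorem, the common zero set of $F_{\alpha^{(1)}}, \dots, F_{\alpha^{(n-d)}}$ is, in a neighborhood of $X_0$, a $C^1$ manifold of dimension $d$ that contains $S_\mu(u)$, and hence $S_\mu^d(u)$, locally. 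A countable covering of $S_\mu^d(u)$, which exists by its separability together with the $F_\sigma$ structure from Lemma~\ref{Fsig}, then yields the stated containment in a countable union of $d$-dimensional $C^1$ manifolds.

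The main obstacle is the verification of the Whitney compatibility condition, where one must convert the $C^0$ approximation modulus from Proposition~\ref{mu-diff} into a uniform modulus on all derivatives of the polynomial difference $\hat{p}_X - \hat{p}_{X_0}$; this is precisely where the rescaling argument combined with finite-dimensional norm equivalence enters.
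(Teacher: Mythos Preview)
Your proof is correct and follows the same Whitney-extension stratification scheme that the paper adopts; the paper's own proof is very terse, simply invoking Proposition~\ref{mu-diff} and then deferring to Lemma~2.1.2 and Theorem~1.3.8 in \cite{MR2511747}, while you have spelled out the details of those cited arguments explicitly (the recursion for $b$-harmonic even polynomials, the Whitney compatibility via norm equivalence on polynomial spaces, and the rank computation for the implicit function theorem). One minor slip: Whitney's theorem produces a single $C^\mu$ function $F$ with $\partial^\alpha F = \hat{f}_\alpha$ on $S_\mu(u)$, so your functions $F_\alpha \coloneqq \partial^\alpha F$ are $C^{\mu - |\alpha|}$ rather than $C^\mu$; this is harmless since you only need $F_\alpha \in C^1$ for $|\alpha| = \mu - 1$.
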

\begin{proof}
Observe that since $\mu \in \NN$, we must have that $u \equiv v \equiv 0$ on $S_{\mu}(u)$. We begin by proving that $d_{\mu}^{X_0} \le n - 1$. To this end, arguing by contradiction, assume that $d_{\mu}^{X_0} = n$. In particular, this assumption implies that $\nabla_x' \tilde{u}_{\mu}^{X_0}$ and $\nabla_x' \tilde{v}_{\mu}^{X_0}$ vanish identically on $\RR^n \times \{0\}$. Consequently, the $\mathcal{L}_b$-harmonic functions $\tilde{u}_{\mu}^{X_0}$ and $\tilde{v}_{\mu}^{X_0}$ must also vanish identically on $\RR^n \times \{0\}$. Applying Proposition 2.2 in \cite{MR3268922}, we deduce that $\tilde{u}_{\mu}^{X_0}$ and $\tilde{v}_{\mu}^{X_0}$ are identically zero in $\RR^{n + 1}$, which contradicts \Cref{HBUL}.

Next, notice that by \Cref{mu-diff} we have that 
\begin{align*}
|\tilde{u}_{\mu}^{X_0}(X - X_0)| & \le \sigma(|X - X_0|)|X - X_0|^{\mu}, \\
|\tilde{v}_{\mu}^{X_0}(X - X_0)| & \le \sigma(|X - X_0|)|X - X_0|^{\mu} 
\end{align*}
for all $X, X_0 \in S_{\mu}(u)$. Assume first that $d_{\mu}^{X_0} = d_{u, \mu}^{X_0}$ and observe that, if this is the case, then $\tilde{u}_{\mu}^{X_0}$ is necessarily non-trivial. The desired result then readily follows by arguing as in the proof of Theorem 1.3.8 in \cite{MR2511747}. On the other hand, if $d_{\mu}^{X_0} = d_{v, \mu}^{X_0}$, then can repeat the same argument for the non-trivial function $\tilde{v}_{\mu}^{X_0}$. This concludes the proof.
\end{proof}

\begin{rmk}
We conclude the section with remarks and open problems.
\begin{itemize}
\item[$(i)$] It is worth noting that the structure of the set $S_1(u)$ for the fourth-order problem under consideration may differ significantly from its second-order counterpart, where points with frequency $\mu = 1$ are all regular. Indeed, in our case, we must still consider the scenario where $\mu = 1$ and $\nabla_x' u(X_0) = 0$, thereby proving that $\tilde{u}_{\mu}^{X_0} \equiv 0$. This, in turn, implies that $\tilde{v}_{\mu}^{X_0}$ cannot vanish identically. We notice also that $\tilde{v}_{\mu}^{X_0}$ is independent of $y$ as a consequence of the Neumann boundary condition on $B_1'$ when $b \le 0$.
\item[$(ii)$] In view of \Cref{GPthm}, it remains to analyze the structure of the singular sets $S_0(u)$ and $S_{\infty}(u)$. Observe that if $\mu = 0$, we have $d_{\mu}^{X_0} = n$. Therefore, $S_0(u) = S_0^n(u)$ is trivially contained in a manifold of dimension $n$. Therefore, while \Cref{GPthm} in a sense continues to hold also for $\mu = 0$, it does not provide any valuable information on the structure of $S_0(u)$. 
Similarly, whether the value $\mu = \infty$ is admissible remains unclear, and the analysis of $S_{\infty}(u)$ is entirely open, likely requiring very different techniques.
\end{itemize}
\end{rmk}

\section{Conflict of interest and data availability statements}
On behalf of all authors, the corresponding author declares that there is no conflict of interest. This manuscript has no associated data.

\section*{Acknowledgements}
We would like to express our sincere gratitude to the anonymous reviewer whose remarks and suggestions substantially improved the quality of this manuscript.

\bibliographystyle{siam}
\bibliography{thin-bL}
\end{document}